\newcommand{\me} {1/2}
\newcommand{\mme} {-1/2}
\newcommand{\tme} {3/2}
\newcommand{\nuA} {{\nu_A}}
\newcommand{\eps}{\varepsilon}
\newcommand{\vphi}{\varphi}
\newtheorem{theorem}{Theorem}[section]
\newtheorem{lemma}[theorem]{Lemma}
\newtheorem{corollary}[theorem]{Corollary}
\newtheorem{remark}[theorem]{Remark}
\def\N{{\mathord{\rm I\mkern-3.6mu N}}}
\def\R{{\mathord{\rm I\mkern-3.6mu R}}}
\def\Z{{\mathord{\rm Z\mkern-5.6mu Z}}}
\title{AN ITERATIVE METHOD FOR SOLVING ELLIPTIC CAUCHY PROBLEMS}
\author{A. Leit\~ao \\
Department of Mathematics \\ Federal University of Santa Catarina \\
P.O. Box 476, 88010-970 Florian\'opolis, Brazil }
\date{}
\begin{document}

\maketitle

\begin{abstract}

        We investigate the Cauchy problem for elliptic operators with 
$C^\infty$--coefficients at a regular set $\Omega \subset \R^2$, which is 
a classical example of an ill-posed problem. The Cauchy data are given at 
the subset $\Gamma \subset \partial\Omega$ and our objective is to 
reconstruct the trace of the $H^1(\Omega)$ solution of an elliptic equation 
at $\partial \Omega / \Gamma$. The method described here is a generalization 
of the algorithm developed by Maz'ya et al. [Ma] for the Laplace operator, 
who proposed a method based on solving successive well-posed mixed boundary 
value problems (BVP) using the given Cauchy data as part of the boundary data. 
We give an alternative convergence proof for the algorithm in the case we 
have a linear elliptic operator with $C^\infty$--coefficients. We also present 
some numerical experiments for a special non linear problem and the obtained 
results are very promisive.
\end{abstract}

\pagestyle{plain}

\section{Introduction}

\subsection{Main results}

        The algorithm of Maz'ia et al. [Ma] is formulated here for general 
elliptic operators. A new convergence proof for this iterative algorithm 
using a functional analytical approach is given in section~2.1 (see Theorem~%
\ref{Tk_konverg}), where we describe the iteration using powers of an affine 
operator $T$. The key of the proof is to define an alternative topology 
(see Lemma~\ref{neue_norm}) for the space $H^{\me}_{00}(\Gamma)'$ -- where 
the iteration is considered -- and to prove that the linear part of $T$ 
satisfies special properties (see Theorem~\ref{mazya}). The converse of 
Theorem~\ref{Tk_konverg} is also proved, i.e. if the iteration converges, 
it's limit is the solution of the Cauchy Problem.

        Some properties of $T_l$ (the linear part of $T$) such as 
positiveness, self adjointness and injectivity are verified in section~2.1 
(see Theorem~\ref{Tl_eigenschaft}). In section~2.2 we prove a spectral 
property of $T_l$, that is attached to the ill-posedness of the elliptic 
Cauchy Problem.

        In section~2.3 we analyze the convergence speed of the iteration 
for the special case when the spectral decomposition of $T_l$ is known. 
The effectiveness of two regularization schemas based on the spectral 
decomposition of $T_l$ (the linear part of $T$) is considered in section~2.4.

        In section~3 some numerical experiments are presented, where we test 
the algorithm performance for linear consistent, linear inconsistent and non 
linear Cauchy problems.

        An analysis of iterative method in the special case of a square region 
can be found in [JoNa]. The idea of this method is also applied to hyperbolic 
operators in [Bas] that uses semi-group theory in his approach.

\subsection{About Cauchy problems}

        Let $\Omega \subset \R^2$ be an open, bounded and simply connected 
set. As an {\em elliptic Cauchy problem} at $\Omega$ we consider an (time 
independent) initial value problem for an elliptic differential operator 
defined over $\Omega$, where the initial data is given at the manifold 
$\Gamma \subset \partial \Omega$. 

        The problem we analyze is to evaluate the trace of the solution of 
such an initial value problem at the part of the boundary where no data 
was prescribed, actually at $\partial\Omega \backslash \Gamma$. As a 
solution of our Cauchy problem we consider a $H^1(\Omega)$--distribution, 
which solves the weak formulation of the elliptic equation in $\Omega$ 
and also satisfies the Cauchy data at $\Gamma$ in the sense of the trace 
operator.

        It's well known that elliptic Cauchy problems are ill--posed. 
According to the definition of Hadamard an initial value problem (IVP) or a 
BVP is said to be well--posed, when the following three conditions are 
satisfied:%
\footnote{More details in [Bau] or [Lo].}
existence and unicity of solutions, and continuous dependence of the data. 
The next example was encountered by Hadamard himself [Had] and shows that 
the solution of an elliptic Cauchy problem may not depend continuously of 
the initial data. One analyzes the family of problems:
$$   \left\{  \begin{array}{rl}
         \Delta u_k \ = \ 0\, , & (x,y)\in \Omega=(0,1)\times(0,1) \\
         u_k(x,0)   \ = \ 0\, , & x \in (0,1) \\
         \frac{\partial}{\partial y} u_k(x,0) \ = \ \vphi_k\, , & x\in (0,1)
     \end{array} \right.  $$
where $\vphi_k(x) = (\pi k)^{-1} sin(\pi k x)$. The respective solutions
$$           u_k(x,y)\ =\ (\pi k)^{-2} sinh(\pi k y)\ sin(\pi k x)          $$
do exist for every $k\in\N$ and they are unique. The sequence $\{\vphi_k\}$ 
converges uniformly to zero. Taking the limit $k \to \infty$ we have a Cauchy 
problem with homogeneous data, which admits only the trivial solution. But 
for every fixed $y>0$ the solutions $u_k$ oscillate stronger and stronger and 
become unbounded as $k \to \infty$. Consequently the sequence $u_k$ does not 
converge to zero in any reasonable topology.

        If in this example one takes for Cauchy data the $C^\infty$--functions 
$(f,g)$ instead of $(0,\vphi_k)$, it is possible to show (see [GiTr]) that if 
$f \equiv 0$, then $g$ must be analytical. This means that a {\em classical 
solution} may not exist, even if one uses smooth functions as Cauchy data.

        The unique well-posedness condition that is satisfied for this 
problem is the second one. With adequate arguments it is possible to extend 
the Cauchy--Kowalewsky and Holmgren Theorem to the $H^1$--context in 
order to guarantee uniqueness of solutions also in weak sense (see 
Theorem~\ref{schwach_eindeut}).

\subsection{Description of the algorithm}

        Let $\Omega$ be an open set in $\R^2$ with smooth boundary 
$\partial\Omega$, which is divided in two open and connected components: 
$\Gamma_1$ and $\Gamma_2$, such that $\Gamma_1\cap\Gamma_2 = \emptyset$ and 
$\overline{\Gamma_1\cup\Gamma_2} = \partial\Omega$. Let $P$ be the second 
order elliptic differential operator defined by:
\begin{equation} \label{L-definition}
     P(u) \ := \ - \sum_{i,j=1}^{2}\,{D_i (a_{i,j} D_j u)}
\end{equation}
\noindent  where the real functions $a_{i,j}$ satisfy
\begin{equation}
\left\{ \begin{array}{l}
        -\ a_{i,j} \in L^\infty(\Omega); \\
        -\ \mbox{the matrix } A(x) := (a_{i,j})_{i,j=1}^2
        \mbox{ satisfies: } \xi^t A(x)\, \xi > \alpha ||\xi||^2, \\
        \mbox{ \ \ \ a.e. } x \in \Omega,\ \forall \xi \in\R^2 \mbox{ where }
        \alpha>0 \mbox{ is given (independent of } x).
\end{array}\right.
\label{coef_beding}
\end{equation}
        Given the Cauchy data $(f,g) \in H^{\me}(\Gamma_1) \times 
H^{\me}_{00}(\Gamma_1)'$, we search for a $H^1$--solution of the problem%
\footnote{Details about the notation can be found in Appendix~A.}

\medskip \noindent
$   (CP)     \hfill \left\{  \begin{array}{rl}
                          P u = 0 & ,\ \mbox{in}\ \Omega    \\
                          u = f          & ,\ \mbox{at}\ \Gamma_1 \\
                          u_\nuA = g    & ,\ \mbox{at}\ \Gamma_1
             \end{array} \right. .  \hfill              $
\medskip

        Our objective is to reconstruct the trace of the solution $u$ and 
it's conormal derivative at $\Gamma_2$.%
\footnote{Note that if we knew the conormal derivative at $\Gamma_2$, $u$ 
could be evaluated as the solution of a mixed BVP.}
Given the approximation $\varphi_0 \in H^{\me}_{00}(\Gamma_2)'$ for 
$u_{\nuA|_{\Gamma_2}}$, we define the sequence $\{ \vphi_k \}_{k\in\N}$ 
using the following iteration rule:

\medskip \noindent
$  (IT)  \hfill \left\{ \begin{array}{lccc}
                w \in H^1(\Omega)\ \mbox{solve:} & P w = 0; &
                w_{|_{\Gamma_1}} = f; & w_{\nuA|_{\Gamma_2}} = \vphi_k; \\
                \psi_k := w_{|_{\Gamma_2}}; & & & \\
                 & & & \\
                v \in H^1(\Omega)\ \mbox{solve:} & P v = 0; &
                v_{\nuA|_{\Gamma_1}} = g; & v_{|_{\Gamma_2}} = \psi_k; \\
                \vphi_{k+1} := v_{\nuA|_{\Gamma_2}}. & & & 
         \end{array} \right.   \hfill $
\medskip

\noindent  In (IT) two differential equations are solved and two trace 
operators are applied. Actually we generate two sequences: the first one of 
Dirichlet traces and the second one of Neumann traces, both defined at 
$\Gamma_2$. As the functions $w$ and $v$ are both in $H^1(\Omega;P)$, one 
concludes from Theorems~\ref{spursatz_rand_st} and \ref{spur_H_om_P_rand_st} 
respectively that $\{ \vphi_k \} \subset H^{\me}_{00}(\Gamma_2)'$ and 
$\{ \psi_k \} \subset H^{\me}(\Gamma_2)$.

\begin{remark}  If the Neumann data $g$ of (CP) is a $H^{\mme}(\Gamma_2)$--%
distribution, one proves using the Theorems of Appendix~C that the 
sequence $\{ \vphi_k \}$ can be defined on the Sobolev space 
$H^{\mme}(\Gamma_2)$.
\end{remark}

\begin{remark}  If one supposes $\partial\Omega = \Gamma_1 \cup \Gamma_2 \cup 
\Gamma_3$ and wants to analyze a Cauchy problem with data given at $\Gamma_1$ 
plus a further boundary condition (Neumann, Dirichlet, $\dots$) at $\Gamma_3$, 
it is possible to adapt the iteration by adding this boundary condition at 
$\Gamma_3$ to both BVP in (IT). This over--determination of boundary data 
does not affect the analysis of the algorithm.
\end{remark}

\subsection{Functional--analytical approach}

        The main objective in this section is to represent the iteration (IT) 
using an operator $T: H^{\me}_{00}(\Gamma_2)' \to H^{\me}_{00}(\Gamma_2)'$. 
We define the operators $L_n : H^{\me}_{00}(\Gamma_2)' \to H^1(\Omega)$ and 
$L_d : H^{\me}(\Gamma_2) \to H^1(\Omega)$ by:
$$    L_n (\varphi) := w \in H^1(\Omega)\ \ \ \ \ \ and\ \ \ \ \ \
                                        L_d (\psi) := v \in H^1(\Omega),    $$
\noindent  where the functions $w$ and $v$ are respectively solutions of the 
BVP's 

\medskip \noindent
$  \hfill \left. \begin{array}{ccc}
             P w = 0\ \mbox{\ in}\ \Omega;   & w_{|_{\Gamma_1}} = f; &
             w_{\nuA|_{\Gamma_2}} = \varphi \\
             {\hskip-8.3cm \mbox{and}} & & \\
             P v = 0\ \mbox{\ in}\ \Omega;   & v_{\nuA|_{\Gamma_1}}=g; &
             v_{|_{\Gamma_2}} = \psi
   \end{array} \right. \hfill       $
\medskip

\noindent  With the aid of the Neumann trace operator $\gamma_n: 
H^1(\Omega, P) \rightarrow H^{\me}_{00} (\Gamma_2)'$, $\gamma_n(u) := 
u_{\nuA|_{\Gamma_2}}$ and the Dirichlet trace operator $\gamma_d : H^1(\Omega) 
\rightarrow H^{\me}(\Gamma_2)$, $\gamma_d(u) := u_{|_{\Gamma_2}}$ one can 
rewrite (IT) as
\begin{equation} 
        \left\{ \begin{array}{cc}
        w \ = \ L_n (\varphi_k);   & \psi_k \ = \ \gamma_d(w) \\
        v \ = \ L_d (\psi_k);   & \varphi_{k+1} \ = \ \gamma_n(v)
        \end{array} \right. 
\label{T_schritt} \end{equation}
\noindent   If we define $T := \gamma_n \circ L_d \circ \gamma_d 
\circ L_n$, we conclude immediately that $T$ is an affine operator on 
$H^{\me}_{00}(\Gamma_r)'$, which satisfies
$$            \vphi_{k+1} \ = \ T(\vphi_k) \ = \ T^{k+1}(\vphi_0).          $$
\noindent  That means we are able to describe the iteration (IT) with 
powers of the operator $T$. As $L_n$ and $L_d$ are both affine, we can write
$$  L_n(\cdot) \ = \ L_n^l(\cdot) \ + \ w_f \hskip1cm \mbox{and} \hskip1cm
    L_d(\cdot) \ = \ L_d^l(\cdot) \ + \ v_g,
$$
\noindent  where the $H^1(\Omega, P)$--functions $w_f$ and $v_g$ depend 
only of $f$ and $g$ respectively. With these definitions we have
\begin{eqnarray}
\vphi_{k+1} \ = \ T(\vphi_k) & = &
      \underbrace{\gamma_n \circ L_d^l \circ \gamma_d \circ
        L_n^l(\vphi_k)}_{T_l(\vphi_k)} \ + \
      \underbrace{\gamma_n \circ L_d^l  \circ \gamma_d(w_f) + 
        \gamma_n(v_g)}_{z_{f,g}}                           \label{Tl_def} \\
   &   & \nonumber \\
   & = & T_l^{k+1}(\vphi_0) \ + \ \sum_{j=0}^k{T_l^j(z_{f,g})}.    \nonumber
\end{eqnarray}
\begin{remark}  If we set $\overline{\vphi} = \gamma_n u$, where $u$ is the 
solution of (CP), it follows from (IT) that $T\, \overline{\vphi} = 
\overline{\vphi}$. Conversely, if $\overline{\vphi}$ is a fixed point of the 
operator $T$, the functions $w$ and $v$ in (IT) have the same traces at 
$\Gamma_2$. From the uniqueness Theorem~\ref{schwach_eindeut} follows 
$w = v$ and they are both solutions of (CP).
\label{fix_punkt_bemerk}  \end{remark}

\section{Analysis of the method}

\subsection{Convergence proof}

        In order to study the iterative method proposed in section 1.3, we 
begin with equipping the space $H^{\me}_{00}(\Gamma_2)'$ with a new topology.

\begin{lemma}  Let the coefficients of $P$ satisfy the conditions in 
(\ref{coef_beding}). The functional 
$$  ||\vphi||_* := \left( \int_\Omega{ (\nabla L_n^l (\vphi))^t  A \, 
                         (\nabla L_n^l (\vphi) ) \, dx} \right)^{\me}   $$
defines on $H^{\me}_{00}(\Gamma_2)'$ a norm, that is equivalent to the usual 
Sobolev norm of this space.
\label{neue_norm} \end{lemma}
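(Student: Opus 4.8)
The plan is to show that $\|\cdot\|_*$ is the norm induced by an inner product on $H^{\me}_{00}(\Gamma_2)'$, and then to establish the two-sided estimate against the Sobolev norm. First I would observe that $L_n^l$ is linear and bounded from $H^{\me}_{00}(\Gamma_2)'$ into $H^1(\Omega)$: this follows from the well-posedness of the mixed BVP with data $w_{|_{\Gamma_1}}=0$, $w_{\nuA|_{\Gamma_2}}=\vphi$, since the coefficient conditions (\ref{coef_beding}) give coercivity of the associated bilinear form $a(u,v)=\int_\Omega (\nabla u)^t A\,\nabla v\,dx$ on the subspace of $H^1(\Omega)$ vanishing on $\Gamma_1$, so Lax--Milgram applies and yields $\|L_n^l(\vphi)\|_{H^1(\Omega)} \le C\,\|\vphi\|_{H^{\me}_{00}(\Gamma_2)'}$. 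Consequently $\|\vphi\|_* = a(L_n^l\vphi,L_n^l\vphi)^{\me} \le \|A\|_\infty^{\me}\,\|\nabla L_n^l\vphi\|_{L^2} \le C'\,\|\vphi\|_{H^{\me}_{00}(\Gamma_2)'}$, which is the easy inequality. That $\|\cdot\|_*$ comes from the bilinear form $\langle \vphi,\chi\rangle_* := a(L_n^l\vphi, L_n^l\chi)$ is immediate once $L_n^l$ is linear, and the uniform ellipticity makes it positive semidefinite.

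The substantive point is the reverse inequality $\|\vphi\|_{H^{\me}_{00}(\Gamma_2)'} \le C\,\|\vphi\|_*$, which simultaneously gives definiteness (if $\|\vphi\|_*=0$ then $\vphi=0$). For definiteness alone: if $\|\vphi\|_*=0$, uniform ellipticity forces $\nabla L_n^l\vphi = 0$ a.e., so $w:=L_n^l\vphi$ is constant on the connected set $\Omega$; since $w$ vanishes on $\Gamma_1$ (a set of positive measure in $\partial\Omega$), $w\equiv 0$, whence $\vphi = w_{\nuA|_{\Gamma_2}} = 0$. For the quantitative bound I would argue by recovering $\vphi$ from $w=L_n^l\vphi$: since $Pw=0$ weakly and $w\in H^1(\Omega;P)$, the conormal trace $\gamma_n w = \vphi$ is controlled by the graph norm of $w$ in $H^1(\Omega;P)$, i.e. $\|\vphi\|_{H^{\me}_{00}(\Gamma_2)'} \le C\,(\|w\|_{H^1(\Omega)} + \|Pw\|_{L^2(\Omega)}) = C\,\|w\|_{H^1(\Omega)}$, invoking the trace theorem for $H^1(\Omega;P)$ (the result quoted in the excerpt as Theorem~\ref{spursatz_rand_st}). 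It then remains to bound $\|w\|_{H^1(\Omega)}$ by $\|\nabla w\|_{L^2(\Omega)}$, i.e. to absorb the $L^2$ part of the norm; this is a Poincaré-type inequality valid because $w$ vanishes on $\Gamma_1$, so $\|w\|_{L^2(\Omega)} \le C\,\|\nabla w\|_{L^2(\Omega)}$. Finally $\|\nabla w\|_{L^2}^2 \le \alpha^{-1} a(w,w) = \alpha^{-1}\|\vphi\|_*^2$ by the lower ellipticity bound, closing the chain.

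Assembling: $\|\vphi\|_{H^{\me}_{00}(\Gamma_2)'} \le C\|w\|_{H^1} \le C'\|\nabla w\|_{L^2} \le C''\|\vphi\|_*$, and together with the easy direction this proves equivalence. The main obstacle is the quantitative recovery step $\|\vphi\|_{H^{\me}_{00}(\Gamma_2)'} \le C\,\|L_n^l\vphi\|_{H^1(\Omega)}$: one must be careful that the conormal-trace/Green's-formula estimate is applied in the correct dual space $H^{\me}_{00}(\Gamma_2)'$ rather than $H^{\mme}(\partial\Omega)$, and that the boundedness of $\gamma_n$ on $H^1(\Omega;P)$ quoted earlier is exactly what licenses this — this is where the appendix trace theorems do the real work. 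The Poincaré inequality and the ellipticity bounds are routine; the only genuine analytic input beyond Lax--Milgram is the mapping property of the conormal trace.
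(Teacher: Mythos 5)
Your argument is correct and follows essentially the same route as the paper: the forward inequality via Lax--Milgram/continuous dependence for the mixed BVP together with the upper ellipticity bound, and the reverse inequality by controlling the conormal trace of $w=L_n^l\vphi$ through the graph norm in $H^1(\Omega;P)$ (with $Pw=0$) and then absorbing $\|w\|_{L^2}$ via the Poincar\'e inequality on $H^1_0(\Omega\cup\Gamma_2)$ and the lower ellipticity bound. The only slip is a citation: the trace estimate doing the real work is Theorem~\ref{spur_H_om_P_rand_st} (Neumann trace of $H^1(\Omega;P)$ into $H^{\me}_{00}(\Gamma_2)'$), not Theorem~\ref{spursatz_rand_st}, which is the Dirichlet trace theorem.
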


\begin{proof}
        Given $\vphi \in H^{\me}_{00}(\Gamma_2)'$, the function $u = 
L_n^l(\vphi)$ solves following BVP:
$$  \left\{ \begin{array}{ccl}
            P \, u     & = & 0\ , \ \mbox{ in } \Omega \\
            u          & = & 0\ , \ \mbox{ at } \Gamma_1 \\
            u_{\nu_A}  & = & \vphi\ ,\ \mbox{ at } \Gamma_2
            \end{array} \right.                                             $$
\noindent  From Theorem~\ref{ex_eind_gemischt} one concludes $L_n^l(\vphi)$ is 
the unique solution in $H^1_0(\Omega\cup\Gamma_2)$. In the same theorem the 
continuous dependence of the data is proved, and from this follows
$$  ||\vphi||_*\ \leq\ c_1 \ ||L_n^l(\vphi)||_{H^1(\Omega)}\ \leq\
                            c_2 \ ||\vphi||_{H^{\me}_{00}(\Gamma_2)'} \, ,  $$
\noindent  where the first inequality follows from the norm equivalence 
between $||\cdot||_{H^1 (\Omega)}$ \ and \ $\langle A\nabla\cdot, 
\nabla\cdot \rangle_{L^2(\Omega)}^{\me}$ at $H^1_0(\Omega\cup\Gamma_2)$.

        The opposite inequality follows from the continuity of the Neumann 
trace operator in Theorem~\ref{spur_H_om_P_rand_st} and the norm equivalence 
used just above.
\qquad\end{proof}

\begin{remark}  Actually one can prove that the norm $||\vphi||_*$ is 
defined by an inner product and the space $H^{\me}_{00}(\Gamma_2)'$ is a 
Hilbert space with the inner product
$$  \langle \vphi, \psi \rangle_* \ = \ \int_{\Omega}
        { ( \nabla L_n^l(\vphi) )^t  A \, ( \nabla L_n^l(\psi) )\ dx} \, .  $$
\end{remark}

        In the next theorem we investigate some properties of the operator 
$T_l$, defined in section 1.4, when we equip the space $H^{\me}_{00} 
(\Gamma_2)'$ with the Hilbert space structure defined by $\langle \cdot, 
\cdot \rangle_*$.

\begin{theorem}  Let $T_l \in {\cal L}(H^{\me}_{00}(\Gamma_2)')$ be the 
operator defined in (\ref{Tl_def}). The following assertions hold:
\begin{enumerate} \renewcommand{\labelenumi}{\roman{enumi})}
  \item  $T_l$ is positive;
  \item  1 is not an eigenvalue of $T_l$;
  \item  $T_l$ is self adjoint;
  \item  $T_l$ is injective.
\end{enumerate}
\label{Tl_eigenschaft} \end{theorem}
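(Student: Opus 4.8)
\noindent\textit{Proof strategy.} The plan is to reduce all four assertions to a single symmetric expression for the bilinear form $\langle T_l\,\cdot\,,\,\cdot\,\rangle_*$. For $\varphi\in H^{\me}_{00}(\Gamma_2)'$ put $u:=L_n^l(\varphi)$ and $v:=L_d^l(\gamma_d u)$, so that $T_l\varphi=\gamma_n(v)$; recall $u|_{\Gamma_1}=0$, $(u)_{\nuA}|_{\Gamma_2}=\varphi$, $v|_{\Gamma_2}=\gamma_d u$, and that the conormal derivative of $v$ on $\Gamma_1$ vanishes (these come from the defining BVP's of $L_n^l,L_d^l$, cf. the proof of Lemma~\ref{neue_norm}). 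The basic tool is the generalized Green formula: for $\eta\in H^1(\Omega)$ and $z\in H^1(\Omega,P)$ with $Pz=0$, $\int_\Omega (\nabla\eta)^t A\,(\nabla z)\,dx = \langle z_{\nuA},\gamma\eta\rangle_{\partial\Omega} = \langle z_{\nuA}|_{\Gamma_1},\eta|_{\Gamma_1}\rangle_{\Gamma_1}+\langle z_{\nuA}|_{\Gamma_2},\eta|_{\Gamma_2}\rangle_{\Gamma_2}$. Applying this with $z=L_n^l(\varphi_1)$, $\eta=L_n^l(\varphi_2)$, using that both functions vanish on $\Gamma_1$ (so that $\gamma_d L_n^l(\varphi_i)\in H^{\me}_{00}(\Gamma_2)$ and every pairing written below is legitimate), and the symmetry of $A$, one obtains the identity $\langle\varphi_1,\varphi_2\rangle_* = \langle\varphi_1,\gamma_d L_n^l(\varphi_2)\rangle_{\Gamma_2}$.

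Next I would compute $\langle T_l\varphi_1,\varphi_2\rangle_*$. Write $u_i:=L_n^l(\varphi_i)$, $v_i:=L_d^l(\gamma_d u_i)$. Using the identity just obtained with $\varphi_1$ replaced by $T_l\varphi_1$ gives $\langle T_l\varphi_1,\varphi_2\rangle_* = \langle T_l\varphi_1,\gamma_d u_2\rangle_{\Gamma_2} = \langle (v_1)_{\nuA}|_{\Gamma_2},\,u_2|_{\Gamma_2}\rangle_{\Gamma_2}$. Applying Green's formula with $z=v_1$, $\eta=u_2$, and using that the conormal derivative of $v_1$ on $\Gamma_1$ vanishes, the $\Gamma_1$-term drops and $\langle T_l\varphi_1,\varphi_2\rangle_* = \int_\Omega (\nabla u_2)^t A\,(\nabla v_1)\,dx$. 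Finally decompose $u_2=v_2+(u_2-v_2)$: the difference satisfies $P(u_2-v_2)=0$ and $(u_2-v_2)|_{\Gamma_2}=0$, so Green's formula with $z=v_1$, $\eta=u_2-v_2$ makes the cross term vanish (its $\Gamma_1$-part dies because the conormal derivative of $v_1$ vanishes there, its $\Gamma_2$-part because $(u_2-v_2)|_{\Gamma_2}=0$). Together with the symmetry of $A$ this leaves the key identity $\langle T_l\varphi_1,\varphi_2\rangle_* = \int_\Omega (\nabla v_1)^t A\,(\nabla v_2)\,dx$.

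From the key identity the theorem follows. The right-hand side is symmetric in $1\leftrightarrow2$, which gives self adjointness (iii). Taking $\varphi_1=\varphi_2=\varphi$ and invoking the uniform ellipticity in (\ref{coef_beding}) gives $\langle T_l\varphi,\varphi\rangle_* = \int_\Omega (\nabla v)^t A\,(\nabla v)\,dx \ge \alpha\|\nabla v\|_{L^2(\Omega)}^2 \ge 0$, i.e.\ positivity (i). For injectivity (iv): if $T_l\varphi=0$ then $\langle T_l\varphi,\varphi\rangle_*=0$, hence $\nabla v\equiv0$ and $v$ is constant on $\Omega$; then $\gamma_d L_n^l(\varphi)=v|_{\Gamma_2}$ is a constant belonging to $H^{\me}_{00}(\Gamma_2)$, so it must vanish; therefore $L_n^l(\varphi)\in H^1_0(\Omega)$ solves $P(\cdot)=0$, which forces $L_n^l(\varphi)=0$ by coercivity, and so $\varphi=\gamma_n(L_n^l\varphi)=0$. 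For (ii): a solution of $T_l\bar\varphi=\bar\varphi$ is exactly the fixed-point situation of Remark~\ref{fix_punkt_bemerk} with $f=g=0$ (then $z_{f,g}=0$ and $T=T_l$); by that remark the corresponding $w=L_n^l(\bar\varphi)$ and $v$ coincide and solve $(CP)$ with homogeneous Cauchy data, whence $w=v\equiv0$ by the uniqueness Theorem~\ref{schwach_eindeut}, and thus $\bar\varphi=\gamma_n(w)=0$.

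I expect the main obstacle to be the second paragraph: carefully bookkeeping which boundary terms survive each application of Green's formula, and in particular justifying that the Dirichlet traces on $\Gamma_2$ of the $L_n^l$-functions actually lie in $H^{\me}_{00}(\Gamma_2)$, so that every duality pairing written down is well defined. Once the key identity is in place, parts (i)--(iv) are essentially immediate.
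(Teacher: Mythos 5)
Your proposal is correct and follows essentially the same route as the paper: your ``key identity'' $\langle T_l\varphi_1,\varphi_2\rangle_* = \int_\Omega (\nabla W(\varphi_1))^t A\,\nabla W(\varphi_2)\,dx$ is exactly what the paper extracts from Green's formula and Theorem~\ref{H_me00_eigenschaft} in identities (\ref{Jour1})--(\ref{Jour2}) and their analogues, yielding positivity and self adjointness in the same way. Your treatments of (ii) via Remark~\ref{fix_punkt_bemerk} with $f=g=0$ and of (iv) via the energy identity are only cosmetic variants of the paper's direct applications of Theorem~\ref{schwach_eindeut} and of the zero-Neumann-data argument, so no substantive difference remains.
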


\begin{proof}
        {\it i)} We define the operator $W: H^{\me}_{00}(\Gamma_2)' 
\rightarrow H^1(\Omega)$ \ by \ $W(\vphi) := L_d^l \circ \gamma_d \circ 
L_n^l(\vphi)$, where the operators $L_d^l$, $L_n^l$ and $\gamma_d$ are the 
same as in section 1.4. From Theorems~\ref{green_allgemein} and 
\ref{H_me00_eigenschaft} follows for $\vphi, \psi \in H^{\me}_{00}(\Gamma_2)'$
\begin{eqnarray}
\int_{\Omega} \left( \nabla L_n^l T_l(\vphi) - \nabla W(\vphi) \right)^t 
\!\!\!\!& A &\!\!\!\! ( \nabla L_n^l(\psi) ) \, dx \ = \label{Jour1} \\
& = & \int_{\Omega}{P \left( L_n^l T_l(\vphi) - W(\vphi) \right)
                       L_n^l(\psi) \, dx} \nonumber \\
& & \!\!\!\! + \int_{\Gamma_1\cup\Gamma_2}{\left( L_n^l T_l(\vphi) - W(\vphi)
         \right)_{\nu_A}  L_n^l(\psi) d\Gamma} \ = \ 0 . \nonumber
\end{eqnarray}
From an analogous argument we have
\begin{equation}
\mbox{}\ \ \ \ \ 
\int_{\Omega}{\left( \nabla W(\vphi) - \nabla L_n^l(\vphi) \right)^t 
              A (\nabla W(\psi) ) \, dx} \ = \ 0 .
\label{Jour2} \end{equation}
         If we denote by $\langle \cdot,\cdot \rangle$ the inner product on 
$L^2(\Omega)$, it follows from (\ref{Jour1}) and (\ref{Jour2})
\begin{eqnarray*}
\langle T_l \, \vphi, \vphi \rangle_* & = & 
         \langle A \, \nabla L_n^l T_l(\vphi), \nabla L_n^l(\vphi) \rangle \\
& \stackrel{(\ref{Jour1})}{=} &
                 \langle A \, \nabla W(\vphi), \nabla L_n^l(\vphi) \rangle \\
& \stackrel{(\ref{Jour2})}{=} &
                     \langle A \, \nabla W(\vphi), \nabla W(\vphi) \rangle \\
& \geq & c\, ||W(\vphi)||^2_{H^1(\Omega)} ,
\end{eqnarray*}
for every $\vphi \in H^{\me}_{00}(\Gamma_2)'$.

        {\it ii)} Let us suppose there exists a $\vphi \in H^{\me}_{00} 
(\Gamma_2)'$, such that $T_l \, \vphi = \vphi$. Define $w := L_n^l(\vphi)$ 
and $v := L_d \circ \gamma_d \circ L_n^l(\varphi)$. For the difference 
$v - w$ we have:
$$     (v - w)_{|_{\Gamma_2}} \ = \ (v - w)_{\nu_A|_{\Gamma_2}} \ = \ 0.    $$
\noindent  From the unicity Theorem~\ref{schwach_eindeut} we have $v = w$. 
The definition of $w$ and $v$ imply $0 = w_{|_{\Gamma_l}}$ and 
$0 = v_{\nu_A|_{\Gamma_1}} = w_{\nu_A|_{\Gamma_1}}$. 
Theorem~\ref{schwach_eindeut} now implies $\vphi= 0$.

{\it iii)} analogous to (\ref{Jour1}) and (\ref{Jour2}) one proves that for 
$\vphi, \psi \in H^{\me}_{00}(\Gamma_2)'$ the identities
\begin{equation}
\langle A \, \nabla L_n^l(\vphi), \nabla L_n^l T_l(\psi) \rangle \ = \
\langle A \, \nabla L_n^l(\vphi), \nabla W(\psi) \rangle \label{Jour3}
\end{equation}
and
\begin{equation}
\langle A \, \nabla W(\vphi), \nabla W(\psi) \rangle \ = \
\langle A \, \nabla W(\vphi), \nabla L_n^l(\psi) \rangle
\end{equation}
hold. These last equations imply
$$ \langle T_l\,\vphi,\psi \rangle_* \ = \ \langle \vphi,T_l\,\psi \rangle_* ,
                      \ \forall\ \vphi, \psi \in H^{\me}_{00}(\Gamma_2)' .  $$

        {\it iv)} Take $\vphi_1$, $\vphi_2$ in $H^{\me}_{00}(\Gamma_2)'$ with 
$T_l \, \vphi_1 = T_l \, \vphi_2$. Define now $w := L_n^l (\vphi_1 - \vphi_2)$ 
and $v := L_d^l \circ \gamma_d \circ L_n^l(\vphi_1 - \vphi_2)$. We clearly 
have $v_{\nu_A|_{\Gamma_1}} = 0$ and the hypothesis $T_l(\vphi_1 - \vphi_2) 
= 0$ implies $v_{\nu_A|_{\Gamma_2}}  =  0$. Since $v$ satisfies $P \, v = 0$, 
we conclude that $v$ is constant in $\Omega$. From $v_{|_{\Gamma_2}} = 
w_{|_{\Gamma_2}} \in H^{\me}_{00}(\Gamma_2)$ follows $v \equiv 0$.%
\footnote{The unique constant function in $H^{\me}_{00}(\Gamma_l)$ is the 
null function.}
Then we have $w \equiv 0$ in $\Omega$ and the equality $\vphi_1 = \vphi_2$ 
follows.
\qquad\end{proof}

        In the next theorem we verify two properties of $T_l$, that are needed 
in the convergence proof of the iterative method described in section 1.3.

\begin{theorem}  Let $T_l \in {\cal L}(H^{\me}_{00}(\Gamma_2)')$ be the 
operator defined in (\ref{Tl_def}). The following assertions are valid:
\begin{enumerate} \renewcommand{\labelenumi}{\roman{enumi})}
        \item  $T_l$ is regular asymptotic in $H^{\me}_{00}(\Gamma_2)'$, i.e. 
$\lim_{k \rightarrow \infty}{ ||T_l^{k+1}(\vphi) - T_l^k(\vphi)||_*} = 0,\ 
\forall \vphi \in H^{\me}_{00}(\Gamma_2)'$;
        \item  The operator $T_l$ is non expansive, i.e. 
$||T_l||_{ {\cal L}(H^{\me}_{00}(\Gamma_2)') } \leq 1$.
\end{enumerate}
\label{mazya} \end{theorem}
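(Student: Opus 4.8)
The plan is to derive both statements from the chain of identities established in the proof of Theorem~\ref{Tl_eigenschaft}, using the auxiliary operator $W = L_d^l \circ \gamma_d \circ L_n^l$ and the quadratic functional $\langle A\nabla L_n^l(\cdot),\nabla L_n^l(\cdot)\rangle = \|\cdot\|_*^2$. The guiding observation is that equations (\ref{Jour1}) and (\ref{Jour2}) say, geometrically, that passing from $L_n^l(\vphi)$ to $W(\vphi)$ and then to $L_n^l T_l(\vphi)$ is, at each step, an $A$-orthogonal projection in the Dirichlet-type energy. Concretely, (\ref{Jour2}) with $\psi=\vphi$ gives $\langle A\nabla W(\vphi),\nabla W(\vphi)\rangle = \langle A\nabla W(\vphi),\nabla L_n^l(\vphi)\rangle$, and (\ref{Jour1}) with $\psi=\vphi$ gives $\langle A\nabla L_n^l T_l(\vphi),\nabla L_n^l(\vphi)\rangle = \langle A\nabla W(\vphi),\nabla L_n^l(\vphi)\rangle$. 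So $\langle T_l\vphi,\vphi\rangle_* = \|W(\vphi)\|_*'^2 \le \|L_n^l(\vphi)\|_*^2$ by Cauchy--Schwarz in the $A$-energy inner product — wait, more precisely one argues $\|W(\vphi)\|_{A}^2 = \langle A\nabla W(\vphi),\nabla L_n^l(\vphi)\rangle \le \|W(\vphi)\|_A \|L_n^l(\vphi)\|_A = \|W(\vphi)\|_A\,\|\vphi\|_*$, hence $\|W(\vphi)\|_A \le \|\vphi\|_*$, and therefore $\langle T_l\vphi,\vphi\rangle_* = \|W(\vphi)\|_A^2 \le \|\vphi\|_*^2$.

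For part~ii) (non-expansiveness), I would combine this with self-adjointness and positivity from Theorem~\ref{Tl_eigenschaft}. Since $T_l$ is self-adjoint, positive, and satisfies $\langle T_l\vphi,\vphi\rangle_* \le \|\vphi\|_*^2$ for all $\vphi$, its spectrum lies in $[0,1]$, and for a self-adjoint operator $\|T_l\| = \sup_{\|\vphi\|_*=1}|\langle T_l\vphi,\vphi\rangle_*| \le 1$. This is the cleanest route; alternatively one can repeat the Cauchy--Schwarz estimate one level deeper, writing $\|T_l\vphi\|_*^2 = \langle T_l\vphi,T_l\vphi\rangle_* \le \langle T_l^2\vphi,\vphi\rangle_*^{1/2}\langle T_l\vphi,T_l\vphi\rangle_*^{1/2}$ — but invoking the spectral/numerical-range argument is shorter and uses only what is already proved.

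For part~i) (asymptotic regularity), the standard fact I would use is: if $S$ is a self-adjoint, positive, non-expansive operator on a Hilbert space, then $\|S^{k+1}\vphi - S^k\vphi\|\to 0$ for every $\vphi$. The argument: by the spectral theorem write $\vphi$ via the spectral measure $E_\lambda$ of $S$ on $[0,1]$, so $\|S^{k+1}\vphi - S^k\vphi\|^2 = \int_0^1 \lambda^{2k}(1-\lambda)^2\,d\langle E_\lambda\vphi,\vphi\rangle$. The integrand is bounded by $\sup_{\lambda\in[0,1]}\lambda^{2k}(1-\lambda)^2 = O(1/k^2)\to 0$ pointwise, and is dominated by the integrable function $\lambda\mapsto (1-\lambda)^2 \le 1$; by dominated convergence the integral tends to $0$. (One must note the mass at $\lambda=1$ contributes zero because of the $(1-\lambda)^2$ factor, and the mass at $\lambda=0$ contributes zero for $k\ge 1$.) If the paper prefers to avoid the spectral theorem, the same conclusion follows from the classical theorem that a non-expansive, asymptotically-something operator... — but cleanest is: for self-adjoint non-expansive $S$ with $S\ge 0$, $\{S^k\vphi\}$ is decreasing in norm hence convergent, and $\|S^{k+1}\vphi - S^k\vphi\|^2 = \|S^k\vphi\|^2 - 2\langle S^{k+1}\vphi,S^k\vphi\rangle + \|S^{k+1}\vphi\|^2$; using $\langle S^{k+1}\vphi,S^k\vphi\rangle = \|S^{k+1/2}\vphi\|^2$ and monotonicity of $k\mapsto \|S^{k/2}\vphi\|^2$, one sandwiches this between $\|S^k\vphi\|^2 - \|S^{k+1}\vphi\|^2 \to 0$ and shows it vanishes.

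The main obstacle is not the operator-theoretic machinery — that is standard once non-expansiveness, positivity and self-adjointness are in hand — but rather being careful about \emph{which} Hilbert-space norm one is estimating in. The identities (\ref{Jour1})–(\ref{Jour2}) are stated in the $L^2(\Omega)$ energy $\langle A\nabla\cdot,\nabla\cdot\rangle$, and one must keep straight that $\|\vphi\|_*^2 = \langle A\nabla L_n^l(\vphi),\nabla L_n^l(\vphi)\rangle$ precisely identifies the boundary $*$-norm with this interior energy via the \emph{injective} map $L_n^l$ (Lemma~\ref{neue_norm} plus its Remark). The Cauchy--Schwarz step that yields $\|W(\vphi)\|_A \le \|\vphi\|_*$ is the crux of non-expansiveness; everything else is bookkeeping. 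I would therefore structure the write-up as: (1) record the energy-projection identities and deduce $\|W(\vphi)\|_A\le\|\vphi\|_*$ and $\|L_n^l T_l(\vphi)\|_A \le \|W(\vphi)\|_A$, giving ii); (2) invoke Theorem~\ref{Tl_eigenschaft}(i),(iii) to place $\sigma(T_l)\subset[0,1]$; (3) apply the spectral/dominated-convergence argument for i).
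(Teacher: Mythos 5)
Your proposal is correct, but it proves the theorem by a different route than the paper. For part (ii) the paper does not pass through self-adjointness or the numerical range: it takes $w=L_n^l(\vphi)$ and $v=L_d^l\circ\gamma_d\circ L_n^l(\vphi)$ (your $W(\vphi)$) and applies Green's formula (Theorem~\ref{green_allgemein}) plus Cauchy--Schwarz twice to obtain $\|T_l\vphi\|_*\le\bigl(\int_\Omega(\nabla v)^tA\,\nabla v\,dx\bigr)^{\me}\le\|\vphi\|_*$ --- exactly the chain $\|L_n^lT_l\vphi\|_A\le\|W(\vphi)\|_A\le\|\vphi\|_*$ that you mention as an alternative at the end; your primary argument via Theorem~\ref{Tl_eigenschaft} (positivity, self-adjointness) and $\|T_l\|=\sup_{\|\vphi\|_*=1}\langle T_l\vphi,\vphi\rangle_*$ is a legitimate shortcut that recycles (\ref{Jour1})--(\ref{Jour2}). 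The real divergence is in part (i): the paper avoids the spectral theorem entirely. It notes $T_l^{k+1}\vphi-T_l^k\vphi=T_l^k(T_l-I)\vphi$, so it suffices to show $T_l^k\vphi\to0$ for $\vphi\in{\rm Rg}(T_l-I)$; writing the iterates through the functions $w_k,v_k$ of (\ref{Tl_schritt}) and using Green's formula, it derives the energy identities (\ref{GlMa1})--(\ref{GlMa2}), hence monotonicity of $\|T_l^k\psi\|_*$ and the telescoping bound (\ref{GlMa3}), $\|T_l^k\vphi\|_*^2\le2(\|T_l^k\psi\|_*^2-\|T_l^{k+1}\psi\|_*^2)\to0$, which is the original Kozlov--Maz'ya energy argument, self-contained at the PDE level and independent of (ii) and of self-adjointness. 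Your spectral/dominated-convergence proof (or the elementary square-root variant) is sound --- positivity and $\sigma(T_l)\subset[0,1]$ are indeed available from Theorem~\ref{Tl_eigenschaft} once your (ii) is in place, and since $\sup_{\lambda\in[0,1]}\lambda^{2k}(1-\lambda)^2=O(k^{-2})$ you even get $\|T_l^{k+1}-T_l^k\|\to0$ in operator norm, stronger than the pointwise statement --- but this abstraction comes at the price of invoking the spectral theorem and of making (i) depend logically on (ii) and on Theorem~\ref{Tl_eigenschaft}, whereas the paper's computation is elementary and is precisely the mechanism that motivates the whole iterative scheme.
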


\begin{proof}
        {\it i)} Because of the identity $(T_l^{k+1}(\vphi) - T_l^k(\vphi)) 
= T_l^k(T_l - I)(\vphi)$, it is enough to prove that $T_l^k(\vphi_0) 
\rightarrow 0$ for every $\vphi_0 \in$ Rg$(T_l - I)$. Take $\psi \in 
H^{\me}_{00}(\Gamma_2)'$ with $(T_l - I)\psi = \vphi$. Note that we can 
describe the iteration $T_l^k(\psi)$ using the functions
\begin{equation} 
        \left\{ \begin{array}{lll}
        w_k(\psi) & = \ L_n^l(\gamma_n(v_{k-1}(\psi))),& k\geq 1  \\
        v_k(\psi) & = \ L_d^l(\gamma_d(w_k(\psi))),    & k\geq 0
        \end{array} \right.
\label{Tl_schritt} \end{equation}
and $w_0(\psi) = L_n^l(\psi)$. From (\ref{Tl_schritt}) follows 
$w_k = (v_k)_\nuA = 0$ at $\Gamma_1$, $(w_k)_\nuA = (v_{k-1})_\nuA$ and 
$v_k = w_k$ at $\Gamma_2$. These identities and Theorem~\ref{green_allgemein} 
give us
\begin{eqnarray*}
    \int_\Omega{ (\nabla v_k)^t  A \, (\nabla v_k) \, dx} & = &
    \int_{\Gamma_2}{ v_k (v_k)_\nuA \, d\Gamma } , \\
    \int_\Omega{ (\nabla w_k)^t  A \, (\nabla w_k) \, dx} & = &
    \int_{\Gamma_2}{ w_k (w_k)_\nuA \, d\Gamma } , \\
    \int_\Omega{ (\nabla v_k)^t  A \, (\nabla w_k) \, dx} & = &
    \int_{\Gamma_2}{ w_k (v_k)_\nuA \, d\Gamma } , \\
    \int_\Omega{ (\nabla v_{k-1})^t  A \, (\nabla w_k) \, dx} & = &
    \int_{\Gamma_2}{ w_k (v_{k-1})_\nuA \, d\Gamma } .
\end{eqnarray*}
From these identities we obtain
\begin{eqnarray}
& & \int_\Omega{ \nabla (w_k-v_{k-1})^t  A \, \nabla (w_k-v_{k-1}) \, dx} \ =
\label{GlMa1} \\
& & = \ \int_\Omega{ \left[ (\nabla v_{k-1})^t  A \, (\nabla v_{k-1}) \, - \,
                  (\nabla w_k)^t  A \, (\nabla w_k) \right] \, dx} \nonumber
\end{eqnarray}
and
\begin{eqnarray}
& & \int_\Omega{ \nabla (v_k - w_k)^t  A \, \nabla (v_k - w_k) \, dx} \ =
\label{GlMa2} \\
& & = \ \int_\Omega{ \left[ (\nabla w_k)^t  A \, (\nabla w_k) \, - \, 
                  (\nabla v_k)^t  A \, (\nabla v_k) \right] \, dx} . \nonumber
\end{eqnarray}
Note that the definition of $\vphi$ and $\psi$ imply $w_k(\vphi) = w_{k+1} 
(\psi) - w_k(\psi)$. Equations (\ref{GlMa1}) and (\ref{GlMa2}) now imply

\medskip \noindent $\displaystyle
\int_\Omega{ (\nabla w_k(\vphi))^t  A \, (\nabla w_k(\vphi)) \, dx} \ \le $
\hfill

\hfill $\displaystyle 
\le \ 2 \int_\Omega{ \left[ (\nabla w_k(\psi))^t  A \, (\nabla w_k(\psi))
\, - \, 
(\nabla w_{k+1}(\psi))^t A \, (\nabla w_{k+1}(\psi)) \right] \, dx} .$ \\[2ex]
From this last equation we obtain
\begin{equation}
 ||T_l^k (\vphi)||_*^2 \ \le \
        2 \left( ||T_l^k (\psi)||_*^2 \ - \ ||T_l^{k+1} (\psi)||_*^2 \right) .
\label{GlMa3} \end{equation}
Another consequence of (\ref{GlMa1}) and (\ref{GlMa2}) is the inequality
$$  \int_\Omega{ (\nabla w_k)^t A (\nabla w_k) \, dx} \ - \
    \int_\Omega{ (\nabla w_{k+1})^t A (\nabla w_{k+1}) \, dx} \ \geq \ 0 ,  $$
\noindent  for every $k$, i.e. the sequence $\{ ||T_l^k (\psi)||_* \}$ does 
not increase. Now from (\ref{GlMa3}) follows
$$  \lim_{k\rightarrow\infty}{ \, ||T_l^k (\vphi)||_* } \ = \ 0 .  $$

        {\it ii)} For $\vphi \in H^{\me}_{00}(\Gamma_r)'$ define $w := 
L_n^l(\vphi)$ and $v := L_d^l \circ \gamma_d \circ L_n^l(\vphi)$. We claim 
that the inequality
\begin{equation}
  \langle T_l(\vphi), T_l(\vphi) \rangle_* \ \ \leq \ \ 
  \int_\Omega{ (\nabla v)^t  A \, (\nabla v) \, dx}
\label{GlMa4} \end{equation}
\noindent  holds. Indeed, as $T_l(\vphi) = \gamma_n v$ we have
\begin{eqnarray*}
\langle T_l(\vphi), T_l(\vphi) \rangle_*  & = &  \int_\Omega
  { (\nabla L_n^l(\gamma_n v))^t  A \, (\nabla L_n^l(\gamma_n v)) \, dx} \\
\!\!\!\!& = &\!\!\!\!  \int_{\Gamma_1\cup\Gamma_2}{ (L_n^l(\gamma_n v))_\nuA
             L_n^l(\gamma_n v) \, d\Gamma} \\ 
& & + \ \int_\Omega{ L_n^l(\gamma_n v) \, P(L_n^l(\gamma_n v)) \, dx} \\
& = & \int_{\Gamma_1\cup\Gamma_2}{v_\nuA \, L_n^l(\gamma_n v) \, d\Gamma}\ + \ 
      \int_\Omega{ P(v) \, L_n^l(\gamma_n v) \, dx} \\
& = &  \int_\Omega{ (\nabla v)^t  A \, (\nabla L_n^l(\gamma_n v)) \, dx} \\
&\leq& \left( \int_\Omega{ (\nabla v)^t  A \, (\nabla v) \, dx} \right)^{\me}
       \langle T_l(\vphi), T_l(\vphi) \rangle_*^{\me} ,
\end{eqnarray*}

\noindent  proving (\ref{GlMa4}). Now from the definition of $v$ and $w$ 
follows
\begin{eqnarray*}
\int_\Omega{ (\nabla v)^t  A \, (\nabla v) \, dx}  & = & 
                                  \int_{\Gamma_2}{ v_\nuA \, v \, d\Gamma} 
 = \int_{\Gamma_1\cup\Gamma_2}{ v_\nuA \, w \, d\Gamma}
 = \int_\Omega{ (\nabla v)^t  A \, (\nabla w) \, dx} \\
& \leq & \left( \int_\Omega{(\nabla v)^t  A \, (\nabla v) \, dx} \right)^{\me}
  \ \left( \int_\Omega{(\nabla w)^t  A \, (\nabla w) \, dx} \right)^{\me} .
\end{eqnarray*}
Putting all together we have
$$ ||T_l(\vphi)||_* \ \leq \ 
\left( \int_\Omega{(\nabla w)^t A \, (\nabla w) \, dx} \right)^{\me}
\ = \ \langle \vphi, \vphi \rangle_*^{\me} \ = \ ||\vphi||_* , $$
proving {\it (ii)}.
\qquad\end{proof}

        The next theorem guarantees the convergence of the iterative 
algorithm.

\begin{theorem}  Let $T$ and $T_l$ be the operators defined in section 1.4. 
If we have consistent Cauchy--data $(f,g)$, then the sequence $\vphi_k = T^k 
\vphi_0$ converges to the Neumann--trace at $\Gamma_2$ of the solution of 
(CP) for every $\vphi_0 \in H^{\me}_{00}(\Gamma_2)'$.
\label{Tk_konverg} \end{theorem}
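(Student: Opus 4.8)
The plan is to reduce the convergence of the affine iteration to the spectral behaviour of its linear part $T_l$, using the Hilbert space structure on $H^{\me}_{00}(\Gamma_2)'$ supplied by $\langle\cdot,\cdot\rangle_*$. Recall from \eqref{Tl_def} that $\vphi_{k+1} = T_l^{k+1}(\vphi_0) + \sum_{j=0}^k T_l^j(z_{f,g})$. First I would treat the homogeneous part $T_l^{k+1}(\vphi_0)$: by Theorem~\ref{mazya}, $T_l$ is non-expansive and regular asymptotic, and by Theorem~\ref{Tl_eigenschaft} it is self-adjoint, positive, injective, and does not have $1$ as an eigenvalue. A standard ergodic-type argument for a self-adjoint non-expansive operator on a Hilbert space then gives $T_l^k \to P_{\ker(I-T_l)}$ strongly; since $1$ is not an eigenvalue, $\ker(I-T_l)=\{0\}$, so $T_l^k(\vphi_0)\to 0$ for every $\vphi_0$. (Alternatively one can run this directly through the spectral theorem: $0\le T_l\le I$, and the spectral measure has no atom at $1$, so $\lambda^k\to 0$ pointwise on the spectrum off $\{1\}$ and dominated convergence in the spectral integral finishes it.)

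Next I would handle the inhomogeneous sum $S_k := \sum_{j=0}^k T_l^j(z_{f,g})$. Here the consistency of the Cauchy data enters decisively: because $(f,g)$ is consistent, (CP) has a genuine $H^1$-solution $u$, and by Remark~\ref{fix_punkt_bemerk} its Neumann trace $\overline{\vphi}=\gamma_n u$ is a fixed point of the affine operator $T$, i.e. $\overline{\vphi} = T_l(\overline{\vphi}) + z_{f,g}$, equivalently $z_{f,g} = (I-T_l)\overline{\vphi}$. Substituting this telescopes the sum: $S_k = \sum_{j=0}^k T_l^j(I-T_l)\overline{\vphi} = \overline{\vphi} - T_l^{k+1}(\overline{\vphi})$. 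Applying the homogeneous result again, $T_l^{k+1}(\overline{\vphi})\to 0$, so $S_k \to \overline{\vphi}$.

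Combining the two parts, $\vphi_k = T_l^k(\vphi_0) + S_{k-1} \to 0 + \overline{\vphi} = \overline{\vphi} = \gamma_n u$, which is exactly the Neumann trace at $\Gamma_2$ of the solution of (CP), and the convergence is in the $\|\cdot\|_*$-norm, hence by Lemma~\ref{neue_norm} in the usual norm of $H^{\me}_{00}(\Gamma_2)'$.

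The main obstacle is the first step: proving $T_l^k \to 0$ strongly from non-expansiveness, self-adjointness, and the absence of the eigenvalue $1$. Non-expansiveness alone does not force strong convergence of the powers; one really needs either the self-adjoint spectral calculus (to write $\|T_l^k\vphi\|_*^2 = \int_{[0,1]} \lambda^{2k}\, d\langle E_\lambda\vphi,\vphi\rangle_*$ and invoke dominated convergence, using $E(\{1\})=0$ since $1\notin\sigma_p(T_l)$ and $T_l$ self-adjoint means the only way $1$ sits in the spectrum without being an eigenvalue is in the continuous spectrum, where the spectral measure still assigns it mass zero as an atom) or the Browder–Halpern / mean ergodic route combined with the regular-asymptotic property from Theorem~\ref{mazya}(i) to upgrade weak subsequential limits to a strong limit. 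I would favour the spectral-theorem argument as the cleanest, and would also note that the positivity in Theorem~\ref{Tl_eigenschaft}(i) together with non-expansiveness pins the spectrum to $[0,1]$, which is what makes $\lambda^{2k}\to 0$ valid off the single point $1$. The converse direction — that if $\vphi_k$ converges, the limit solves (CP) — follows immediately by passing to the limit in $\vphi_{k+1}=T(\vphi_k)$, using continuity of the affine operator $T$, to get a fixed point, and then invoking Remark~\ref{fix_punkt_bemerk} and the uniqueness Theorem~\ref{schwach_eindeut}.
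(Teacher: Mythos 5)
Your argument is correct and follows essentially the same route as the paper: consistency of $(f,g)$ supplies the fixed point $\overline{\vphi}=\gamma_n u$ via Remark~\ref{fix_punkt_bemerk}, the error $\vphi_k-\overline{\vphi}$ reduces to $T_l^k(\vphi_0-\overline{\vphi})$ (your telescoping of $\sum_j T_l^j z_{f,g}$ is the same computation as the paper's $\eps_{k+1}=T_l\eps_k$), and everything rests on $T_l^k\psi\to 0$ strongly, which the paper attributes to Theorems~\ref{Tl_eigenschaft} and \ref{mazya}. Your spectral justification of that last step ($\sigma(T_l)\subset[0,1]$ by positivity and non-expansiveness, no atom of $E_\lambda$ at $1$ since $1\notin\sigma_p(T_l)$ for a self-adjoint operator, then dominated convergence) is a legitimate way to make explicit what the paper leaves implicit; the paper's intended route uses Theorem~\ref{mazya}(i), which gives $T_l^k\to 0$ on $\mathrm{Rg}(T_l-I)$, together with density of that range (from self-adjointness and the fact that $1$ is not an eigenvalue) and non-expansiveness.
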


\begin{proof}
        It is enough to prove that $T^k \vphi_0$ converges to the fixed point 
$\overline{\vphi}$ of $T$. If we define $\eps_k = \vphi_k - \overline{\vphi}$ 
we have
\begin{eqnarray*}
\eps_{k+1} & = & \vphi_{k+1} - \overline{\vphi} \\
           & = & T(\vphi_k) - T(\overline{\vphi}) \\
           & = & T_l(\vphi_k)+z_{f,g} - T_l(\overline{\vphi})-z_{f,g} \\
           & = & T_l(\eps_k) \, .
\end{eqnarray*}
\noindent  Theorems~\ref{Tl_eigenschaft} and \ref{mazya} imply $\eps_k 
\rightarrow 0$.
\qquad\end{proof}

        The converse of Theorem~\ref{Tk_konverg} is valid, i.e. when the 
sequence $\vphi_k = T^k \vphi_0$ converges, the associated Cauchy problem is 
consistent and $\overline{\vphi} := \lim{\vphi_k}$ is the Neumann--trace of 
the problem's solution. This result can also be understood as an existence 
criterion for Cauchy problems.

\begin{theorem}  Given the Cauchy data $(f,g) \in H^{\me}(\Gamma_1) \times 
H^{\me}_{00}(\Gamma_1)'$, we denote by $\{ \vphi_k \}$ the sequence 
generated by the iteration (IT). If $\{ \vphi_k \}$ converges in $H^{\me}_{00} 
(\Gamma_2)'$, the Cauchy problem (CP) has a solution $u$ in $H^1(\Omega;P)$ 
and $u_{\nuA|_{\Gamma_2}} = \lim_k{\vphi_k}$.
\label{exist_kriterium} \end{theorem}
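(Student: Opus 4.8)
The plan is to exploit the decomposition $\vphi_{k}=T_l^{k}(\vphi_0)+\sum_{j=0}^{k-1}T_l^{j}(z_{f,g})$ from~(\ref{Tl_def}) together with the assumed convergence $\vphi_k\to\overline\vphi$ in $H^{\me}_{00}(\Gamma_2)'$. First I would show that $\overline\vphi$ is a fixed point of $T$. Since $T$ is affine with linear part $T_l$, and since by Theorem~\ref{mazya}(ii) the operator $T_l$ is non-expansive (hence continuous) on the Hilbert space $(H^{\me}_{00}(\Gamma_2)',\langle\cdot,\cdot\rangle_*)$, passing to the limit in $\vphi_{k+1}=T(\vphi_k)$ gives $\overline\vphi=T(\overline\vphi)$ directly. (One uses here that the $*$-norm is equivalent to the Sobolev norm by Lemma~\ref{neue_norm}, so convergence in one is convergence in the other and all the continuity statements transfer.)

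Next I would invoke Remark~\ref{fix_punkt_bemerk}: being a fixed point of $T$ means that, running one step of (IT) starting from $\overline\vphi$, the function $w=L_n(\overline\vphi)$ (solving $Pw=0$, $w|_{\Gamma_1}=f$, $w_{\nuA}|_{\Gamma_2}=\overline\vphi$) and the function $v=L_d(\gamma_d w)$ (solving $Pv=0$, $v_{\nuA}|_{\Gamma_1}=g$, $v|_{\Gamma_2}=\gamma_d w$) satisfy $\gamma_n v=\overline\vphi=\gamma_n w$ and $\gamma_d v=\gamma_d w$ on $\Gamma_2$; that is, $v-w$ has vanishing Dirichlet and Neumann traces on $\Gamma_2$ while $P(v-w)=0$ in $\Omega$. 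By the weak uniqueness Theorem~\ref{schwach_eindeut} this forces $v=w$. Then $u:=w=v$ is in $H^1(\Omega;P)$, solves $Pu=0$ in $\Omega$, and inherits $u|_{\Gamma_1}=f$ from the definition of $w$ and $u_{\nuA}|_{\Gamma_1}=g$ from the definition of $v$ — so $u$ solves (CP). Finally $u_{\nuA}|_{\Gamma_2}=\gamma_n w=\overline\vphi=\lim_k\vphi_k$, which is the claimed identification of the limit.

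The one point requiring genuine care — and the step I expect to be the main obstacle — is the justification that the fixed-point equation $\overline\vphi = T(\overline\vphi)$ really does let us \emph{build} a bona fide $H^1(\Omega;P)$ solution rather than merely an equality of traces. Concretely, one must check that the two solution operators $L_n$ and $L_d$ invoked in (IT) are well-defined and continuous on the relevant trace spaces (this is where Theorems~\ref{spursatz_rand_st} and~\ref{spur_H_om_P_rand_st} enter, guaranteeing $\gamma_d w\in H^{\me}(\Gamma_2)$ so that $L_d(\gamma_d w)$ makes sense, and that $v\in H^1(\Omega;P)$ so that $\gamma_n v$ is a well-defined element of $H^{\me}_{00}(\Gamma_2)'$), and that the conormal derivative on $\Gamma_1$ of $v$ really equals the prescribed datum $g$ in the weak sense — all of which are in fact already packaged into the definitions of section~1.3–1.4. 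Once those well-posedness facts are quoted, the argument is essentially the observation of Remark~\ref{fix_punkt_bemerk} run in reverse, plus continuity of $T$; no new estimate is needed. A brief remark could be added that, combined with Theorem~\ref{Tk_konverg}, this shows the iteration converges \emph{iff} (CP) is solvable, so the scheme doubles as a constructive solvability test.
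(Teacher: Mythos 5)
Your proposal is correct and follows essentially the same route as the paper: pass to the limit in $\vphi_{k+1}=T(\vphi_k)$ using the continuity of the affine operator $T$ to conclude that $\overline{\vphi}=\lim_k\vphi_k$ is a fixed point, then apply the argument of Remark~\ref{fix_punkt_bemerk} together with the uniqueness Theorem~\ref{schwach_eindeut} to obtain $w=v$, so that $u:=w$ solves (CP) and $u_{\nuA|_{\Gamma_2}}=\overline{\vphi}$. You merely make explicit the continuity of $T$ (via Theorem~\ref{mazya} and Lemma~\ref{neue_norm}) and the well-posedness of the mixed problems behind $L_n$ and $L_d$, which the paper leaves implicit.
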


\begin{proof}
        If we define $\overline{\vphi} = \lim_k{\vphi_k} \in H^{\me}_{00} 
(\Gamma_2)'$, we have
$$  T \, \overline{\vphi} \ = \ T(\lim_{k\rightarrow\infty}{\vphi_k}) \ = \ 
    \lim_{k\rightarrow\infty}{\vphi_{k+1}} \ = \ \overline{\vphi} \, .  $$
\noindent  Therefore $\overline{\vphi}$ is a fixed point of $T$. The argument 
of Remark~\ref{fix_punkt_bemerk} implies the existence of a solution for (CP)
 and the theorem follows.
\qquad\end{proof}

\subsection{A spectral property of $T_l$}

        Before going any further with the analysis of the iterative algorithm, 
we discuss an important spectral property of $T_l$. We have already proved in 
Theorem~\ref{Tl_eigenschaft} that $T_l$ is positive, self adjoint and it's 
spectrum belongs to $[0,1]$. Now we verify that 1 belongs to $\sigma(T_l)$.

\begin{theorem}  Let $T_l$ be the operator defined in section 1.4. If there 
exists 
$(f,g) \in  H^{\me}(\Gamma_1) \times H^{\me}_{00}(\Gamma_1)'$ such that the 
Cauchy problem (CP) is inconsistent for the data $(f,g)$, then 1 belongs to 
the continuous spectrum $\sigma_c(T_l)$ of $T_l$.
\label{Tl_norm} \end{theorem}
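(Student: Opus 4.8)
The plan is to argue by contradiction: assuming $1 \notin \sigma_c(T_l)$ together with the already-established facts that $T_l$ is positive, self-adjoint, injective, and $1$ is not an eigenvalue (Theorem~\ref{Tl_eigenschaft}), I would deduce that $1 \in \rho(T_l)$, i.e. $(I - T_l)$ is boundedly invertible, and then show this forces \emph{every} Cauchy problem to be consistent, contradicting the hypothesis. The point is that for a self-adjoint operator the spectrum decomposes into point spectrum, continuous spectrum, and resolvent set (there is no residual spectrum); since $1$ is neither an eigenvalue nor, by assumption, in $\sigma_c(T_l)$, it must lie in $\rho(T_l)$. Note that $1 \in \sigma(T_l)$ is already known from the preceding discussion, so this by-contradiction setup is really about \emph{which part} of the spectrum.

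Given the inconsistent data $(f,g)$, I would run the iteration (IT) to produce $\vphi_k = T^k\vphi_0$ and use the decomposition from (\ref{Tl_def}): $\vphi_k = T_l^k\vphi_0 + \sum_{j=0}^{k-1} T_l^j(z_{f,g})$. First I would observe that if $1 \in \rho(T_l)$ then, since $\|T_l\|\le 1$ and $T_l$ is self-adjoint and positive, actually $\sigma(T_l) \subset [0, 1-\delta]$ for some $\delta > 0$ (the spectrum is closed, contained in $[0,1]$, and avoids $1$); hence $\|T_l\| \le 1-\delta < 1$, so $T_l$ is a strict contraction. Then $T_l^k\vphi_0 \to 0$ and the Neumann series $\sum_{j\ge 0} T_l^j(z_{f,g})$ converges to $(I - T_l)^{-1} z_{f,g}$. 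Therefore $\vphi_k$ converges in $H^{\me}_{00}(\Gamma_2)'$ to $\overline{\vphi} := (I-T_l)^{-1} z_{f,g}$, which is a fixed point of $T$: indeed $T\overline{\vphi} = T_l\overline{\vphi} + z_{f,g} = \overline{\vphi}$. By Theorem~\ref{exist_kriterium} (or directly Remark~\ref{fix_punkt_bemerk}), the existence of a fixed point of $T$ yields a solution $u \in H^1(\Omega;P)$ of (CP) for the data $(f,g)$. This contradicts the inconsistency of $(f,g)$, so $1 \in \sigma_c(T_l)$ after all.

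The main obstacle is the step from ``$1 \notin \sigma_c(T_l)$ and $1$ not an eigenvalue'' to ``$1 \in \rho(T_l)$'': this relies on the absence of residual spectrum for self-adjoint operators, which is standard but should be invoked carefully, together with the already-proved self-adjointness of $T_l$ in the $\langle\cdot,\cdot\rangle_*$ inner product. A secondary point to handle cleanly is that $\sigma(T_l) \subset [0,1]$ closed and missing the isolated candidate point $1$ gives a genuine spectral gap $\|T_l\| < 1$; here one uses that $\|T_l\| = \sup\sigma(T_l)$ for a positive self-adjoint operator and that $\sigma(T_l)$ is compact, so its supremum is attained and must be strictly below $1$. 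Everything else — running the Neumann series, identifying the limit as a fixed point, and invoking the consistency criterion — is routine given the earlier results. One should also double-check that $z_{f,g} \in H^{\me}_{00}(\Gamma_2)'$, which follows from the mapping properties of $L_n^l$, $L_d^l$, $\gamma_d$, $\gamma_n$ and the trace theorems already cited in the construction of $T$.
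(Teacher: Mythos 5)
Your proof is correct, and its endgame coincides with the paper's: assume the conclusion fails, deduce that $T_l$ is a strict contraction in the $\|\cdot\|_*$ norm, conclude that $\vphi_k = T^k\vphi_0$ converges, and invoke Theorem~\ref{exist_kriterium} to produce a solution of (CP), contradicting the inconsistency of $(f,g)$. Where you differ is the spectral step that yields the contraction. The paper reduces the theorem to the existence of eigenvalues of $T_l$ arbitrarily close to $1$ and argues that if $\sigma_p(T_l)\subset[0,\delta]$ then $\|T_l\|\le\delta$ --- a step that implicitly identifies the operator norm with the supremum of the point spectrum, which is not justified for a general self-adjoint operator (continuous spectrum could still reach up to $1$). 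You instead work with the full spectrum: since $T_l$ is self-adjoint there is no residual spectrum, so $1\notin\sigma_p(T_l)\cup\sigma_c(T_l)$ forces $1\in\rho(T_l)$; then closedness of $\sigma(T_l)\subset[0,1]$ gives a genuine gap and $\|T_l\|=\sup\sigma(T_l)<1$, after which the Neumann series and Theorem~\ref{exist_kriterium} finish exactly as in the paper. Your handling is the tighter one at precisely the point where the paper is loose, and it needs only the standard spectral decomposition for self-adjoint operators rather than any claim about eigenvalues. One small slip: $1\in\sigma(T_l)$ is not ``already known from the preceding discussion'' --- establishing it is exactly the content of this theorem and its corollary $\|T_l\|=1$ --- but since your contradiction argument never uses that remark, nothing in the proof is affected.
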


\begin{proof}
      Let $\{ E_\lambda \}_{\lambda\in\R}$ be the spectral family for $T_l$ 
and denote by $I$ be the identity operator in $H^{\me}_{00}(\Gamma_2)'$. From 
Theorem~\ref{Tl_eigenschaft} we conclude that $E_\lambda = I$ for $\lambda 
\geq 1$. It's enough to prove that given $\delta \in (0,1)$ there exists an 
eigenvalue $\lambda_0$ of $T_l$ in the interval $(\delta,1)$.

      If this condition were not satisfied the point spectrum $\sigma_p(T_l)$ 
would be a subset of $[0,\delta]$ and $T_l$ would be contractive with norm 
$||T_l|| \leq \delta$. An immediate consequence of this is the convergence 
of the sequence $\vphi_k = T^k \vphi_0$, where $T = T_l + z_{f,g}$. Now 
Theorem~\ref{exist_kriterium} would imply the existence of a solution for 
the Cauchy problem (CP) with data $(f,g)$, contradicting the hypothesis of 
$(f,g)$ being inconsistent Cauchy data.
\qquad\end{proof}

\begin{corollary}  From Theorems~\ref{Tl_eigenschaft} and \ref{Tl_norm} 
follows\, $||T_l|| = 1$. \hfill \qquad
\end{corollary}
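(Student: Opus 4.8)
The plan is to obtain $\|T_l\|=1$ by locating the extreme point of the spectrum of $T_l$. First I would record the structural facts already available: by Theorem~\ref{Tl_eigenschaft} (together with the non-expansiveness established in Theorem~\ref{mazya}) the operator $T_l$ is bounded, positive and self adjoint on the Hilbert space $(H^{\me}_{00}(\Gamma_2)',\langle\cdot,\cdot\rangle_*)$, and its spectrum satisfies $\sigma(T_l)\subseteq[0,1]$. For a bounded self adjoint operator one has the standard identity $\|T_l\|=r(T_l)=\sup\{\lambda:\lambda\in\sigma(T_l)\}$, so the whole claim reduces to showing $1\in\sigma(T_l)$.

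The upper bound $\|T_l\|\le 1$ is then immediate from $\sigma(T_l)\subseteq[0,1]$ (equivalently from Theorem~\ref{mazya}~(ii)). For the matching lower bound I would invoke Theorem~\ref{Tl_norm}. Its hypothesis is that there exists at least one pair $(f,g)\in H^{\me}(\Gamma_1)\times H^{\me}_{00}(\Gamma_1)'$ for which (CP) is inconsistent; this is precisely the ill-posedness of the elliptic Cauchy problem and can be justified either by transplanting Hadamard's example of section~1.2 to the present operator, or by observing that if \emph{every} $(f,g)$ were consistent then, by Theorem~\ref{exist_kriterium} and the uniqueness Theorem~\ref{schwach_eindeut}, (CP) would be well posed, contradicting what is known. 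Granting this, Theorem~\ref{Tl_norm} yields $1\in\sigma_c(T_l)\subseteq\sigma(T_l)$, hence $\|T_l\|=\sup\sigma(T_l)\ge 1$.

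Combining the two bounds gives $\|T_l\|=1$. The only step that is not purely formal is checking that the hypothesis of Theorem~\ref{Tl_norm} is actually met, i.e.\ the existence of inconsistent Cauchy data; I expect this to be the main (and essentially the only) obstacle, and I would dispatch it with the standard observation that the data-to-trace correspondence attached to (CP) does not have closed range. Everything else is the spectral radius formula for self adjoint operators applied to the spectral information collected in Theorems~\ref{Tl_eigenschaft} and~\ref{Tl_norm}.
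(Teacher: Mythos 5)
Your argument is correct and is essentially the paper's own (implicit) one: since $T_l$ is self adjoint with $\sigma(T_l)\subseteq[0,1]$ by Theorem~\ref{Tl_eigenschaft} and Theorem~\ref{mazya}, the norm equals the spectral radius, the upper bound $\|T_l\|\le 1$ comes from non-expansiveness, and the lower bound comes from $1\in\sigma_c(T_l)$ supplied by Theorem~\ref{Tl_norm}. Your only extra concern --- verifying the existence of inconsistent Cauchy data --- is simply the hypothesis of Theorem~\ref{Tl_norm}, which the corollary (like the paper) inherits rather than reproves, and which is standard (e.g.\ Hadamard-type data as in sections~1.2 and~3.3).
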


\subsection{Error estimation}

        For simplicity we investigate in this section the iteration (IT) for 
the operator $P = -\Delta$ in two special domains. Analog results can be 
obtained for general operators of the form (\ref{L-definition}) every time 
the spectral decomposition of the operator is known.

        In the first problem we take $\Omega = (-\pi,\pi)\times(-\pi,\pi)$, 
$\Gamma_1 = \{ (x,0);\ x \in (-\pi,\pi) \}$, $\Gamma_2 = \{ (x,\pi);\ x \in 
(-\pi,\pi) \}$ and want to solve the problem

\medskip \noindent
$ (CP\ 1)            \hfill \left\{  \begin{array}{rl}
                             \Delta u = 0     ,& \mbox{in} \ \Omega \\
                             u = f            ,& \mbox{at} \ \Gamma_1 \\
                             u_\nu = g        ,& \mbox{at} \ \Gamma_1 \\
                             u(x,\pm\pi) = 0  ,& x \in (-\pi,\pi)
                     \end{array} \right.  \hfill  $
\medskip

\noindent  In the second problem $\Omega$ is the ring centered at the origin 
with inner and outer radius respectively $r_0$ and 1, $\Gamma_1 = \{ 
(1,\theta);\ \theta \in (-\pi,\pi) \}$, $\Gamma_2 = \{ (r_0,\theta);\ \theta 
\in (-\pi,\pi) \}$. The problem to be solved is

\medskip \noindent
$ (CP\ 2)             \hfill \left\{  \begin{array}{rl}
                            \Delta u = 0 ,& \mbox{in}\ \Omega    \\
                            u = f        ,& \mbox{at}\ \Gamma_1 \\
                            u_\nu = g    ,& \mbox{at}\ \Gamma_1 \\
                      \end{array} \right.   \hfill  $
\medskip

        As we are working on special domains, it is possible to describe the 
action of the operator $T_l$ explicitly. If $\vphi_0 = \sum{\vphi_{0,j} \, 
\sin(jy)}$ is given in the Sobolev space of periodic functions%
\footnote{For $s\in\R$ one defines $ H^s_{\rm per}((-\pi,\pi)) := 
\{ \vphi(y) = \sum\limits_{k\in\Z}{c_k \ e^{iky}}\ |\ \sum\limits_{k\in\Z} 
{(1+k^2)^{s} c_k^2} \ < \ \infty \}$.}
$H^{\mme}_{\rm per}(\Gamma_2)$, we have for (CP 1)
\begin{equation}
(T_l^k \, \vphi_0)(x) \ = \ 
              \sum_{j=1}^{\infty}{ \lambda_j^{2k} \, \vphi_{0,j}\, \sin(jx) },
\label{T_l_potenz} \end{equation}
\noindent  where $\lambda_j = sinh(2j\pi)/cosh(2j\pi),\ j\in\N$. As we intend 
to measure how fast the error $\eps_k := \vphi_k - \overline{\vphi}$ 
converges to zero, we deduce from (\ref{T_l_potenz}) and from the equality 
$\eps_{k+1} = T_l \, \eps_k$ the estimate
\begin{equation}
  ||\eps_k||_{H^{\mme}_{\rm per}(\Gamma_2)}^2 \ \leq \ 
  \sum_{j\geq 1}{j^{-1}\, \left( \lambda_j^k\,\eps_{0,j} \right)^2} \, .
\label{abschaetz_eigenwerte} \end{equation}
        If the initial error $\eps_0$ has the nice property of consisting only 
of the lower frequencies $j \leq J$, equation~(\ref{abschaetz_eigenwerte}) 
simplifies to
$$ ||\eps_k||_{H^{\mme}_{\rm per}(\Gamma_2)}^2 \ \leq \ 
        \lambda_j^{2k} \, ||\eps_0||_{H^{\mme}_{\rm per}(\Gamma_2)}^2 \, .  $$
\noindent  In the very special case $J=1$ and $\eps_0 = \eps_{0,j} \sin(x)$ 
one calculates for $k=10^5$ the power of the first eigenvalue 
$\lambda_1^{2k} = 0.061$. Therefore we must evaluate $10^5$ iteration steps 
to reduce the error to 6\% of the initial error.

        Next we analyze a more realistic situation, in which the initial 
error $\eps_0 = \vphi_0 - \overline{\vphi}$ has more regularity than a 
$H^{\mme}_{\rm per}(\Gamma_2)$ distribution. We assume that there exists a 
monotone sequence of positive real numbers $\{ c_j \}$ such that
$$ \lim_{j \rightarrow\infty}{c_j} \ = \ \infty \ \ \ \mbox{and} \ \ \
   \sum_{j\geq 1}{ j^{-1} \, c_j^2 \, \eps_{0,j}^2 }\ =\ M\ < \ \infty \, . $$
In this case the error at the $\rm k^{\rm th}$--iteration step can be 
estimated by
\begin{eqnarray}
||\eps_k||_{H^{\mme}_{\rm per}(\Gamma_2)}^2  & \leq &
     \lambda_J^{2k} \, \left( \frac{c_J}{c_1} \right)^2 \, \sum_{j\leq J}
     { j^{-1} \, \eps_{0,j}^2 } \ + \ \frac{1}{c_J^2} \, \sum_{j>J}
     { j^{-1} \, c_j^2 \, \lambda_j^{2k} \eps_{0,j}^2}  \nonumber \\
& \leq & \lambda_J^{2k} \, \left( \frac{c_J}{c_1} \right)^2 \,
         ||\eps_0||_{H^{\mme}_{\rm per}(\Gamma_2)}^2 \ + \ \frac{M}{c_J^2} .
                                                     \label{abschaetz_gewicht}
\end{eqnarray}
        For the Cauchy problem (CP 2) we have an analogous result. If the 
iteration is again formulated at $H^{\mme}_{\rm per}(\Gamma_2)$ one obtains 
for the operator $T_l$ the eigenfunctions $\sin(j\theta)$, $cos(j\theta)$ 
with corresponding eigenvalues
$$ \lambda_j \ = \ [(r_0^{-(j+1)} - r_0^{j-1}) (r_0^{-j} - r_0^{j})]\ /\ 
                   [(r_0^{-(j+1)} + r_0^{j-1}) (r_0^{-j} + r_0^j)] \, .  $$
        In Figure~\ref{fig1} we show a qualitative comparison between the 
eigenvalues of $T_l$ in the different domains considered in this section.

\begin{figure}
\epsfysize4cm \centerline{ \epsfbox{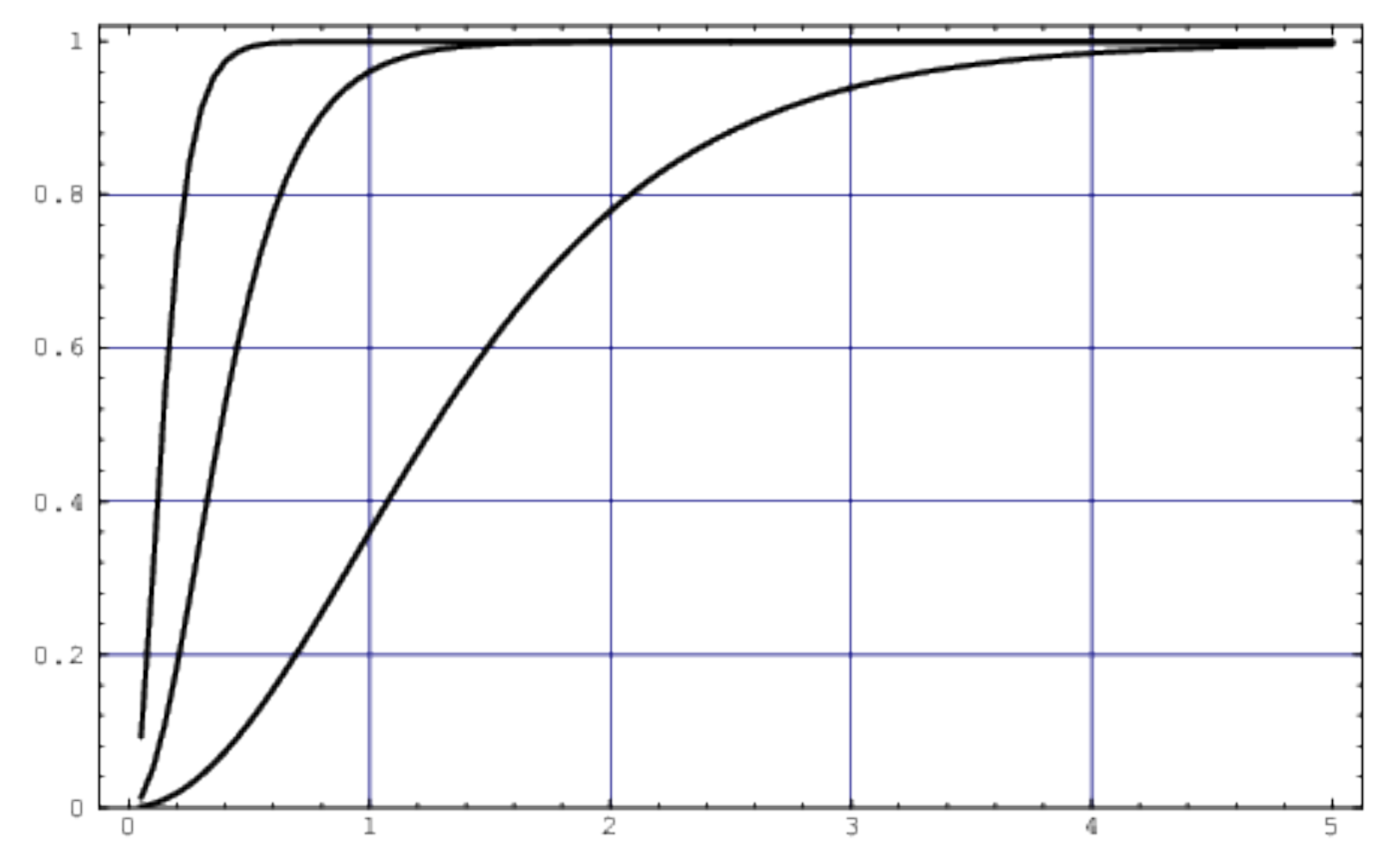} }
\vskip-4cm \unitlength1cm
\begin{center}
\begin{picture}(10,4)
\put(2.4,3.5){\small A} \put(3.2,2.8){\small B} \put(4.1,2.1){\small C}
\put(6.5,2.8){\small A} \put(6.5,2.1){\small B} \put(6.5,1.3){\small C}
\put(7,1.4){\footnotesize Ring} \put(7,1.1){\footnotesize $r_0 = 0.5$} 
\put(7,2.1){\footnotesize Ring} \put(7,1.8){\footnotesize $r_0 = 0.1$}
\put(7,2.8){\footnotesize Square}
\put(5.2,-0.1){$j$} \put(1.3,2){$\lambda_j$}
\end{picture}
\caption{Eigenvalues of $T_l$ in different domains \label{fig1}}
\end{center}
\end{figure}

\subsection{Regularization}

       The objective of regularizing the iteration (IT) is to choose an 
operator $T_{\rm reg}$ such that the regularized sequence $\tilde{\vphi_k} 
:= T_{\rm reg}^k \, \vphi_0 + \sum_{j<k} T_{\rm reg}^j \, z_{f,g}$ converges 
faster than the original sequence $\vphi_k := T_l^k \, \vphi_0 + \sum_{j<k} 
T_l^j \, z_{f,g}$. We also have to assure that the difference 
$\|\lim{\tilde{\vphi_k}} - \lim{\vphi_k}\|$ remains small.

        We start with the {\em a priori} assumption that the Cauchy data 
of (CP) satisfies $(f,g) \in H^{r}(\Gamma_1) \times H^{\me}_{00}(\Gamma_1)'$, 
with $r > \me$.

Given the measured data $(f_\eps,g_\eps)$ in $L^2(\Gamma_1) \times 
H^{\me}_{00}(\Gamma_1)'$, we claim that using a smoothing operator 
$S: L^2 \to H^{\me}$ it is possible to generate a $\tilde{f_\eps} := 
S f_\eps \in H^{\me}$ satisfying $\| f - \tilde{f_\eps} \|_{\me} \leq \eps'$. 
Indeed this is a consequence of 

\begin{lemma} \label{lemma-fehler-glaetung}
        Let $f \in H^r$, $r > s > 0$. There exists a smoothing operator 
$S: L^2 \to H^s$ and a positive function $\gamma$ with $\lim_{x\downarrow 0} 
\gamma(x) = 0$, such that for $\eps > 0$ and $f_\eps \in L^2$ with 
$\|f - f_\eps\|_{L^2} \leq \eps$, we have $\|f - S f_\eps\|_s \le 
\gamma(\eps)$.
\end{lemma}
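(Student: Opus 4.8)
The plan is to construct $S$ explicitly using a spectral cut-off (Fourier truncation / mollification) and to optimize the truncation level as a function of $\eps$. First I would pass to a Fourier-type description of the Sobolev spaces on $\Gamma_1$: expand $f_\eps = \sum_k \hat{f}_{\eps,k} e_k$ in an orthonormal eigenbasis $\{e_k\}$ of the Laplace–Beltrami operator on the (closed) manifold $\Gamma_1$, so that $\|h\|_t^2 \simeq \sum_k (1+\mu_k)^{t} |\hat h_k|^2$ with $\mu_k \to \infty$. Define the smoothing operator by low-pass filtering, $S_N h := \sum_{\mu_k \le N} \hat{h}_k e_k$, which obviously maps $L^2$ into $H^s$ (indeed into $C^\infty$) for every fixed $N$, and choose $N = N(\eps)$ at the end.

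Next I would estimate $\|f - S_N f_\eps\|_s$ by the triangle inequality, splitting it as $\|f - S_N f\|_s + \|S_N(f - f_\eps)\|_s$. For the first term, the truncation error of an $H^r$ function measured in $H^s$ with $s < r$ satisfies $\|f - S_N f\|_s^2 = \sum_{\mu_k > N} (1+\mu_k)^{s} |\hat f_k|^2 \le (1+N)^{s-r}\sum_{\mu_k > N}(1+\mu_k)^r |\hat f_k|^2 \le (1+N)^{-(r-s)} \|f\|_r^2$, which tends to $0$ as $N \to \infty$. For the second term, on the finite-dimensional range of $S_N$ the $H^s$ and $L^2$ norms are comparable with constant $(1+N)^{s/2}$, so $\|S_N(f-f_\eps)\|_s \le (1+N)^{s/2}\|f - f_\eps\|_{L^2} \le (1+N)^{s/2}\eps$. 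Thus $\|f - S_N f_\eps\|_s \le (1+N)^{-(r-s)/2}\|f\|_r + (1+N)^{s/2}\eps$.

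Finally I would balance the two terms: choosing $N = N(\eps)$ so that $(1+N)^{-(r-s)/2}$ and $(1+N)^{s/2}\eps$ are of the same order, e.g. $(1+N(\eps)) \simeq \eps^{-2/(r)}$ (any choice with $N(\eps)\to\infty$ and $(1+N(\eps))^{s/2}\eps \to 0$ works), and then set $S := S_{N(\eps)}$ and $\gamma(\eps) := (1+N(\eps))^{-(r-s)/2}\|f\|_r + (1+N(\eps))^{s/2}\eps$. One checks $\gamma$ is positive and $\gamma(\eps)\to 0$ as $\eps\downarrow 0$; monotonicity can be arranged by replacing $\gamma$ with $\tilde\gamma(\eps):=\sup_{0<\eps'\le\eps}\gamma(\eps')$ if desired. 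I do not expect a serious obstacle here; the only mild subtlety is that $S$ is allowed to depend on $\eps$ (a family of operators), which matches the statement, and that the constant $\|f\|_r$ enters $\gamma$ — this is harmless since $f$ is fixed in the hypothesis. The argument uses nothing beyond the spectral characterization of Sobolev norms on a compact manifold, so it transfers verbatim to $\Gamma_1$ here.
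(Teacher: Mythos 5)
Your argument is correct, but it is worth pointing out that the paper itself does not prove this lemma at all: its ``proof'' is a one-line deferral to the reference [BaLe], where the standard smoothing procedure is carried out. What you have written is essentially that standard procedure made explicit -- a spectral (low-pass) cut-off $S_{N}$, the splitting $\|f-S_N f_\eps\|_s \le \|f-S_N f\|_s + \|S_N(f-f_\eps)\|_s$, the bounds $(1+N)^{-(r-s)/2}\|f\|_r$ and $(1+N)^{s/2}\eps$, and the balancing choice $N=N(\eps)\to\infty$ with $(1+N(\eps))^{s/2}\eps\to 0$ -- and all three estimates check out. Two small points deserve care. First, $\Gamma_1$ is an open arc of $\partial\Omega$, not a closed manifold, so the eigenbasis of the Laplace--Beltrami operator on $\Gamma_1$ is not immediately the right diagonalizing family for the spaces $H^t(\Gamma_1)$ defined by restriction; the clean fix is to work on the closed curve $\partial\Omega$ (extending $f_\eps$ by a bounded extension, or composing with the restriction operator of Remark~\ref{inv_ spur_dir_rand_st}), or to phrase the argument abstractly in a Hilbert scale, or to use mollifiers after a chart -- none of which changes the substance. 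Second, your $S=S_{N(\eps)}$ depends on $\eps$, i.e.\ it is a regularization strategy rather than a single fixed operator; this is indeed the intended reading (and the way the lemma is used in section~2.4), even though the quantifiers in the statement literally suggest one operator $S$ chosen before $\eps$. With those caveats noted, your proof supplies the content the paper only cites, which is a gain in self-containedness; the paper's approach buys brevity by outsourcing exactly this computation to [BaLe].
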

\begin{proof}
This lemma describes a standard procedure in inverse problems. A complete 
proof can be found in [BaLe].
\end{proof}

        After smoothing the data $f_\eps$, we obtain a corresponding 
$z_\eps \in {H^{\me}(\Gamma_l)'}$ such that $|| z_{f,g} - z_\eps ||_{H^{\me} 
(\Gamma_l)'} < \eps$.

        We analyze the choice of two different regularization strategies. 
The first one is a cut--off method, where we consider only the eigenvalues 
of $T_l$ lower than $1 - 1/n$.%
\footnote{We may suppose the real numbers $1 - 1/n$ are not eigenvalues of 
$T_l$.}
In the second method we use powers of $T_l$ to define $T_{\rm reg}$. For 
$n \geq 2$ we set
\begin{equation}
      A_n \ := \ \int_{0}^{1-\frac{1}{n}}{\, \lambda \, d E_\lambda}
\ \ \ \ \ \ \mbox{and} \ \ \ \ \ \
      B_n \ := \ \int_{0}^{1}{\, (\lambda - \lambda^n) \, d E_\lambda} \, ,%
\label{An_Bn_def} \end{equation}
\noindent  where $E_\lambda$ is the spectral family of $T_l$. Both operators 
$A_n$ and $B_n$ are positive, self adjoint and contractive. Let 
$T_{\rm reg}^{(n)}$ represent one of the families defined in (\ref{An_Bn_def}) 
and define $\overline{\vphi}$ and $\vphi^{(n)}$ as the fixed points of 
$$      \overline{\vphi} \ = \ T_l \, \overline{\vphi} + z_{f,g}
        \ \ \ \ \ \ \ \ {\rm and} \ \ \ \ \ \ \ \ 
        \vphi^{(n)} \ = \ T_{\rm reg}^{(n)} \, \vphi^{(n)} + z_\eps  $$
respectively. $\vphi^{(n)}$ will exist, as $T_{\rm reg}^{(n)}$ is 
contractive. We have now
\begin{eqnarray}
||\varphi^{(n)} \, - \, \overline{\vphi}||  & = &
       || T_l \, \overline{\vphi} + z_{f,g} - T_{\rm reg}^{(n)} \, \vphi^{(n)}
       - z_\eps ||  \nonumber  \\
& = & || T_{\rm reg}^{(n)}(\vphi^{(n)} - \overline{\varphi}) +
      \left( T_{\rm reg}^{(n)} - T_l \right) \, \overline{\vphi} -
      z_{f,g} + z_\eps ||  \nonumber  \\
& \leq & ||\left( I - T_{\rm reg}^{(n)} \right)^{-1} \,
         \left( T_{\rm reg}^{(n)} - T_l \right) \, \overline{\vphi}|| \ + \
         \eps \, ||\left( I - T_{\rm reg}^{(n)} \right)^{-1}|| .
                                            \label{abschaetz_regulariz_fehler}
\end{eqnarray}
        In next theorem we analyze the estimate 
(\ref{abschaetz_regulariz_fehler}) for the operators $A_n$ and $B_n$.

\begin{theorem}  If we define the family of operators $T_{\rm reg}^{(n)}$ 
using one of the families in (\ref{An_Bn_def}) we have
\begin{equation}
||\left( I - T_{\rm reg}^{(n)} \right)^{-1} \,
    \left( T_{\rm reg}^{(n)} - T_l \right) \, \overline{\vphi}|| \to 0
\ \ \ {\rm and} \ \ \
    ||\left( I - T_{\rm reg}^{(n)} \right)^{-1}|| \to \infty .
\label{abschaetz_regulariz_terme} \end{equation}
\label{diskrep_princ} \end{theorem}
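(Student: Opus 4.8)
The plan is to analyze each of the two families $T_{\rm reg}^{(n)} \in \{A_n, B_n\}$ separately via the spectral calculus for $T_l$, exploiting that $T_l$ is positive, self adjoint with $\sigma(T_l) \subset [0,1]$ and $1 \in \sigma_c(T_l)$ (Theorems~\ref{Tl_eigenschaft} and~\ref{Tl_norm}). For the cut-off family $A_n$, the spectral measure of $A_n$ is supported in $[0, 1-\tfrac1n]$, so $(I - A_n)^{-1}$ is bounded with $\|(I - A_n)^{-1}\| = \sup_{\lambda \le 1-1/n}(1-\lambda)^{-1} = n \to \infty$; this gives the second limit in~(\ref{abschaetz_regulariz_terme}). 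For $B_n$, the spectral function is $\lambda \mapsto \lambda - \lambda^n$, which on $[0,1]$ has supremum strictly below $1$ but tending to $1$ as $n \to \infty$ (the maximum is attained near $\lambda = 1$), so again $\|(I - B_n)^{-1}\| = \sup_{\lambda \in [0,1]}(1 - \lambda + \lambda^n)^{-1} \to \infty$ because $1 \in \sigma_c(T_l)$ forces the relevant supremum to be governed by values of $\lambda$ approaching $1$, where $1 - \lambda + \lambda^n \to 0$. So the divergence half of~(\ref{abschaetz_regulariz_terme}) reduces to a one-variable estimate of the scalar multiplier evaluated against the spectral family, which is routine.

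The more delicate half is showing $\|(I - T_{\rm reg}^{(n)})^{-1}(T_{\rm reg}^{(n)} - T_l)\,\overline{\vphi}\| \to 0$. First I would write this quantity in the spectral representation: with $d\mu(\lambda) := d\|E_\lambda \overline{\vphi}\|^2$ a finite Borel measure on $[0,1]$, the square of the norm equals $\int_0^1 g_n(\lambda)^2 \, d\mu(\lambda)$, where for the cut-off family $g_n(\lambda) = \mathbf{1}_{\{\lambda \le 1-1/n\}}\cdot\frac{\lambda - \lambda}{1-\lambda} + \mathbf{1}_{\{\lambda > 1-1/n\}}\cdot\frac{0 - \lambda}{1 - 0} = -\lambda\,\mathbf{1}_{\{\lambda > 1 - 1/n\}}$ (since on the retained part $A_n$ acts as $\lambda$ itself, the difference $A_n - T_l$ vanishes there), and for the power family $g_n(\lambda) = \frac{(\lambda - \lambda^n) - \lambda}{1 - (\lambda - \lambda^n)} = \frac{-\lambda^n}{1 - \lambda + \lambda^n}$. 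In both cases $|g_n(\lambda)| \le 1$ for every $\lambda \in [0,1]$ and $n$: for the cut-off family this is immediate; for the power family one checks $\lambda^n \le 1 - \lambda + \lambda^n$, i.e. $\lambda \le 1$. Moreover $g_n(\lambda) \to 0$ pointwise for each $\lambda \in [0,1)$ — trivially for the cut-off indicator, and because $\lambda^n \to 0$ while the denominator stays $\ge 1-\lambda > 0$ for the power family. The only point where pointwise convergence can fail is $\lambda = 1$, but $1 \in \sigma_c(T_l)$ (Theorem~\ref{Tl_norm}) means $1$ is not an eigenvalue, hence $\mu(\{1\}) = \|E_1\overline{\vphi} - E_{1^-}\overline{\vphi}\|^2 = 0$. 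Dominated convergence (dominating function the constant $1$, integrable against the finite measure $\mu$) then yields $\int_0^1 g_n^2 \, d\mu \to 0$, which is the first limit in~(\ref{abschaetz_regulariz_terme}).

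I would organize the write-up as: (1) recall the spectral family $E_\lambda$ of $T_l$ and that $\mu$ has no atom at $1$; (2) for $A_n$, compute $\|(I-A_n)^{-1}\| = n$ directly from the support of its spectral measure, and identify $g_n \equiv -\lambda\,\mathbf{1}_{\{\lambda > 1-1/n\}}$; (3) for $B_n$, verify via elementary one-variable calculus that $h_n(\lambda) := 1 - \lambda + \lambda^n$ satisfies $0 < h_n(\lambda) \le 1$ on $[0,1]$ with $\inf h_n \to 0$, giving $\|(I-B_n)^{-1}\| \to \infty$, and identify $g_n = -\lambda^n / h_n(\lambda)$; (4) in both cases observe $|g_n| \le 1$ and $g_n \to 0$ on $[0,1)$; (5) conclude by dominated convergence using the absence of an atom at $1$. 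The main obstacle — and the step deserving the most care — is justifying that the atom-free property at $\lambda = 1$ is exactly what rescues the convergence $\|(I - T_{\rm reg}^{(n)})^{-1}(T_{\rm reg}^{(n)} - T_l)\overline{\vphi}\| \to 0$ despite $\|(I - T_{\rm reg}^{(n)})^{-1}\|$ blowing up; this is the place where the ill-posedness structure of the problem (continuous spectrum at $1$) enters, and I would make it explicit that the argument would collapse if $1$ were an eigenvalue.
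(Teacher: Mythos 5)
Your argument is correct and sits in the same spectral--calculus framework as the paper: you arrive at exactly the paper's identities $(I-A_n)^{-1}(A_n-T_l)=-\int_{1-1/n}^{1}\lambda\,dE_\lambda$ and $(I-B_n)^{-1}(B_n-T_l)=\int_{0}^{1}\lambda^n(1+\lambda^n-\lambda)^{-1}\,dE_\lambda$, and in both treatments the divergence of the inverse norms comes from the spectral symbol blowing up as $\lambda\uparrow 1$. Where you genuinely differ is in the first limit: the paper handles $A_n$ by bounding the tail $\int_{1-1/n}^{1}d\langle E_\lambda\overline\vphi,\overline\vphi\rangle$ and handles $B_n$ by splitting the operator into ${\cal Q}_n+{\cal R}_n$ at the point $\mu(n)=n^{1/(1-n)}$, estimating ${\cal Q}_n$ in operator norm and the remainder on $\overline\vphi$; your single dominated--convergence argument (uniform bound $|g_n|\le 1$, pointwise $g_n\to 0$ on $[0,1)$, and no atom of $d\langle E_\lambda\overline\vphi,\overline\vphi\rangle$ at $\lambda=1$ because $1$ is not an eigenvalue, Theorem~\ref{Tl_eigenschaft}) treats both families at once and makes explicit the atom--free property that the paper uses only implicitly -- it also quietly repairs the paper's loose claim that $\|{\cal R}_n\|\to 0$, which is true only when ${\cal R}_n$ is applied to $\overline\vphi$, not in operator norm. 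One caution on the second limit: $\|(I-A_n)^{-1}\|$ equals the supremum of $(1-\lambda)^{-1}$ over $\sigma(T_l)\cap[0,1-\frac{1}{n}]$, not over the whole interval, so your equality $\|(I-A_n)^{-1}\|=n$ need not hold; the paper writes it as $(1-\Lambda(n))^{-1}$ with $\Lambda(n)$ the largest spectral value below $1-\frac{1}{n}$. The divergence therefore requires, for $A_n$ just as for $B_n$, that $\sigma(T_l)$ accumulates at $1$ from below, which follows from $1\in\sigma_c(T_l)$ (Theorem~\ref{Tl_norm}, $1$ being non-isolated since it is not an eigenvalue) -- the very fact you already invoke for $B_n$ -- so your conclusion stands once the $A_n$ norm is phrased the same way.
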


\begin{proof}
        {\it i)}  We analyze the case $T_{\rm reg}^{(n)} = A_n$ first. 
From the spectral decomposition of $T_l$ follows
$$ (I - A_n) \, = \, \int_{0}^{1-\frac{1}{n}}{\, (1 - \lambda) \, d E_\lambda}
                  + \int_{1-\frac{1}{n}}^{1}{\, d E_\lambda}
\ \ \ \ \ \ {\rm and} \ \ \ \ \ \ 
(A_n - T_l) \, = \, -\int_{1-\frac{1}{n}}^{1}{\, \lambda \, d E_\lambda} . $$
\noindent  Now these equalities imply $||(I - A_n)\, \vphi|| \geq 1/n \, 
||\vphi||$, and the operator $(I - A_n)$ has an inverse. We also know that 
$(I - A_n)$ is the identity operator on $Rg(A_n - T_l)$. From this follows
\begin{equation} (I - A_n)^{-1}(A_n - T_l) \ = \ 
                - \int_{1-\frac{1}{n}}^{1}{\, \lambda \, d E_\lambda} ,
\label{extra_equality} \end{equation}
\noindent  and we can estimate the first term in 
(\ref{abschaetz_regulariz_terme}) by
$$  ||(I - A_n)^{-1}(A_n - T_l) \, \overline{\vphi}||^2 \ \leq \
    \int_{1-\frac{1}{n}}^{1}{\,  d \langle E_\lambda \, \overline{\vphi} ,
                                       \overline{\vphi} \rangle} \ \to \ 0  $$
\noindent  for $n \rightarrow \infty$. For the second term in 
(\ref{abschaetz_regulariz_terme}) we use the identity
\begin{equation}
 ||(I - A_n)^{-1}|| \ = \ (1 - \Lambda(n))^{-1} \ \rightarrow \ \infty
\label{extra_equality2} \end{equation}
\noindent   for $n \rightarrow \infty$, where $\Lambda(n)$ is the largest 
eigenvalue of $T_l$, which is smaller than $(1 - 1/n)$.

        {\it ii)}  For the case $T_{\rm reg}^{(n)} = B_n$ we have $(B_n - 
T_l) \ = -\, T_l^n$ and from the spectral decomposition of $T_l$ follows
$$  (I - B_n)^{-1}(B_n - T_l) \ = \ \int_{0}^{1}{
          \frac{\lambda^n}{(1 + \lambda^n - \lambda)} d E_\lambda} .  $$
\noindent  If we define the functions $\mu(n) := n^{(\frac{1}{1-n})}$ \ and \ 
$\delta(n) := 1 - \mu(n)$ we can decompose the operator $(I - B_n)^{-1} 
(B_n - T_l)$ in ${\cal Q}_n + {\cal R}_n$ where
$$ {\cal Q}_n \ = \ \int_{0}^{1-\delta(n)}{\, \frac{\lambda^n}{(1 + \lambda^n 
                 - \lambda)} \,  d E_\lambda}
\ \ \ \ \ \ \ {\rm and} \ \ \ \ \ \ \ 
   {\cal R}_n \ = \ \int_{1-\delta(n)}^{1}{\, \frac{\lambda^n}{(1 + \lambda^n 
                 - \lambda)} \, d E_\lambda} \, .                           $$
\noindent  From the convergence $\mu^n(n) \, (1 + \mu^n(n) - \mu(n))^{-1} 
\rightarrow 0$ for $n \rightarrow \infty$ follows $\lim {||{\cal Q}_n||} = 
0$. The convergence $||{\cal R}_n|| \rightarrow 0$ follows from inequality 
$0 < \lambda^n \, (1 + \lambda^n - \lambda)^{-1} \leq 1,\ \forall\ \lambda 
\in [1-\delta(n),1], \ \forall n \geq 2$. With this we have proved
$$    \lim_{n \rightarrow \infty}{\, ||(I - B_n)^{-1}(B_n - T_l) \, 
                                            \overline{\vphi}||} \ = \ 0.    $$
        To obtain the second limit in (\ref{abschaetz_regulariz_terme}) we 
deduce from the spectral decomposition of $(I-B_n)^{-1}$ the equality
$$  ||(I - B_n)^{-1}|| \ = \ (1 + \Upsilon^n(n) - \Upsilon(n))^{-1} \
                                                      \rightarrow \ \infty  $$
\noindent  for $n \rightarrow \infty$, where $\Upsilon(n)$ is the largest 
eigenvalue of $T_l$, which is smaller than $\mu(n)$.
\qquad\end{proof}

        Our next step it to use {\em a priori} information about the solution 
$\overline{\vphi}$ of the fixed point equation $T\, \vphi = \vphi$ in order 
to find an optimal regularization strategy. Let's suppose there exists a 
function  $G$ with
\begin{equation}
\left\{ \renewcommand{\arraystretch}{2} \begin{array}{l}
        G : [0,1) \mapsto \R^+ \ \mbox{is continuous and monotone increasing}; \\
        {\displaystyle
        \lim_{\lambda \rightarrow 1^-} {G(\lambda)} \ = \ \infty; } \\
        {\displaystyle
        \int_{0}^{1} {G^2(\lambda) \ d \langle E_\lambda\, \overline{\varphi},
                \overline{\varphi} \rangle} \ = M^2 \ < \ \infty. }
        \end{array} \right.
\label{regular_annahme} \end{equation}
\noindent  In the next theorem we analyze how the regularity condition in 
(\ref{regular_annahme}) can be used to balance the approximation and 
regularization errors.

\begin{theorem}  Let $G$ be a function which satisfy (\ref{regular_annahme}) 
and $\delta$, $\mu$, $\Lambda$, $\Upsilon$ be the functions used in the proof 
of Theorem~\ref{diskrep_princ}.  For the two regularization strategies 
in (\ref{An_Bn_def}) there exists $n_{\rm opt} \in \N$, such that 
$$ ||\vphi^{(n_{\rm opt})} \, - \, \overline{\vphi}|| \ \leq \ 
                                ||\vphi^{(n)} \, - \, \overline{\vphi}|| ,  $$
\noindent  for every $n \in \N$. Further $n_{\rm opt}$ is obtained by solving 
the minimization problem
$$  \min_{n \geq 2} \ \left\{ \frac{M}{G(1-\frac{1}{n})} \ + \
                             \frac{\eps}{(1 - \Lambda(n))} \right\}  $$
\noindent  for the regularization strategy using $A_n$. For the $B_n$ 
regularization strategy $n_{\rm opt}$ is obtained as the solution of the 
minimization problem
$$  \min_{n \geq 2} \ \left\{ \left(
\frac{\mu^n(n) \, ||\overline{\vphi}||} {1 + \mu^n(n) - \mu(n)} \, + \,
\frac{M}{G(\mu(n))}             \right) \ + \
\frac{\eps}{(1 + \Upsilon^n(n) - \Upsilon(n))} \right\} .             $$
\label{regul_strateg_opt} \end{theorem}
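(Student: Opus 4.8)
The plan is to start from the error estimate (\ref{abschaetz_regulariz_fehler}), which already splits $\|\vphi^{(n)} - \overline{\vphi}\|$ into an \emph{approximation term} $\|(I - T_{\rm reg}^{(n)})^{-1}(T_{\rm reg}^{(n)} - T_l)\overline{\vphi}\|$ and a \emph{regularization (noise) term} $\eps\,\|(I - T_{\rm reg}^{(n)})^{-1}\|$. For each of the two strategies I would give an explicit upper bound on the approximation term in terms of the function $G$, using the \emph{a priori} hypothesis (\ref{regular_annahme}), and then combine it with the explicit expressions for $\|(I - T_{\rm reg}^{(n)})^{-1}\|$ already computed in the proof of Theorem~\ref{diskrep_princ} (namely $(1-\Lambda(n))^{-1}$ for $A_n$ and $(1 + \Upsilon^n(n) - \Upsilon(n))^{-1}$ for $B_n$). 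This yields, in each case, an upper bound of the form $F_n(\eps, M)$ for $\|\vphi^{(n)} - \overline{\vphi}\|$, where $F_n$ is exactly the expression inside the stated minimization problem. Minimising $F_n$ over $n \ge 2$ then defines $n_{\rm opt}$, and the inequality $\|\vphi^{(n_{\rm opt})} - \overline{\vphi}\| \le \|\vphi^{(n)} - \overline{\vphi}\|$ follows for all $n$ because the left-hand side is bounded by $F_{n_{\rm opt}} = \min_n F_n \le F_n$, and — to make this a genuine bound on the true error — one argues that the minimum over $n$ of the \emph{upper bounds} dominates the true error for the optimal index; more precisely I would phrase the theorem (and this is the honest reading) as: $n_{\rm opt}$ minimises the provable error bound, and for that choice the bound tends to zero as $\eps \downarrow 0$.

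For the $A_n$ strategy the key computation is: using (\ref{extra_equality}), $(I - A_n)^{-1}(A_n - T_l)\overline{\vphi} = -\int_{1-1/n}^{1} \lambda \, dE_\lambda \overline{\vphi}$, so its norm squared is at most $\int_{1-1/n}^{1} d\langle E_\lambda\overline{\vphi}, \overline{\vphi}\rangle$. I would bound this last integral using (\ref{regular_annahme}): since $G$ is monotone increasing, for $\lambda \ge 1 - 1/n$ we have $G(\lambda) \ge G(1-\tfrac1n)$, hence
\[
\int_{1-\frac1n}^{1} d\langle E_\lambda\overline{\vphi}, \overline{\vphi}\rangle
\ \le\ \frac{1}{G(1-\frac1n)^2}\int_{1-\frac1n}^{1} G^2(\lambda)\, d\langle E_\lambda\overline{\vphi}, \overline{\vphi}\rangle
\ \le\ \frac{M^2}{G(1-\frac1n)^2}.
\]
Combining with (\ref{extra_equality2}) gives $\|\vphi^{(n)} - \overline{\vphi}\| \le M/G(1-\tfrac1n) + \eps/(1-\Lambda(n))$, which is the $A_n$ functional. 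For the $B_n$ strategy one proceeds analogously, but now the approximation term $(I - B_n)^{-1}(B_n - T_l)\overline{\vphi} = \int_0^1 \frac{\lambda^n}{1+\lambda^n-\lambda}\, dE_\lambda\overline{\vphi}$ is split at $\mu(n) = n^{1/(1-n)}$ exactly as in the proof of Theorem~\ref{diskrep_princ}: on $[0, \mu(n)]$ the spectral weight $\lambda^n/(1+\lambda^n-\lambda)$ is bounded by $\mu^n(n)/(1+\mu^n(n)-\mu(n))$, contributing at most that factor times $\|\overline{\vphi}\|$; on $[\mu(n),1]$ the weight is $\le 1$ and one pulls out $1/G(\mu(n))$ as above, contributing at most $M/G(\mu(n))$. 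Together with $\|(I-B_n)^{-1}\| = (1+\Upsilon^n(n)-\Upsilon(n))^{-1}$ this gives the second functional.

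The existence of $n_{\rm opt}$ is then the easy part: in both cases the functional $F_n$ is a sum of a term that $\to 0$ as $n \to \infty$ (the $M/G$ part, since $G(\lambda) \to \infty$ as $\lambda \to 1^-$, together with the $\mu^n(n)$-term in the $B_n$ case) and a term that $\to \infty$ as $n \to \infty$ (the noise term, by Theorem~\ref{diskrep_princ}); since $\{F_n\}_{n\ge2}$ is a sequence of positive reals with $\liminf$ attained (it diverges, so the infimum is a minimum), there is an index $n_{\rm opt}$ realising $\min_{n\ge2} F_n$. I expect the main subtlety — rather than a deep obstacle — to be the precise statement: one is minimising a provable upper bound, not the exact error, so the inequality $\|\vphi^{(n_{\rm opt})} - \overline{\vphi}\| \le \|\vphi^{(n)} - \overline{\vphi}\|$ should be read as holding for the bounds; I would either state it this way or note that both sides are controlled by the same $F$ and that $F$ depends monotonically on the balance of the two error contributions. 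The only genuinely technical point is verifying that $\mu^n(n)/(1+\mu^n(n)-\mu(n)) \to 0$ and that $\mu(n) \to 1^-$, so that $G(\mu(n)) \to \infty$ — but these elementary limits were already established in the proof of Theorem~\ref{diskrep_princ}, so they may be invoked directly.
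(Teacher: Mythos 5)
Your proposal follows essentially the same route as the paper: the paper likewise starts from (\ref{abschaetz_regulariz_fehler}), uses (\ref{extra_equality}) together with the monotonicity of $G$ in (\ref{regular_annahme}) to bound the approximation term by $M/G(1-\tfrac{1}{n})$, combines this with (\ref{extra_equality2}) to obtain exactly the $A_n$ functional, and dismisses the $B_n$ case as analogous --- which you carry out precisely as the paper's proof of Theorem~\ref{diskrep_princ} suggests. Your closing caveat, that one is really minimising the provable upper bound rather than the true error, is a looseness shared by the paper's own ``and the assertion follows,'' so it marks no divergence in method.
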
  

\begin{proof}
        We show here only the proof for the regularization strategy $A_n$, the 
second case being analog. From (\ref{extra_equality}) and 
(\ref{regular_annahme}) follows
\begin{eqnarray*}
  ||(I - A_n)^{-1}(A_n - T_l) \, \overline{\vphi}||^2 & = &
  \int_{1-\frac{1}{n}}^{1} {\lambda^2 \ d \langle E_\lambda \, 
                            \overline{\vphi} , \overline{\vphi} \rangle} \\
& \leq & \frac{1}{G^2(1-\frac{1}{n})} \,
  \int_{1-\frac{1}{n}}^{1} {G^2(\lambda) \ d \langle E_\lambda \, 
                            \overline{\varphi} , \overline{\vphi} \rangle} \\
& \leq & \frac{M^2}{G^2(1-\frac{1}{n})}.
\end{eqnarray*}
\noindent  Now inequality (\ref{abschaetz_regulariz_fehler}) and 
(\ref{extra_equality2}) imply
$$  ||\vphi^{(n)} \, - \, \overline{\vphi}|| \ \leq \
           \frac{M}{G(1-\frac{1}{n})} \ + \ \frac{\eps}{(1 - \Lambda(n))},  $$
\noindent   and the assertion follows.
\qquad\end{proof}

\begin{remark}  One can interpret the regularity condition in 
(\ref{regular_annahme}) as follows: With the aid of the function $G$ one can 
define the unbounded operator
$$         {\cal G} \ = \ \int_{0}^{1}{\, G(\lambda) \, d E_\lambda}        $$
\noindent   on $H^{\me}_{00}(\Gamma_2)'$. The existence of the integral in 
(\ref{regular_annahme}) is equivalent to the assumption that $\overline 
{\vphi}$ belongs to $D({\cal G})$, the domain of ${\cal G}$ defined by
$$  D({\cal G}) \ := \ \left\{ \varphi \in H^{\me}_{00}(\Gamma_2)' \ | \ 
                 {\cal G}(\varphi) \in H^{\me}_{00}(\Gamma_2)' \right\} .   $$
\end{remark}

\section{Numerical experiments}

        In this section we present some results obtained by the numerical 
implementation of the iterative algorithm. In the first two examples in 
sections~3.1 and 3.2 respectively we solve linear consistent problems 
in a square and in a annular domain. In section~3.3 we exhibit a linear 
inconsistent problem and in section~3.4 we analyze a non linear 
consistent problem.

        The computation was performed on the IBM--RISC/6000 machines at 
the Federal University of Santa Catarina. The elliptic mixed boundary 
value problems that appear in the iteration were solved using the PLTMG 
package (see [Ban]).

\subsection{A linear problem in a square domain}

        In this example we take $\Omega = (0,1)\times(0,3/4)$ and decompose 
the boundary $\partial\Omega$ in  $\Gamma_1 \cup\Gamma_2 \cup \Gamma_3 \cup 
\Gamma_4$, where
$$         \Gamma_1 := \{ (x,0) ; x \in (0,1) \}\, , \ \ \ \ \ \ 
           \Gamma_2 := \{ (x, 3/4) ; x \in (0,1) \}\, ,       $$
$$        \Gamma_3 := \{ (0,y) ; y \in (0,3/4) \}\, , \ \ \ \,
          \Gamma_4 := \{ (1,y) ; y \in (0,3/4) \}\, .    $$
\noindent  Given the Cauchy data\, $f(x) = \sin(\pi x)$\, and\, $g(x) = 0$\, 
at $\Gamma_1$ we reconstruct the (Dirichlet) trace at $\Gamma_2$ of the 
solution of following Cauchy Problem:
$$  \left\{ \begin{array}{ccl}
            \Delta \, u  & = & 0 \ ,\ \mbox{ in } \Omega \\
            u            & = & f \ ,\ \mbox{ at } \Gamma_1 \\
            u_{\nu}      & = & g \ ,\ \mbox{ at } \Gamma_1 \\
            u            & = & 0 \ ,\ \mbox{ at } \Gamma_3 \cup \Gamma_4
            \end{array} \right. \, .                                        $$
\noindent  The exact solution of this Cauchy Problem is $u^\ast(x,y) = 
\cosh(\pi y) \, \sin(\pi x)$.

         Each mixed problem is solved using an uniform mesh with 262\,913 
nodes and linear elements. The trace at $\Gamma_2$ of the sequence generated 
by (IT) is shown (solid line) after 10, 25, 50 and 100 steps at 
Figure~\ref{fig2}. The dotted line represent the trace of the exact solution 
$u^\ast$ at $\Gamma_2$.

        As a stopping criterion we choose $||\psi_{k+1} - \psi_{k}||_{\infty; 
\Gamma_2} \leq 10^{-3}$. In this example the iterative sequence converges 
extremely fast. We observe a slower rate of convergence when the mesh is 
refined, but the approximation obtained with the same stopping criterion is 
more accurate.

\begin{figure}
\centerline{\epsfxsize3.3cm \epsfysize4cm \epsfbox{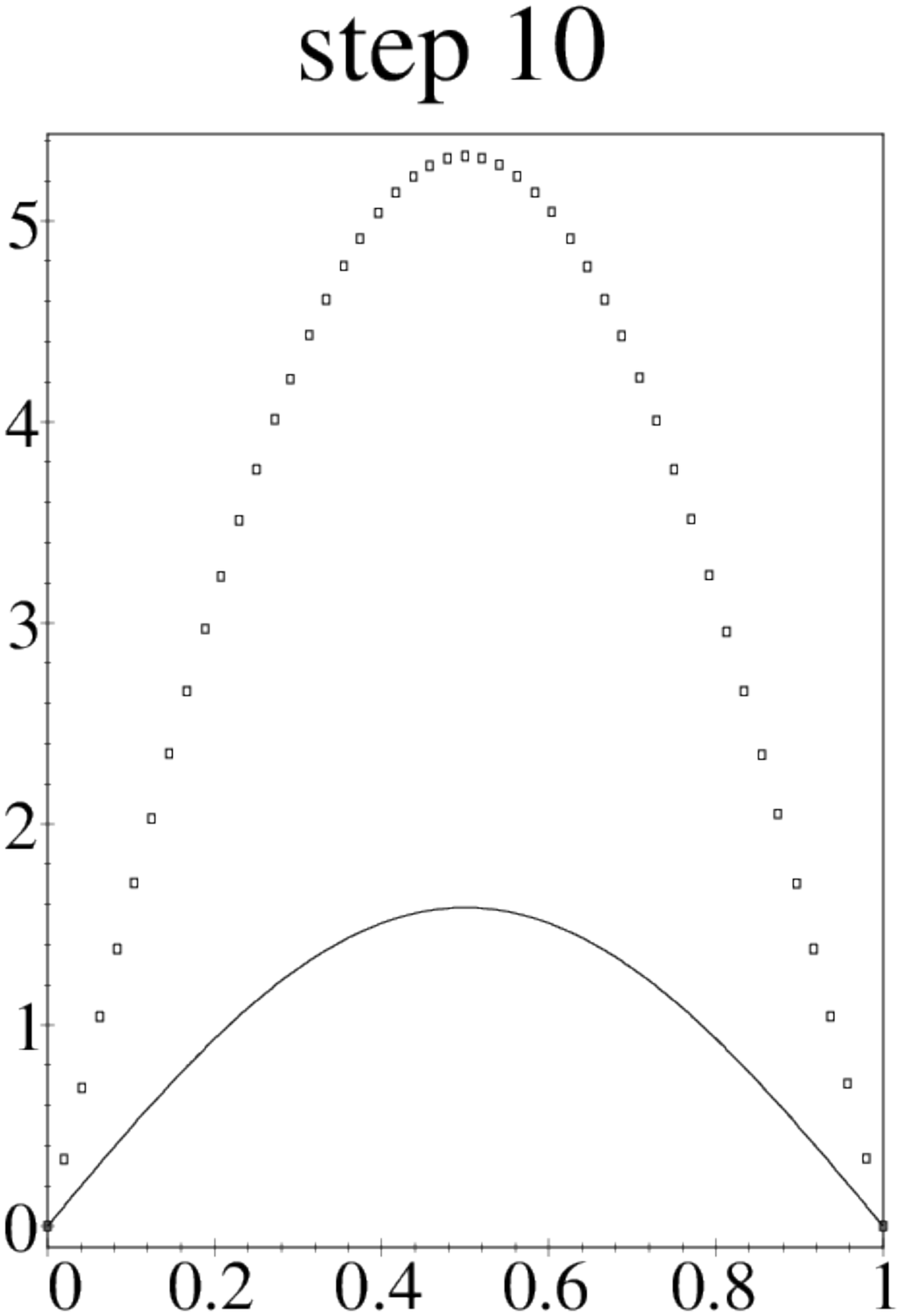}
            \epsfxsize3.3cm \epsfysize4cm \epsfbox{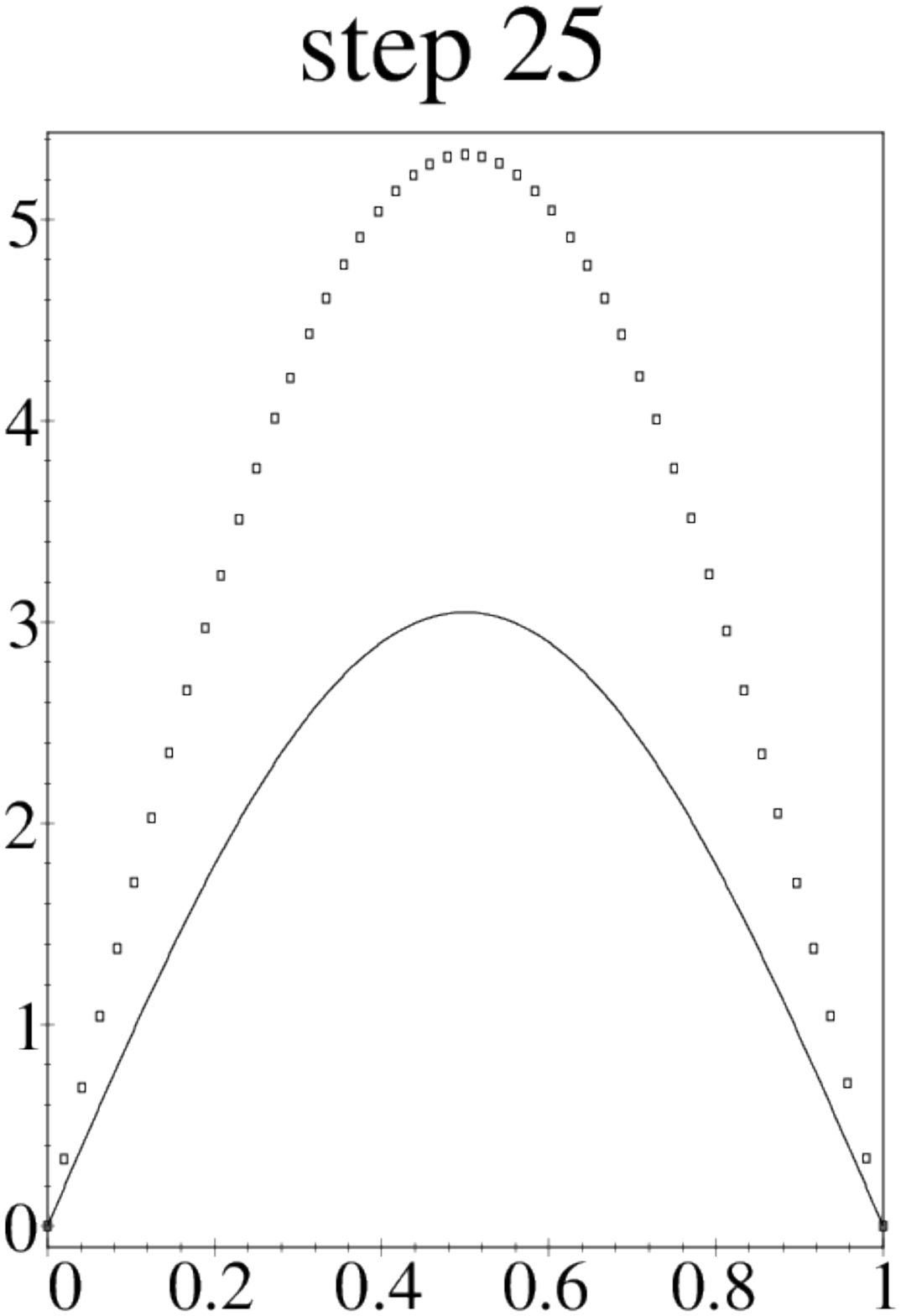}
            \epsfxsize3.3cm \epsfysize4cm \epsfbox{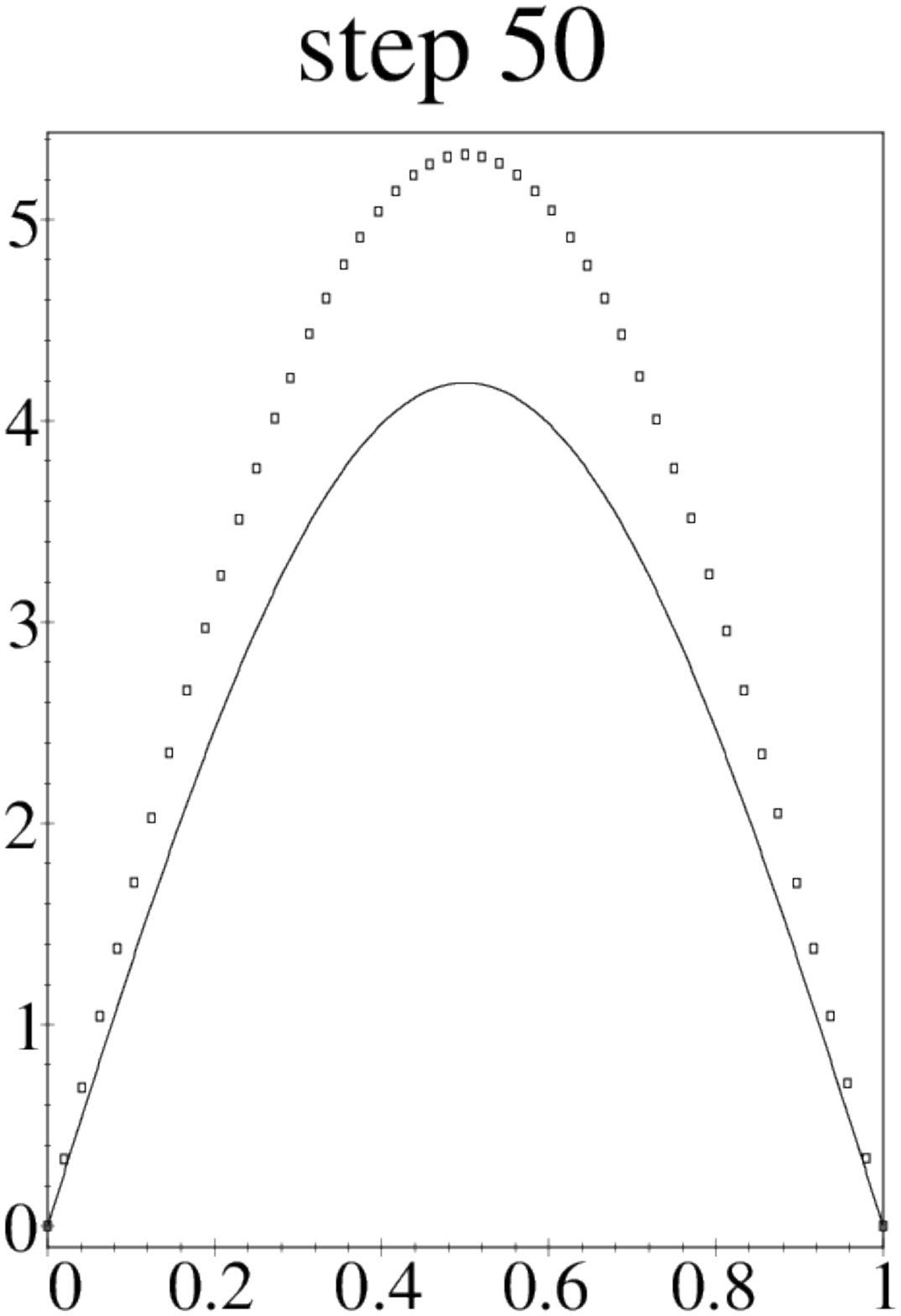}
            \epsfxsize3.3cm \epsfysize4cm \epsfbox{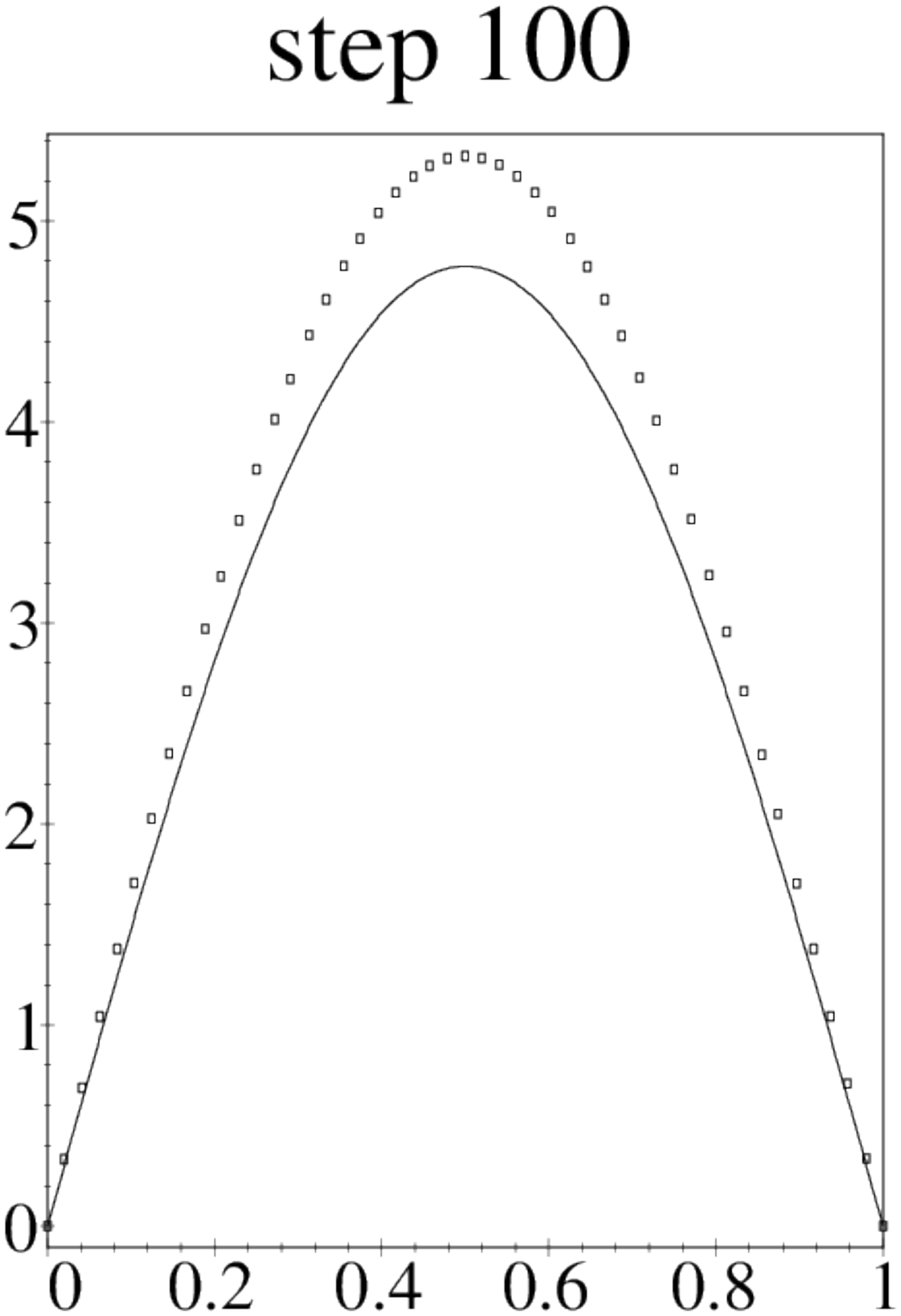} }
\caption{Iterated sequence at the unknown boundary for a linear Cauchy 
problem at the domain $\Omega = (0,1) \times (0,3/4)$ \label{fig2}}
\end{figure}

\subsection{A linear problem in an annular domain}

        In this second example $\Omega$ is an annulus centered at the 
origin with inner and outer radius respectively 1 and 7. Given the Cauchy 
data\, $f(\theta) = \sin(\theta)$\, and\, $g(\theta) = 0$\, at the inner 
boundary $\Gamma_1$, we reconstruct at the outer boundary $\Gamma_2$ the 
trace of the solution of following problem:
$$  \left\{ \begin{array}{ccl}
            \Delta \, u  & = & 0 \ ,\ \mbox{ in } \Omega \\
            u            & = & f \ ,\ \mbox{ at } \Gamma_1 \\
            u_{\nu}      & = & g \ ,\ \mbox{ at } \Gamma_1 \\
            \end{array} \right. \, .                                        $$
\noindent  The exact solution of this problem is $u^\ast(x,y) = (r+1/r) 
\sin(\theta)/2$. We used a finite element mesh with 61\,824 nodes, linear 
elements and the stopping criterion $||\psi_{k+1} - \psi_{k}||_{\infty; 
\Gamma_2} \leq 10^{-4}$. The dotted line in Figure~\ref{fig3} represents 
the exact solution (note that the $x$--axis is parameterized from zero to 
$2\pi$) and the solid line represents the sequence $\psi_k$ generated by (IT).

\begin{figure}
\centerline{ \epsfxsize3.3cm \epsfysize4cm \epsfbox{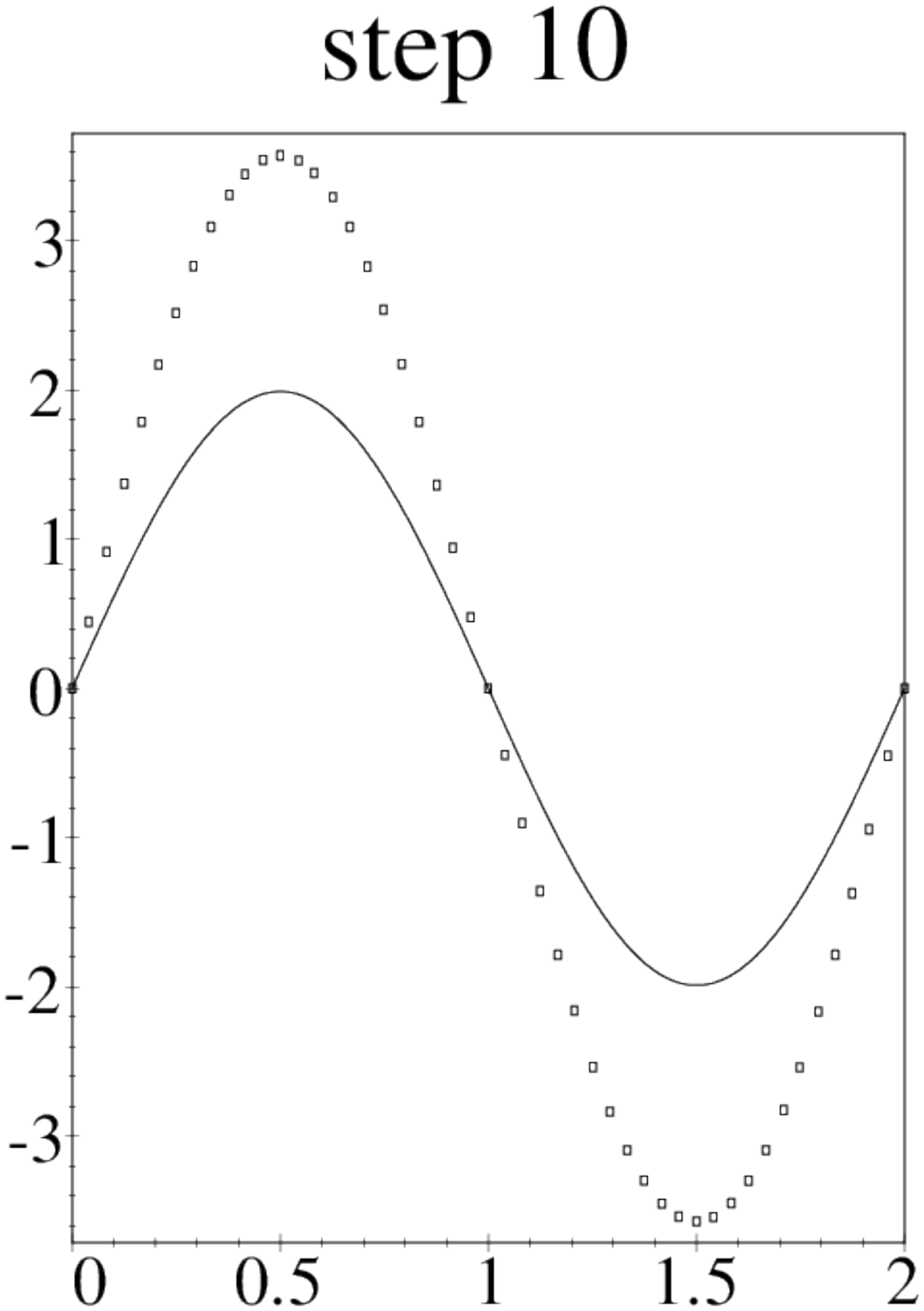}
             \epsfxsize3.3cm \epsfysize4cm \epsfbox{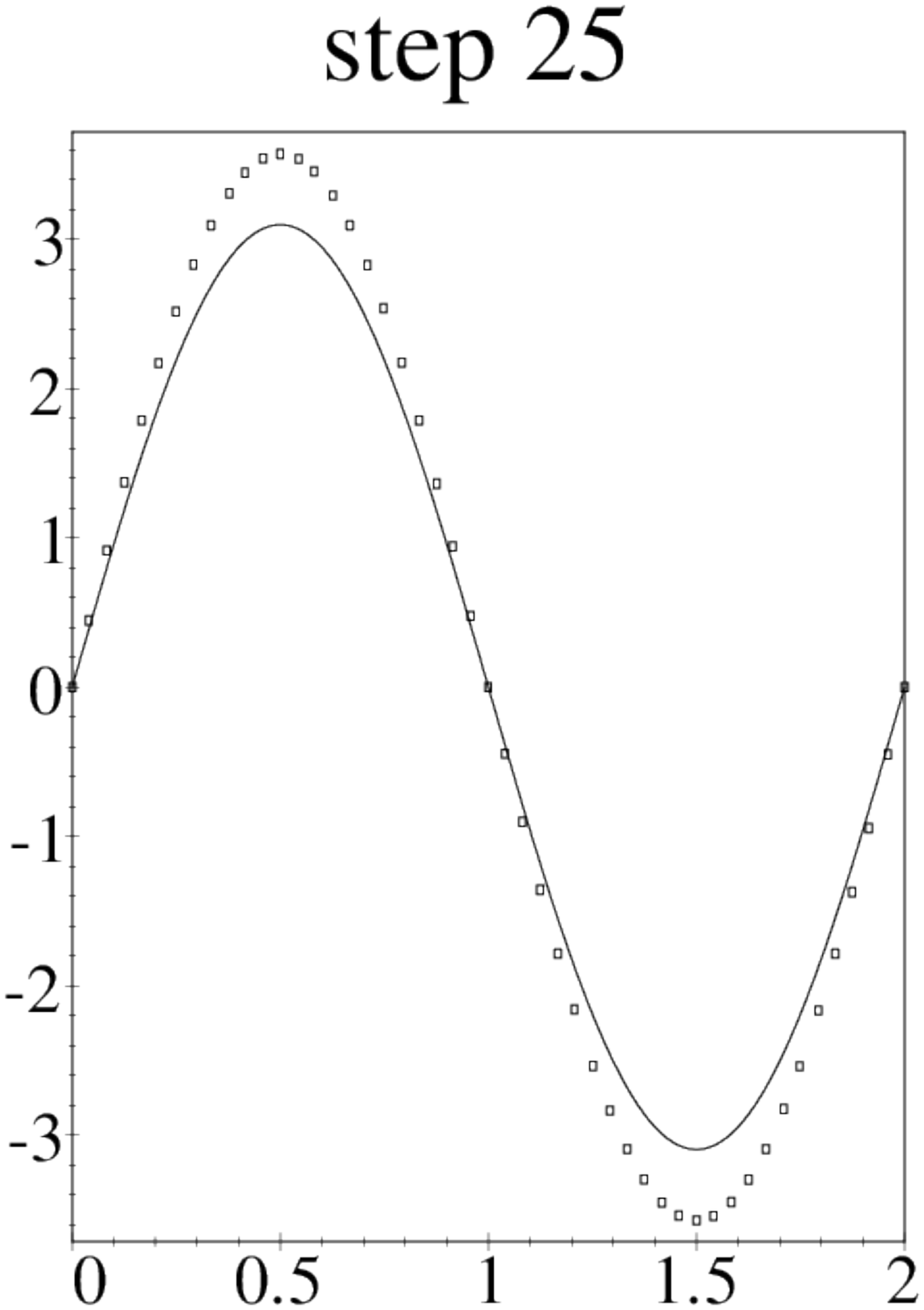}
             \epsfxsize3.3cm \epsfysize4cm \epsfbox{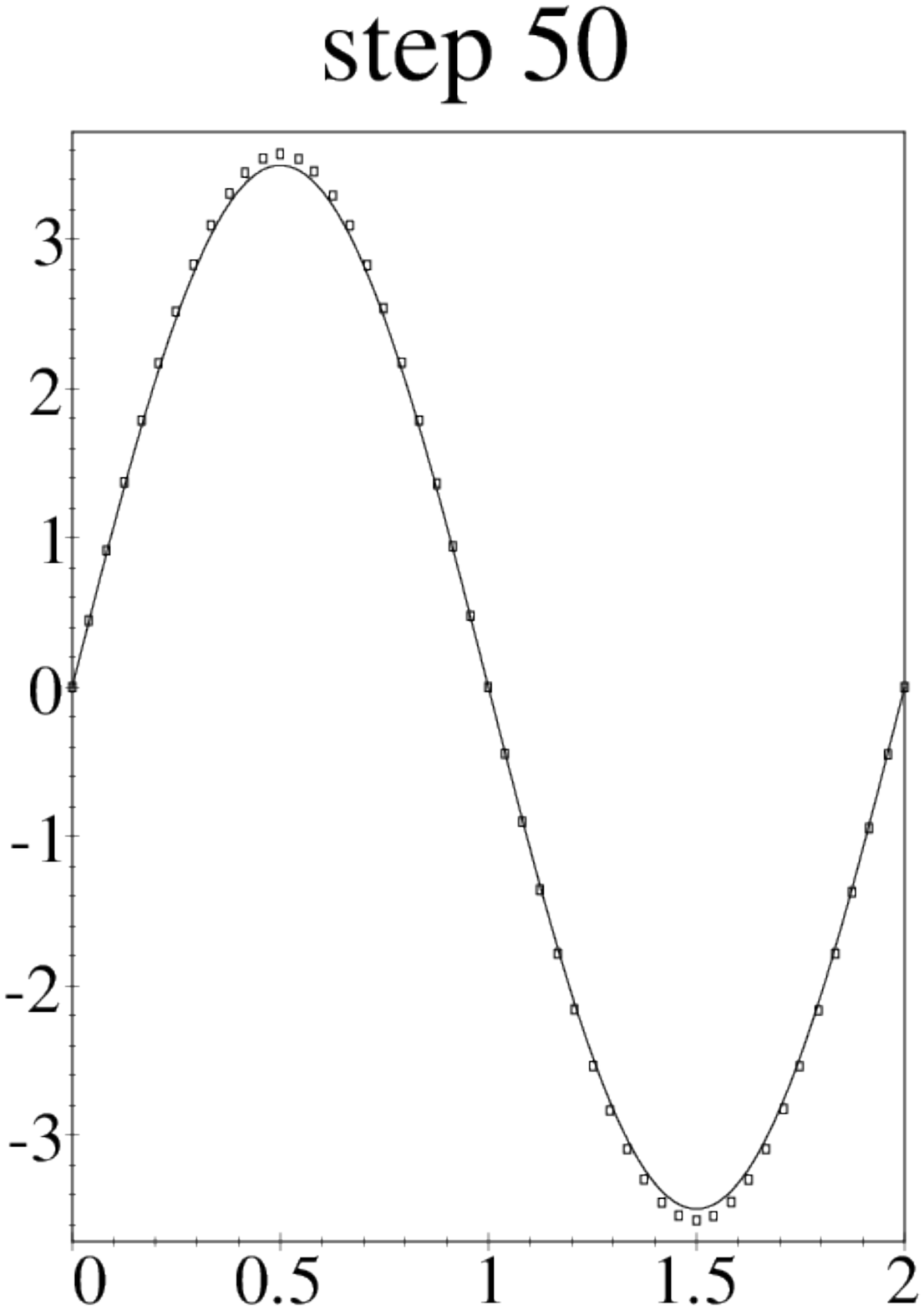}
             \epsfxsize3.3cm \epsfysize4cm \epsfbox{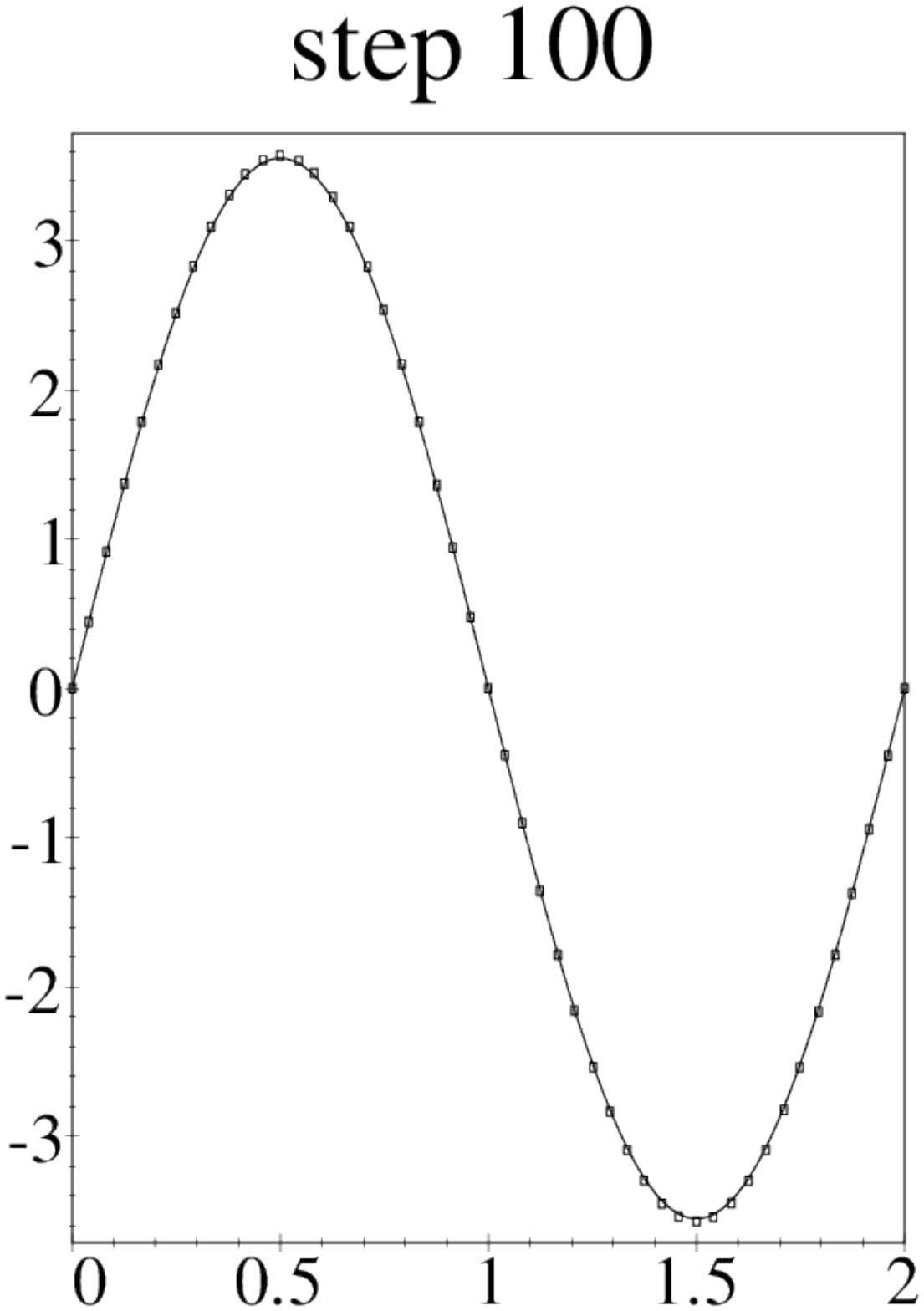} }
\caption{Iterated sequence at the unknown boundary for a linear Cauchy 
problem at a ring domain with inner and outer radius respectively $1$ and $7$ 
\label{fig3}}
\end{figure}

        The iteration gives for this Cauchy problem a better approximation 
than it does in section~3.1. The reason for this behavior is that the 
eigenvalues of the operator $T_l$ (defined in (\ref{Tl_def})) converge to 
one slower, i.e., they are smaller than we estimated in section~3.1 (see 
Figure~\ref{fig1}).

\subsection{A linear inconsistent problem}

        In this example we take $\Omega = (0,1)\times(0,1/2)$ and define the 
boundary segments:
$$         \Gamma_1 := \{ (x,0) ; x \in (0,1) \}\, , \ \ \ \ \ \ 
           \Gamma_2 := \{ (x, 1/2) ; x \in (0,1) \   $$
$$        \Gamma_3 := \{ (0,y) ; y \in (0,1/2) \}\, , \ \ \ \,
          \Gamma_4 := \{ (1,y) ; y \in (0,1/2) \} .  $$
\noindent  For $n \in \N$ we define at $\Gamma_1$ the functions:
$$        f(x) =    0 \,  \ \ \ \ \ \ \ \ {\rm and} \ \ \ \ \ \ \ \ 
          g(x) = \left\{ \begin{array}{ll} 
                 n - n^2 |x-\pi/2| & ,\ |x-\pi/2| \leq 1/n \\
                 0                 & ,\ {\rm otherwise}
                 \end{array} \right. .
$$
\noindent  Using the reflection principle of Schwartz (see [GiTr] or [Le]) 
one proves that the Cauchy problem
$$  \left\{ \begin{array}{ccl}
            \Delta \, u  & = & 0 \ ,\ \mbox{ in } \Omega \\
            u            & = & f \ ,\ \mbox{ at } \Gamma_1 \\
            u_{\nu}      & = & g \ ,\ \mbox{ at } \Gamma_1
            \end{array} \right. ,  $$
\noindent  has an analytical solution only if $g$ is itself analytical. So 
with our choice of data we know a priori that the respective Cauchy problem 
has no classical solution.

        We take $n=100$ in the definition of $g$ and add to the Cauchy 
problem above the over determinating condition: $u = 0$ at $\Gamma_3 \cup 
\Gamma_4$. The iteration is performed as before over a 262\,913 node mesh 
using linear elements and the same stopping criterion as in section~3.1. 
In Figure~\ref{fig4} the difference $\vphi_k - \vphi_{k-1}$ is plotted for 
some values of $k$.

\begin{figure}
\centerline{ \epsfxsize5cm\epsfysize5cm \epsfbox{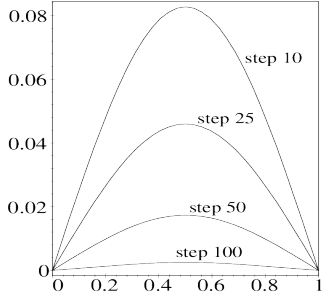} }
\caption{Difference $(\vphi_k - \vphi_{k-1})$ calculated for a linear 
inconsistent Cauchy problem \label{fig4}}
\end{figure}
%
        The sequence $\vphi_k$ converges in the $||\cdot||_\infty$ as fast as 
it does in section~3.1, but this does not mean that it converges to a 
solution of the Cauchy problem (see Remark~\ref{contraction}).

       Both sequences $w_k$ and $v_k$ of $H^1$--functions generated in (IT) 
converge to the solution of the Cauchy problem, when this problem does have a 
solution (see Theorem~\ref{Tk_konverg}). In this example, when we analyse the 
sequences $w_k$ and $v_k$, we note that they are not converging in 
$H^1(\Omega)$ to the same limit. In Figures~5~(a) and 5~(b) 
we show the functions $w_{100}$ and $v_{100}$ respectively.

        One observes that on $\Gamma_2$ we have\, $w_{100} \simeq v_{100}$ 
and\, $(w_{100})_\nu \simeq (v_{100})_\nu$. The difference $w_k - v_k$ 
generates also a sequence of $H^1$--functions with vanishing Cauchy data at 
$\Gamma_2$ but that does not converge to zero at $\Omega$. Such examples are 
known to exist due to Hadamard (see section~1.2).
\bigskip \bigskip

\noindent { \epsfxsize6.0cm \epsfbox{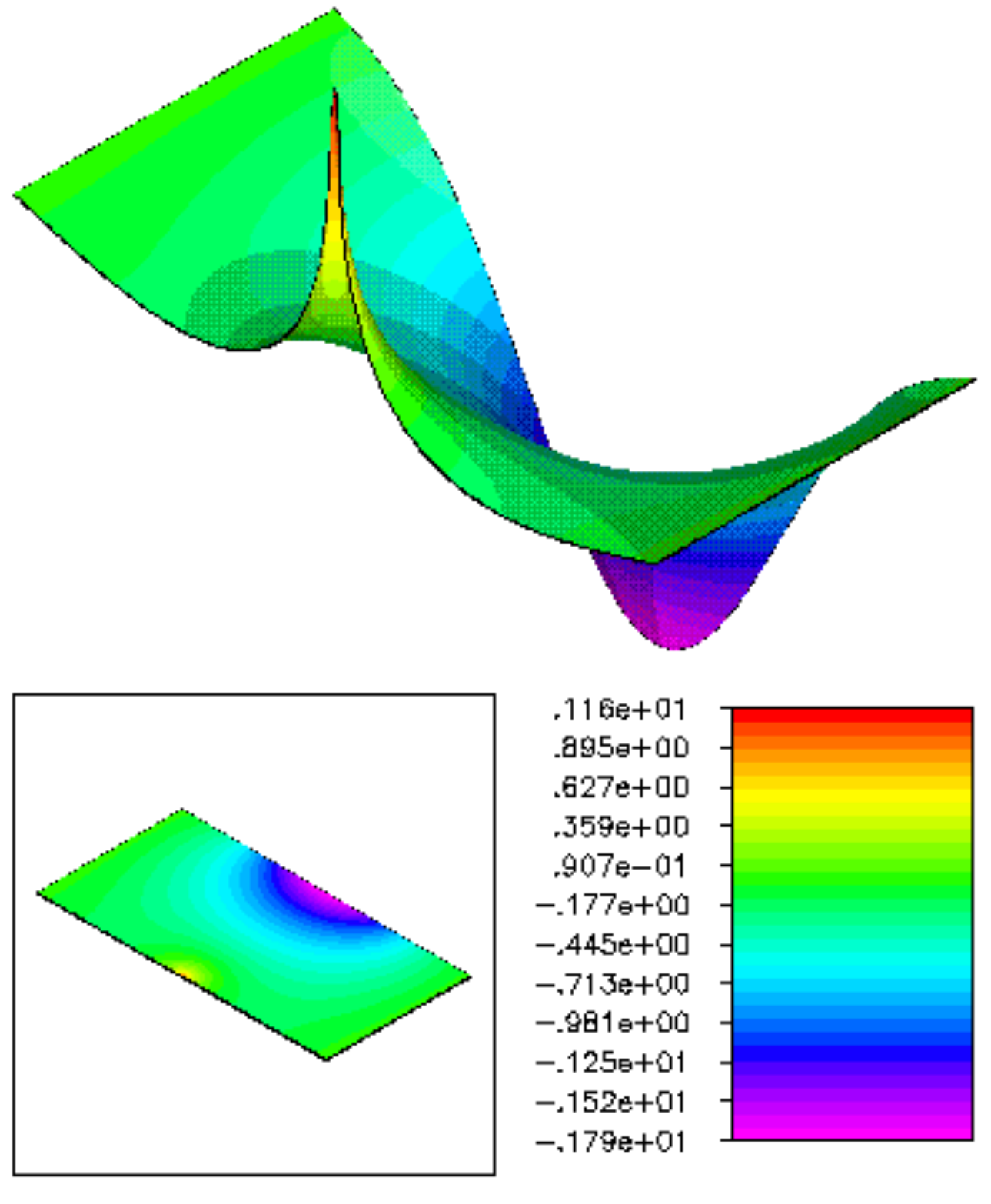} \hfill
            \epsfxsize6.0cm \epsfbox{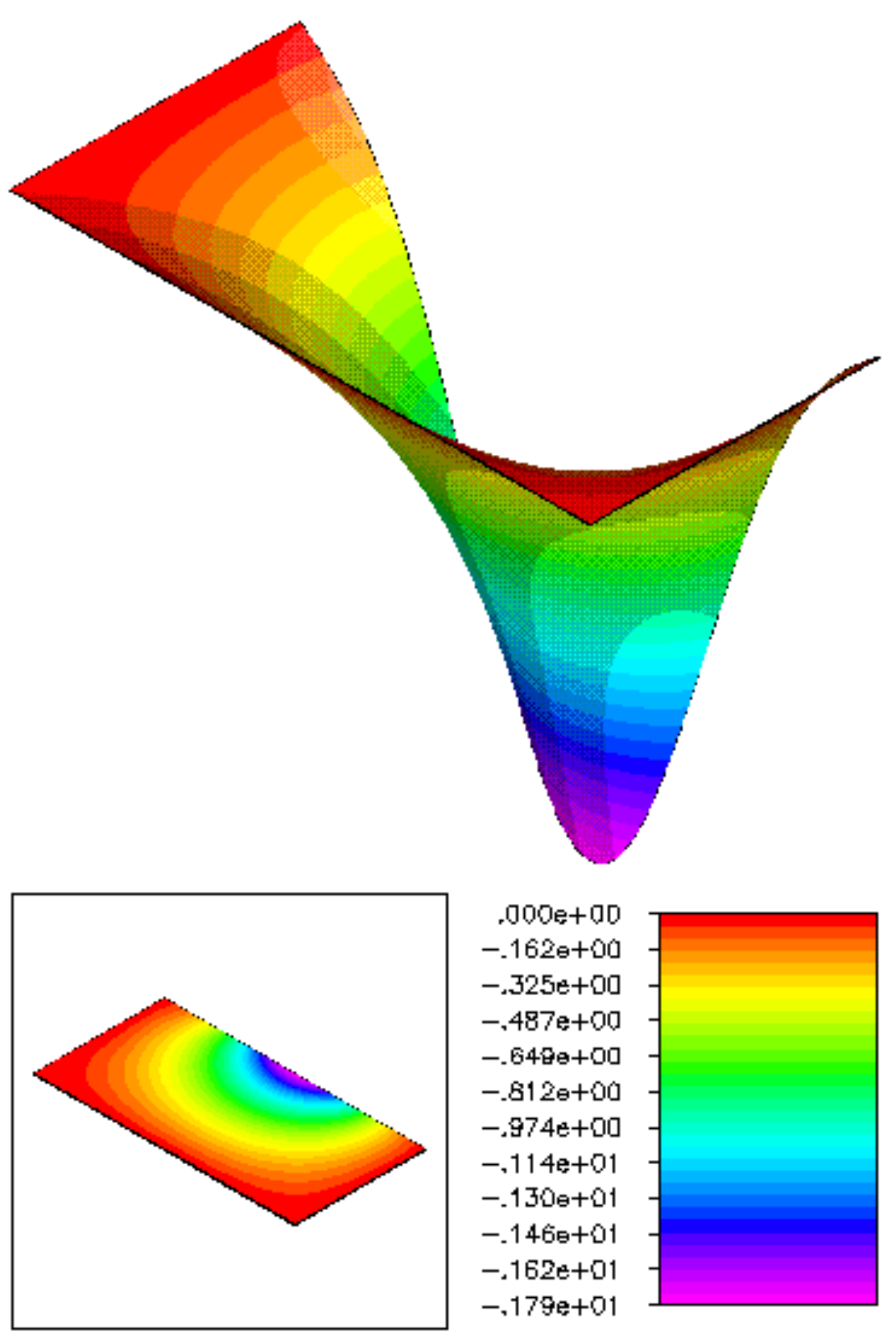} }

\centerline{\rm (a) \hskip6.5cm (b)}
\medskip

\noindent \begin{minipage}{14cm}
Figure~5: Functions $w_k$ and $v_k$ -- at (a) and (b) respectively -- after 
$100$ steps for a linear inconsistent Cauchy problem
\end{minipage}

\subsection{A non linear problem}

        Take $\Omega$ the annulus centered at the origin with inner and outer 
radius respectively 1/2 and 1. This time we decompose the outer component of 
the boundary in two different ways:  $\partial\Omega = \Gamma_1 \cup \Gamma_2
= \tilde{\Gamma_1} \cup \tilde{\Gamma_2}$ where
$$        \Gamma_1 := \{ (x,y)\, |\ x^2 + y^2 = 1, \ x<0 \} ,
\ \ \
          \Gamma_2 := \{ (x,y)\, |\ x^2 + y^2 = 1, \ x>0 \}  $$
\noindent and
$$ \tilde{\Gamma_1} := \{(x,y)\, |\ x^2 + y^2 = 1, \ x<\sqrt{2}/2\} ,
\ \ \ 
\tilde{\Gamma_2} := \{(x,y)\, |\ x^2 + y^2 = 1, \ x>\sqrt{2}/2 \} . $$
\noindent  The inner component of $\partial\Omega$ is called $\Gamma_i := \{ 
(x,y) ;\ x^2 + y^2 = 1/4 \}$.

\noindent
Given the Cauchy data\, $f(\theta) = \sin(\theta)$\, and\, $g(\theta) = 0$\, 
on $\Gamma_1$ (respectively $\tilde{\Gamma_1}$) we reconstruct on $\Gamma_2$ 
(respectively $\tilde{\Gamma_2}$) the trace of the solution of the non linear 
Cauchy problems:
$$  \left\{ \begin{array}{rcl}
            \Delta\, u + u^3 & = & [\, (r+1/r) \sin(\theta) /2 \, ]^3
                                    \ ,\ \mbox{ in } \Omega \\
            u                & = & f \ ,\ \mbox{ at } \Gamma \\
            u_{\nu}          & = & g \ ,\ \mbox{ at } \Gamma \\
            u                & = & \frac{5}{4} \sin(\theta)
                                    \ ,\ \mbox{ at } \Gamma_i
            \end{array} \right. ,  $$
\noindent  where $\Gamma$ stands for both $\Gamma_1$ and $\tilde{\Gamma_1}$. 
Both problems have the same solution $u^\ast(x,y) = (r+1/r) \sin(\theta)/2$.

        A mesh with 82\,688 nodes and linear elements is used, the stopping 
criterion being the same as in section~3.1. In Figure~6~(a) and 6~(b) one 
can see the exact solution (dotted line) and the iterated sequence $\psi_k$ 
(solid line) for the Cauchy problems with data given on $\Gamma_1$ and 
$\tilde{ \Gamma_1}$ respectively. (note that the x--axis is parameterized 
from $0$ to $\pi$ in Figure~6~(a) and from $\pi/4$ to $3\pi/4$ in Figure~6~(b))

\centerline{ \hfill 
       \epsfxsize4cm \epsfysize5cm \epsfbox{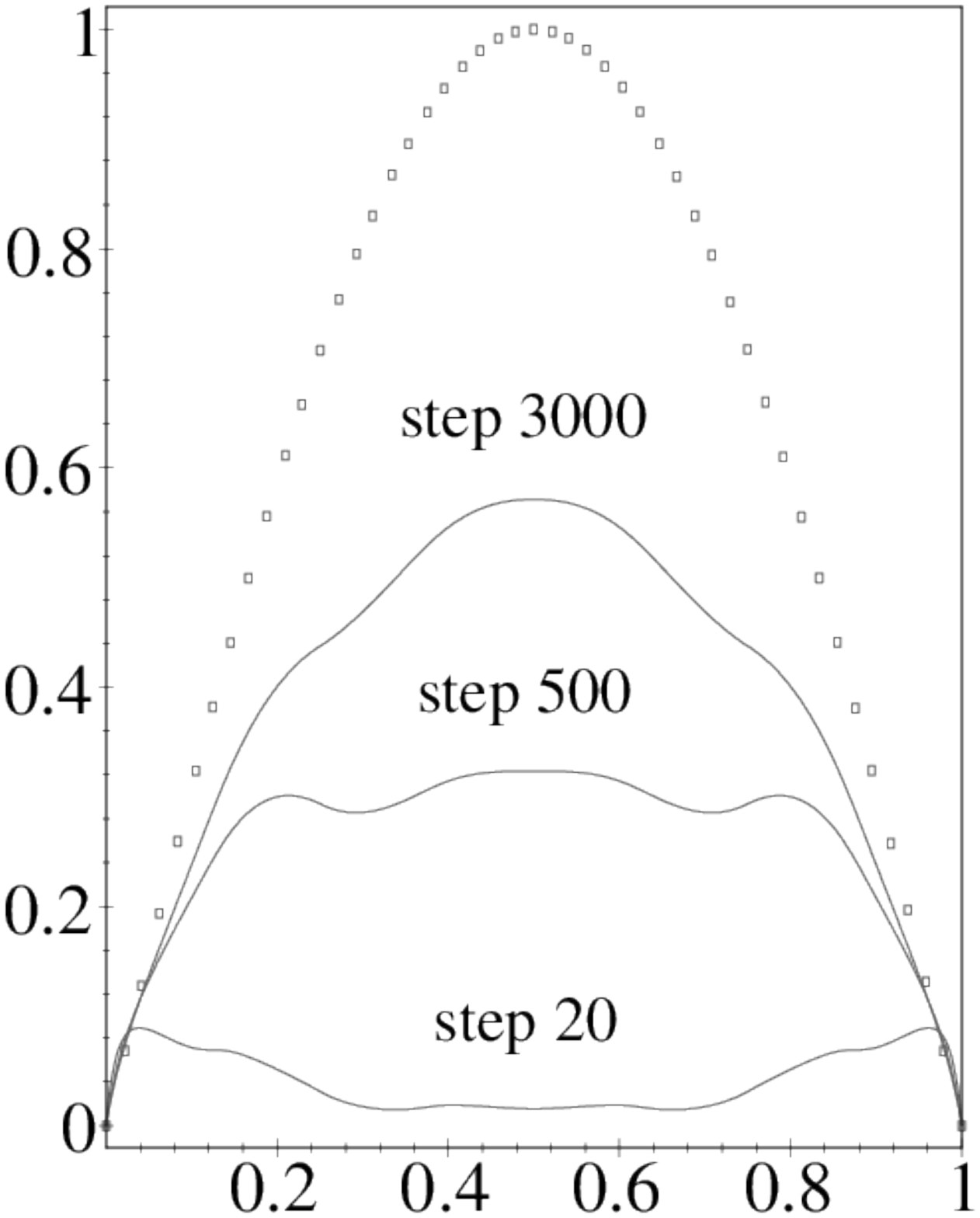} \hfill
       \epsfxsize4cm \epsfysize5cm \epsfbox{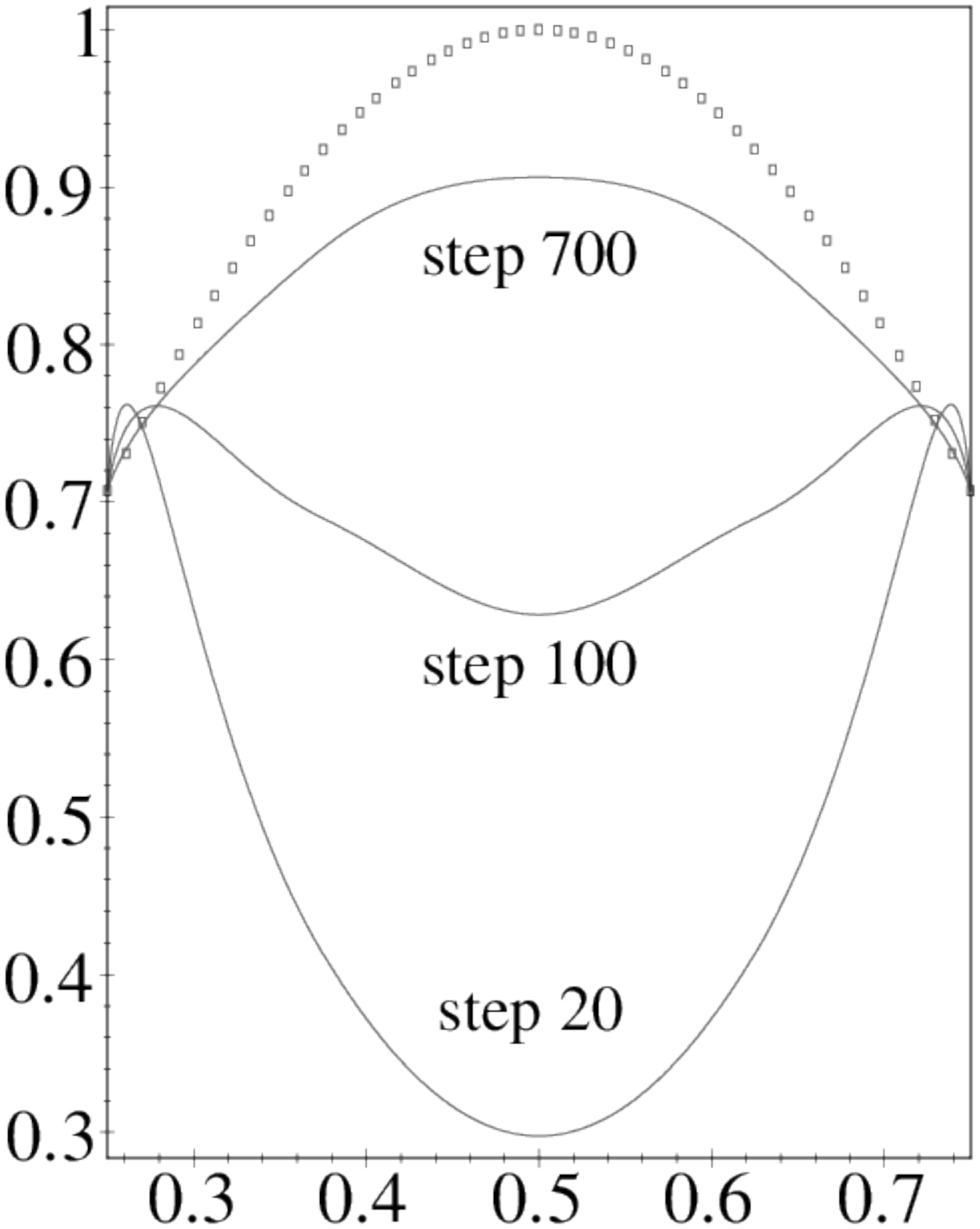} \hfill }
\vskip-.3cm
\centerline{\rm  (a) \hskip5.3cm (b)}
\bigskip

\noindent \begin{minipage}{14cm}
Figure~6: Trace of the iteration for a non linear Cauchy problem at the 
annulus with inner and outer radius respectively $1/2$ and $1$
\end{minipage}
\bigskip

        Both iterations begin with $\psi_0 \equiv 0$. As it happens in the 
linear case, the limit of the sequence $\psi_k$ does not depend on the choice 
of the initial data $\psi_0$, but the convergence of the iteration is slower 
for this non linear operator.

        Comparing the iteration for both Cauchy problems with data at 
$\Gamma_1$ and $\tilde{\Gamma_1}$ is clear that the amount of known 
information one has, determines both velocity of convergence and precision 
of the reconstruction.

\section{Concluding Remarks}

\begin{remark}  The classical problem of Hadamard in section~3.1 is 
numerically treated in [FaMo]. They propose a direct method to solve the 
Cauchy problem on the square $(0,1)\times(0,1)$ and obtained a reconstruction 
with approximately 5\% error in the $L^2(\Gamma_2)$ norm, while our 
reconstruction has an error of approximately 4\% in this norm (see 
Figure~\ref{fig2} after 100 iterations).

        The Cauchy problem in section~3.2 is numerically treated in [KuIs], 
where boundary elements are used on the annulus with inner and outer 
radius 2 and 6 respectively. The reconstruction error in the $L^\infty( 
\Gamma_2)$--norm has the order of $10^{-2}$. The corresponding error after 
100 iteration steps has the same order (see figure~\ref{fig3}).

        The difficulty in solving Cauchy problems by using direct methods 
is that the ill-posedness of the resulting system increases as one tries to 
refine the numerical model.
\end{remark}

\begin{remark}  A motivation for the convergence of the iteration (in the 
$L^\infty$--norm) even in the inconsistent case can be found on 
the fact that the numerical discretization $T_{l,h}$ of the operator $T_l$ 
we use to generate the sequence  $\vphi_k$ is contractive. In fact, the 
finite element method has the property that only the spaces, and not the 
differential equation itself, are discretizated (see [Od]). This 
implies that the eigenvalues of $T_{l,h}$ are lower or equal to the 
respective eigenvalues of $T_l$. As the eigenfunctions of $T_l$ (defined at 
$\Gamma_2$) cannot be obtained by taking traces of the linear finite elements 
we are using, we conclude that the eigenvalues%
\footnote{Obviously there is only a finite number of them.}
of $T_{l,h}$ are strict smaller than the respective eigenvalues of $T_l$.
\label{contraction} \end{remark}

\begin{remark}  Another discretization of the operator $T_l$ was tested. We 
tried to solve the mixed problems on the square domain using finite 
differences. This has the advantage of being faster (we must invert 2 
stiffness matrices once, but each iteration is than evaluated as a simple 
matrix vector product) specially if the number of iteration to be computed 
is large. Our numerical experiments show that the eigenvalues of this second 
discretization of $T_l$ approximate the real eigenvalues better than the 
discretization by finite elements does. As a direct consequence, the 
approximation obtained using a finite difference discretization --with the 
same stopping criterion-- is better than the one obtained using finite 
elements.
\end{remark}

\begin{remark}  If the differential operator $P$ in (\ref{L-definition}) is 
non linear, the property
$$  T \, \vphi \ = \ \vphi ,  $$
\noindent  where $T$ is the operator defined in (\ref{Tl_def}), will only 
hold if one can guarantees the unicity of solutions for the Cauchy problem 
governed by $P$ {\em a priori}. Even if this is the case, one still have to 
make sure that each mixed problem on the iteration is uniquely solvable.

        We does not prove these properties for the Cauchy problem in 
section~3.4, but the iteration was performed using different analytical 
functions as initial data. The velocity of convergence and the limit 
encountered is the same for all our tests.

        In spite of the fact that the convergence of the iteration is much 
slower in the non linear case, it can be again verified that the fixed point 
of $T_h \, \vphi = \vphi$ approaches better the fixed point of 
$T \, \overline{\vphi} = \overline{\vphi}$, if the precision $h$ of the 
discretization increases.
\end{remark}

\appendix

\section{Sobolev spaces and trace Theorems}

        Let $\Omega \in \R^2$ be an open, bounded, regular%
\footnote{We mean $\Omega$ is locally at one side of $\partial\Omega$.} 
set with $C^\infty$--boundary $\partial\Omega$, which is splited in 
$\partial\Omega = \cup_{j=1}^N \overline{\Gamma_j}$, the subsets 
$\Gamma_j$ being open, connected and satisfying $\Gamma_i \cap \Gamma_j = 
\emptyset$ for  $1 \leq i \not= j \leq N$. We denote by $\gamma_j$ the 
trace operator with domain $C^\infty(\overline{\Omega})$ and 
range $C^\infty(\Gamma_j)$. With $\nu_j$ we represent the vector field 
normal to $\Gamma_j$.

        Given the second order elliptic operator
$$  P(u) \ := \ - \sum_{i,j=1}^{2}{\, D_i (a_{i,j} D_j u)} \ + \ 
                          \sum_{i=1}^{2} {\, a_i D_i u} \ + \ a_0\, u ,  $$
\noindent  we represent the co-normal derivative of a function $u$ 
respective to $P$ by
$$  u_\nuA \ = \ \frac{\partial u}{\partial \nuA} \ := \ 
                       \sum_{i,j=1}^{2}{\, a_{i,j} \, \nu_i \, (D_j u)} \,  $$
\noindent  where $A$ represent the $2\times 2$ matrix $(a_{i,j})$. 

        We introduce now the Sobolev spaces used in this article. For $s = k 
+ \sigma \in \R^+$ with $k \in \N_0$ and $\sigma \in [0,1)$, we define
$$  H^s(\Omega) := \overline{C^\infty(\overline{\Omega})}^
                                          {||\cdot||_{s;\Omega}} ,\
    H^s_0(\Omega) := \overline{C^\infty_0(\Omega)}^{||\cdot||_{s;\Omega}} ,\
    H^s_0(\Omega\cup\Gamma) := \overline{C^\infty_0(\Omega\cup\Gamma)}^
                                                   {||\cdot||_{s;\Omega}},  $$
\noindent  where the functional $||\cdot||_{s;\Omega}$ is defined by
$$  ||u||_{s;\Omega}^2 \ := \ 
    \sum\limits_{|\alpha|\leq k}{||D^\alpha u||_{L^2(\Omega)}^2} \ + \ 
    \sum_{|\alpha|=k} \ {\int\!\!\!\int}_{\!\!\Omega\times\Omega}
{\frac{|D^\alpha u(x) - D^\alpha u(y)|^2}{|x - y|^{n+2\sigma}}\ dx\, dy}\, .$$
\noindent  For $s>0$ we define the space $H^{-s}(\Omega)$ by duality
$$    H^{-s}(\Omega) \ = \ \{ u\in {\cal D}'(\Omega)\ /\ <u,\cdot>_
                                 {L^2(\Omega)}\ \in H^s_0(\Omega)' \} \, .  $$
\noindent  Given the differential operator $P$ as above, we define the space 
$H^1(\Omega;P)$ as the space of distributions with
$$           \{ u\in H^1(\Omega)\ /\ P \, u \in L^2(\Omega) \} \, .         $$
\noindent  The last space we need is $H^s_{00}(\Omega), \ s\in \R^+$. If 
$s = k + \sigma$ with $k \in \N_0$ and $\sigma \in [0,1)$ we define the 
functional
$$  ||u||_{s;{00};\Omega} \ := \ \left\{ ||u||^2_{s;\Omega} \ + \
          \sum_{|\alpha|=k} \, \int_{\Omega} {\, |D^\alpha u(x)|^2 \,
                      d^{-2\sigma}(x,\partial\Omega) \ dx} \right\}^{\me},  $$
\noindent  where $d(x,\partial\Omega)$ is the Euclidean distance between 
$x \in \R^n$ and $\partial\Omega$. Now we define $H^s_{00}(\Omega)$ as the 
closure of $C_0^\infty(\Omega)$ with respect to $||\cdot||_{s;{00};\Omega}$.

        The next theorems can be found in [DaLi] or [Gr1,2] and will be 
needed in the study of weak solutions of mixed boundary value problems.

\begin{theorem}[Neumann trace of a $H^1(\Omega;P)$ distribution]  The operator
$$  \Gamma_\nuA : u \ \longmapsto \ \gamma \frac{\partial u}{\partial \nuA} $$
\noindent  defined on $C^\infty (\overline{\Omega})$ has only one continuous 
extension defined at
$$  \tilde{\Gamma}_\nu : H^1(\Omega;P) \ \longrightarrow \
                                            H^{\mme}(\partial \Omega) \, .  $$
\label{spur_H_om_P} \end{theorem}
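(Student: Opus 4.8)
The plan is to build the extension $\tilde\Gamma_\nu$ by a duality argument anchored in Green's formula, and then to obtain the uniqueness of the extension from the density of $C^\infty(\overline\Omega)$ in $H^1(\Omega;P)$. First I would invoke the Dirichlet trace theorem: $\gamma: H^1(\Omega)\to H^{\me}(\partial\Omega)$ is continuous and has a continuous right inverse $E:H^{\me}(\partial\Omega)\to H^1(\Omega)$, i.e.\ $\gamma\circ E=\mathrm{id}$ and $\|Eg\|_{H^1(\Omega)}\le c\,\|g\|_{\me;\partial\Omega}$. For $u\in C^\infty(\overline\Omega)$ and any $v\in H^1(\Omega)$, classical integration by parts gives
$$\int_{\partial\Omega} u_{\nuA}\,\gamma v\,d\Gamma \;=\; \int_\Omega (\nabla u)^t A\,\nabla v\,dx \;+\; \int_\Omega\Big(\sum_{i} a_i\, D_i u\Big)v\,dx \;+\; \int_\Omega a_0\,u\,v\,dx \;-\; \int_\Omega (Pu)\,v\,dx .$$
The right-hand side is meaningful for every $u\in H^1(\Omega;P)$, so for such $u$ I would \emph{define} a linear functional on $H^{\me}(\partial\Omega)$ by declaring $\langle\tilde\Gamma_\nu u,\,g\rangle$ to be the right-hand side evaluated at $v=Eg$.

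Three points then need to be verified. First, well-definedness: the value does not depend on the particular $v$ with $\gamma v=g$, because if $\gamma v_1=\gamma v_2$ then $v_0:=v_1-v_2\in H^1_0(\Omega)$, and the hypothesis $Pu\in L^2(\Omega)$ means precisely, by the distributional definition of $Pu$, that the right-hand side vanishes for every $v_0\in C_0^\infty(\Omega)$; density of $C_0^\infty(\Omega)$ in $H^1_0(\Omega)$ extends this to all of $H^1_0(\Omega)$. Second, boundedness: by Cauchy--Schwarz and the $L^\infty$-bounds on the coefficients $a_{i,j},a_i,a_0$, the right-hand side is bounded by $c\,\big(\|u\|_{H^1(\Omega)}+\|Pu\|_{L^2(\Omega)}\big)\,\|v\|_{H^1(\Omega)}\le c'\,\|u\|_{H^1(\Omega;P)}\,\|g\|_{\me;\partial\Omega}$, using $\|Eg\|_{H^1(\Omega)}\le c\,\|g\|_{\me;\partial\Omega}$; hence $\tilde\Gamma_\nu u\in\big(H^{\me}(\partial\Omega)\big)'=H^{\mme}(\partial\Omega)$ with $\|\tilde\Gamma_\nu u\|_{\mme;\partial\Omega}\le c'\,\|u\|_{H^1(\Omega;P)}$, so $\tilde\Gamma_\nu$ is continuous. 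Third, consistency: for $u\in C^\infty(\overline\Omega)$ the defining identity is exactly the Green formula above, so $\langle\tilde\Gamma_\nu u,g\rangle=\langle\gamma u_{\nuA},g\rangle$ for all $g$, i.e.\ $\tilde\Gamma_\nu u=\gamma u_{\nuA}=\Gamma_{\nuA}u$.

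It remains to establish that $\tilde\Gamma_\nu$ is the \emph{unique} continuous extension. Any two continuous maps $H^1(\Omega;P)\to H^{\mme}(\partial\Omega)$ that agree on $C^\infty(\overline\Omega)$ must coincide, provided $C^\infty(\overline\Omega)$ is dense in $H^1(\Omega;P)$ for the graph norm $u\mapsto\|u\|_{H^1(\Omega)}+\|Pu\|_{L^2(\Omega)}$. I would prove this density by the standard localization-and-mollification scheme: take a finite $C^\infty$ partition of unity subordinate to a cover of $\overline\Omega$ by open sets in which, after a $C^\infty$ change of coordinates, $\partial\Omega$ becomes a piece of hyperplane and $\Omega$ lies on one side; in each chart shift the (localized) function a little into $\Omega$ and mollify, checking that both the function and its image under $P$ converge in $L^2$ — this is where the $C^\infty$-regularity of the coefficients is used to commute $P$ with translation/mollification up to terms that vanish in the limit. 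This density step is the main technical obstacle; the rest is routine bookkeeping with Green's formula and the trace inequality.
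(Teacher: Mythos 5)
Your argument is essentially correct, but there is nothing in the paper to compare it with: Theorem~\ref{spur_H_om_P} is stated in Appendix~A without proof and is simply quoted from the literature ([DaLi], [Gr1,2]). The route you take --- defining $\langle\tilde\Gamma_\nu u,g\rangle$ through Green's formula with a lifting $v=Eg$ of the boundary datum, checking independence of the lifting via the distributional meaning of $Pu$ on $H^1_0(\Omega)$, reading off continuity into $H^{-1/2}(\partial\Omega)=(H^{1/2}(\partial\Omega))'$ from the trace inequality, and reducing uniqueness of the extension to density of $C^\infty(\overline\Omega)$ in $H^1(\Omega;P)$ for the graph norm --- is exactly the standard proof given in those references, so you have in effect reconstructed the cited argument rather than found an alternative to anything in the paper. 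You also correctly identify the density step as the only nontrivial point; the boundary-flattening, inward translation and mollification scheme works, with the commutator $P(u_\eps)-(Pu)_\eps$ handled by a Friedrichs-type lemma applied to $w=D_ju\in L^2$.

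Two small caveats are worth recording. First, the Friedrichs commutator argument needs at least Lipschitz (say $C^1$) leading coefficients, so your density proof covers the paper's stated $C^\infty$ setting but not the bare $L^\infty$ assumption of (\ref{coef_beding}); for merely bounded measurable $a_{i,j}$ the density of $C^\infty(\overline\Omega)$ in $H^1(\Omega;P)$ is a genuinely harder question, which is one reason the paper defers to [DaLi]/[Gr1,2]. Second, a cosmetic point: since the conormal derivative is $u_{\nu_A}=\nu^t A\nabla u$, integration by parts of $-\sum D_i(a_{i,j}D_j u)$ produces the bulk term $\int_\Omega(\nabla v)^t A\,\nabla u\,dx$, not $\int_\Omega(\nabla u)^t A\,\nabla v\,dx$; the two agree only for symmetric $A$, so you should write the pairing in the former order (as in Theorem~\ref{sa:green_standard}) to keep the identity valid without a symmetry hypothesis.
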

%
\begin{theorem}[Dirichlet trace on $\gamma_j$]  Let $m \in \N$. The operator
$$  u \ \longmapsto 
 \ \left\{ \gamma_j u,\ \gamma_j \frac{\partial u}{\partial \nu_j},
 \ \dots\ ,\ \gamma_j \frac{\partial^{m-1}u}{\partial \nu^{m-1}_j} \right\} $$
\noindent  defined for every $j=1,\dots,m$ at $C^\infty(\overline{\Omega})$ 
has only one continuous extension from
$$    H^m(\Omega)\ \ \mbox{onto}\ \ \prod_{i=0}^{m-1}{H^{m-i-\frac{1}{2}}
                                                         (\Gamma_j)} \, .   $$
\label{spursatz_rand_st} \end{theorem}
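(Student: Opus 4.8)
The statement is the classical Gagliardo--Lions--Magenes trace theorem, here restricted to a single boundary patch $\Gamma_j$, and the plan is to follow the standard \emph{localize -- flatten -- model problem} route. Fix a finite atlas of $C^\infty$ charts adapted to $\partial\Omega$ together with a subordinate partition of unity $\{\chi_k\}$; composing $\chi_k u$ with the $C^\infty$ diffeomorphism that straightens the corresponding piece of $\partial\Omega$, one gets that $\|u\|_{H^m(\Omega)}$ is equivalent to $\big(\sum_k\|u_k\|_{H^m(\R^n_+)}^2\big)^{1/2}$ with each $u_k$ supported in a fixed bounded subset of $\overline{\R^n_+}$ and $\{x_n=0\}$ corresponding to $\partial\Omega$. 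Under this change of variables the conormal-type operators $\partial^i/\partial\nu_j^i$ turn into $\partial_n^i$ (where $\partial_n:=\partial/\partial x_n$) plus lower-order tangential terms. Since $C^\infty(\overline\Omega)$ is dense in $H^m(\Omega)$ by the definition in Appendix~A and the trace map is already defined on that subspace, the a priori estimate below automatically gives uniqueness of the continuous extension; so only the estimate and surjectivity remain.

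For the estimate, in the half-space I would Fourier-transform in the tangential variable $x'\mapsto\xi'$ and reduce, for each fixed $\lambda:=(1+|\xi'|^2)^{1/2}\ge 1$, to the one-dimensional weighted inequality
$$ \lambda^{2(m-i)-1}\,\big|v^{(i)}(0)\big|^2 \ \le\ C\sum_{k=0}^{m}\lambda^{2(m-k)}\int_0^\infty \big|v^{(k)}(t)\big|^2\,dt, \qquad 0\le i\le m-1, $$
valid for $v\in H^m(0,\infty)$. This follows from the case $i=0$ --- write $|v(0)|^2=-\int_0^\infty \partial_t\big(\eta(\lambda t)\,|v(t)|^2\big)\,dt$ with $\eta\in C_0^\infty[0,\infty)$, $\eta\equiv 1$ near $0$, then apply Cauchy--Schwarz and Young's inequality --- applied to $v^{(i)}$. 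Integrating in $\xi'$, using Plancherel and the Fourier characterization $\|u\|_{H^m(\R^n_+)}^2\simeq\sum_{k\le m}\int(1+|\xi'|^2)^{m-k}\int_0^\infty|\widehat{\partial_n^k u}(\xi',x_n)|^2\,dx_n\,d\xi'$, one obtains $\|\gamma_0\partial_n^i u\|_{H^{m-i-1/2}(\R^{n-1})}\le C\|u\|_{H^m(\R^n_+)}$; summing over the charts and restricting from $\partial\Omega$ to the open set $\Gamma_j$ yields continuity of the trace operator into $\prod_{i=0}^{m-1}H^{m-i-1/2}(\Gamma_j)$.

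For surjectivity I would exhibit a continuous right inverse. In the half-space, given $g_i\in H^{m-i-1/2}(\R^{n-1})$, set
$$ \widehat u(\xi',x_n)\ :=\ \sum_{i=0}^{m-1}\frac{x_n^{\,i}}{i!}\,\widehat{g_i}(\xi')\,\theta\big((1+|\xi'|^2)^{1/2}x_n\big) $$
with $\theta\in C_0^\infty[0,\infty)$, $\theta\equiv 1$ near $0$; the same rescaling $t\mapsto\lambda t$ shows $u\in H^m(\R^n_+)$ with $\|u\|_{H^m}\le C\sum_i\|g_i\|_{H^{m-i-1/2}}$ and $\partial_n^i u|_{x_n=0}=g_i$. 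To produce a global solution on $\Omega$ from data $g_i\in H^{m-i-1/2}(\Gamma_j)$, I would first extend each $g_i$ to $\widetilde g_i\in H^{m-i-1/2}(\partial\Omega)$ by a fixed bounded extension operator (possible because $\Gamma_j$ is open with smooth relative boundary), lift to $u\in H^m(\Omega)$ through the charts and a cutoff so as to prescribe the full Cauchy data $(\widetilde g_0,\dots,\widetilde g_{m-1})$ on $\partial\Omega$, and then restrict to $\Gamma_j$.

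The main obstacle I expect is precisely this bookkeeping: one must control how the straightening diffeomorphism mixes normal and tangential derivatives, so that the identity $\gamma_j\partial_{\nu_j}^i u=g_i$ is recovered only after inverting the resulting triangular system relating $\{\partial_n^i(u\circ\text{chart})|_0\}$ to $\{\partial_{\nu_j}^i u|_{\Gamma_j}\}$ (this is the induction on $i$), and one must arrange the patch-extension operator above to be simultaneously bounded on every $H^{m-i-1/2}(\Gamma_j)$. Once these routine but delicate points are settled, continuity and surjectivity onto $\prod_{i=0}^{m-1}H^{m-i-1/2}(\Gamma_j)$ follow, and the density argument gives uniqueness of the extension; the full details are in [DaLi] or [Gr1,2].
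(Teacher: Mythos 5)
The paper gives no proof of this statement at all---it is quoted from [DaLi] and [Gr1,2] as a known trace theorem---and your argument is exactly the standard localize--flatten--Fourier proof found in those references (tangential Fourier transform, the one-dimensional weighted boundary inequality, and the Taylor-type lifting $\sum_i \frac{x_n^i}{i!}\widehat{g_i}(\xi')\,\theta((1+|\xi'|^2)^{1/2}x_n)$ for the right inverse), so it is in effect the same route the paper implicitly relies on. Your sketch is sound, and the one point specific to this setting---surjectivity onto the spaces over the piece $\Gamma_j$ rather than over all of $\partial\Omega$, which you handle by a bounded extension $H^{m-i-\frac{1}{2}}(\Gamma_j)\to H^{m-i-\frac{1}{2}}(\partial\Omega)$ before lifting---is precisely what the paper records, citing [Au], in the remark immediately following the theorem.
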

\begin{remark}  The trace operator in Theorem~\ref{spursatz_rand_st} has a 
continuous right inverse. This fact follows from the existence of a 
prolongation operator $\omega \in {\cal L}(H^s(\Gamma_j), 
H^s(\partial\Omega))$, $s \geq 0$, that is the continuous right inverse of 
the restriction operator $\rho \in {\cal L} (H^s(\partial\Omega),$ 
$H^s(\Gamma_j))$. For details see [Au] pp. 187-194.
\label{inv_ spur_dir_rand_st} \end{remark}
%
\begin{theorem}[Neumann trace on $\gamma_j$]  The Neumann trace operator 
defined on $C^\infty(\overline{\Omega})$ with range in $C^\infty(\Gamma_j)$ 
has only one continuous extension as an operator from
$$       H^1(\Omega;P)\ \ \mbox{into}\ \ H^{\me}_{00}(\Gamma_j)',      $$
\noindent   for $j = 1,\dots,N.$
\label{spur_H_om_P_rand_st} \end{theorem}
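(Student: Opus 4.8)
The plan is to realise the claimed extension as $E_j^{t}\circ\tilde\Gamma_\nu$, where $\tilde\Gamma_\nu:H^1(\Omega;P)\to H^{\mme}(\partial\Omega)$ is the global co-normal trace of Theorem~\ref{spur_H_om_P} and $E_j$ is the ``extension by zero'' operator that sends a function on the arc $\Gamma_j$ to its zero-extension on $\partial\Omega$. The only genuinely non-trivial ingredient is the boundedness
$$ E_j\ \in\ {\cal L}\bigl(H^{\me}_{00}(\Gamma_j),\ H^{\me}(\partial\Omega)\bigr). $$
This is exactly the feature that singles out the Lions--Magenes space: extending a generic $H^{\me}(\Gamma_j)$--function by zero produces a jump at the junction points $\overline{\Gamma_i}\cap\overline{\Gamma_j}$ which ruins $H^{\me}(\partial\Omega)$--regularity, while the reciprocal-distance weight appearing in $\|\cdot\|_{\me;00}$ is precisely what controls that jump. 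I would cite this from [DaLi] or [Gr1,2]; it also reduces, via a partition of unity near the two endpoints, to the classical model fact that functions in $H^{\me}_{00}(0,1)$ extend by zero to elements of $H^{\me}(\R)$. Once this is available, $E_j$ has a transpose $E_j^{t}\in{\cal L}\bigl(H^{\mme}(\partial\Omega),\,H^{\me}_{00}(\Gamma_j)'\bigr)$ (using $H^{\me}(\partial\Omega)'=H^{\mme}(\partial\Omega)$ on the closed manifold $\partial\Omega$), and I would simply \emph{define} the Neumann trace on $\Gamma_j$ by $u\mapsto E_j^{t}(\tilde\Gamma_\nu u)$; being the composition of two bounded linear maps, this is bounded from $H^1(\Omega;P)$ into $H^{\me}_{00}(\Gamma_j)'$, which is the asserted continuity.

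To finish I would check compatibility and uniqueness. For $u\in C^\infty(\overline\Omega)$ and $\varphi\in H^{\me}_{00}(\Gamma_j)$, unwinding the definitions gives
$$ \langle E_j^{t}\tilde\Gamma_\nu u,\ \varphi\rangle \ =\ \langle \tilde\Gamma_\nu u,\ E_j\varphi\rangle_{\partial\Omega} \ =\ \int_{\partial\Omega}{u_{\nuA}\,(E_j\varphi)\,d\Gamma} \ =\ \int_{\Gamma_j}{u_{\nuA}\,\varphi\,d\Gamma}, $$
so on $C^\infty(\overline\Omega)$ the operator $E_j^{t}\circ\tilde\Gamma_\nu$ coincides with $\gamma_j(\partial u/\partial\nuA)$, read in $H^{\me}_{00}(\Gamma_j)'$ through the $L^2(\Gamma_j)$ pairing; thus it extends the classical Neumann trace on $\Gamma_j$. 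Since $C^\infty(\overline\Omega)$ is dense in $H^1(\Omega;P)$ — the density that already underlies Theorem~\ref{spur_H_om_P} — any two continuous extensions coincide, which gives uniqueness.

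I expect the boundedness of $E_j$ on $H^{\me}_{00}(\Gamma_j)$ to be the only real obstacle; everything else is soft functional analysis. If one prefers not to invoke that embedding as a black box, an equivalent route uses the generalized Green formula for $H^1(\Omega;P)$--functions: pick, via the continuous right inverse of the Dirichlet trace (the remark following Theorem~\ref{spursatz_rand_st}), a lifting $v\in H^1(\Omega)$ with Dirichlet trace $E_j\varphi$ on $\partial\Omega$ and $\|v\|_{H^1(\Omega)}\le c\,\|\varphi\|_{H^{\me}_{00}(\Gamma_j)}$, and estimate
$$ \bigl|\langle u_{\nuA},\,E_j\varphi\rangle\bigr| \ =\ \Bigl|\int_\Omega{(\nabla u)^t A\,\nabla v\,dx}\ -\ \int_\Omega{(Pu)\,v\,dx}\Bigr| \ \le\ C\,\|u\|_{H^1(\Omega;P)}\,\|v\|_{H^1(\Omega)}\,. $$
This merely relocates the same difficulty, though, since producing such a lifting continuously in $\|\varphi\|_{H^{\me}_{00}(\Gamma_j)}$ again rests on $H^{\me}_{00}(\Gamma_j)$ being exactly the space of $H^{\me}(\partial\Omega)$--traces that are supported in $\overline{\Gamma_j}$.
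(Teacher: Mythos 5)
You should know at the outset that the paper contains no proof of this statement: Theorem~\ref{spur_H_om_P_rand_st} is listed in Appendix~A among the trace results quoted from [DaLi] and [Gr1,2], so the only comparison available is with the standard literature argument, and your sketch is essentially that argument, correct in outline. You factor the operator as $E_j^{t}\circ\tilde\Gamma_\nu$, with $\tilde\Gamma_\nu$ the global co-normal trace of Theorem~\ref{spur_H_om_P} and $E_j$ the extension by zero, and you correctly isolate the two genuinely nontrivial inputs: (a) the continuity of $E_j$ from $H^{\me}_{00}(\Gamma_j)$ into $H^{\me}(\partial\Omega)$, which is the defining feature of the Lions--Magenes space and the very reason the paper takes Neumann traces in $H^{\me}_{00}(\Gamma_j)'$ rather than in $H^{\mme}(\Gamma_j)$; and (b) the density of $C^\infty(\overline{\Omega})$ in $H^1(\Omega;P)$ for the graph norm, which yields uniqueness of the extension and which the paper itself invokes elsewhere (see the footnote to Theorem~\ref{sa:green_standard}). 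Both facts are exactly what the cited references supply, so leaning on them is no more of a black box than the paper's own treatment; your compatibility computation on $C^\infty(\overline{\Omega})$ is correct, and your remark that the alternative Green-formula/lifting route merely relocates the difficulty into producing a bounded $H^1$-lifting of $E_j\vphi$ is also accurate --- that is how the duality is organized in [LiMa] and [DaLi]. The one caveat worth recording is that the classical Neumann trace on $C^\infty(\overline{\Omega})$ and the density of smooth functions in $H^1(\Omega;P)$ presuppose some regularity of the coefficients $a_{i,j}$, whereas the standing hypothesis (\ref{coef_beding}) only asks $a_{i,j}\in L^\infty(\Omega)$; since the introduction announces $C^\infty$ coefficients, this imprecision is the paper's rather than yours, and your argument goes through under that hypothesis.
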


\section{Green's formula}

        In the analysis of the mixed boundary value problems that appear in 
the iterative procedure (IT), we make use of a special version of the 
(first) Green formula. The results presented here are still valid if 
$\partial\Omega$ only Lipschitz continuous and they can be found in [Au], 
[DaLi], [Gr1], [Le], [LiMa] or [Tr].

\begin{theorem}  If $(u,v)$ is a pair of functions in $H^2(\Omega) \times 
H^1(\Omega)$, then we have
\begin{equation}   \int_{\Omega}{P u\, v\ dx }\ =\ 
                                  \int_{\partial\Omega}{u_\nuA\, v\ d\Gamma}
         \ - \ \int_{\Omega}{(\nabla v)^t A \, \nabla u\ dx} \, .
\label{green_standard} \end{equation}
\noindent  Equation (\ref{green_standard}) is still valid if 
$(u,v) \in H^1(\Omega;P) \times H^1(\Omega)$.%
\footnote{The last assertion follows from the density of the embedding 
of $C^\infty(\overline{\Omega})$ into $H^1(\Omega;\Delta)$ together with 
Theorem~\ref{spur_H_om_P}.}
\label{sa:green_standard} \end{theorem}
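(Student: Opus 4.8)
The plan is to establish (\ref{green_standard}) first for $u,v\in C^\infty(\overline\Omega)$ and then to reach the two stated generalities by successive density passages. For smooth $u,v$ the identity is the classical integration-by-parts formula: writing $Pu=-\sum_{i,j}D_i(a_{i,j}D_j u)$ and integrating each summand $\int_\Omega D_i(a_{i,j}D_j u)\,v\,dx$ by parts over the $C^\infty$ (hence Lipschitz) domain $\Omega$, with $u_\nuA=\sum_{i,j}a_{i,j}\nu_i D_j u$, the interior and boundary contributions assemble into the right-hand side of (\ref{green_standard}). By the very definitions in Appendix~A, $C^\infty(\overline\Omega)$ is dense in $H^1(\Omega)$ and in $H^2(\Omega)$; for the last assertion of the theorem one additionally uses the density of $C^\infty(\overline\Omega)$ in the graph space $H^1(\Omega;P)$, which is classical (see the references quoted before the statement, and compare the footnote to the theorem).

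For the first passage let $(u,v)\in H^2(\Omega)\times H^1(\Omega)$ and choose $u_n,v_n\in C^\infty(\overline\Omega)$ with $u_n\to u$ in $H^2(\Omega)$ and $v_n\to v$ in $H^1(\Omega)$; I claim each of the three terms of (\ref{green_standard}), written for $(u_n,v_n)$, converges to the corresponding term for $(u,v)$. Indeed $\int_\Omega Pu_n\,v_n\,dx\to\int_\Omega Pu\,v\,dx$, since $P$ is bounded from $H^2(\Omega)$ to $L^2(\Omega)$ (the coefficients being smooth), whence $Pu_n\to Pu$ in $L^2(\Omega)$, while $v_n\to v$ in $L^2(\Omega)$; next $\int_\Omega(\nabla v_n)^tA\,\nabla u_n\,dx\to\int_\Omega(\nabla v)^tA\,\nabla u\,dx$, because $\nabla u_n\to\nabla u$ and $\nabla v_n\to\nabla v$ in $L^2(\Omega)$ and $A\in L^\infty(\Omega)$; finally, since $D_j u_n\to D_j u$ in $H^1(\Omega)$, continuity of the Dirichlet trace (cf. Theorem~\ref{spursatz_rand_st}) gives $\gamma(D_j u_n)\to\gamma(D_j u)$ in $L^2(\partial\Omega)$ and likewise $\gamma v_n\to\gamma v$ in $L^2(\partial\Omega)$, so -- a product of two $L^2(\partial\Omega)$-convergent sequences converging in $L^1(\partial\Omega)$, with $a_{i,j}\nu_i$ bounded -- $\int_{\partial\Omega}(u_n)_\nuA\,v_n\,d\Gamma\to\int_{\partial\Omega}u_\nuA\,v\,d\Gamma$. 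Hence (\ref{green_standard}) holds on $H^2(\Omega)\times H^1(\Omega)$.

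For the last assertion, fix $v\in H^1(\Omega)$ and $u\in H^1(\Omega;P)$, and pick $u_n\in C^\infty(\overline\Omega)\subset H^2(\Omega)$ with $u_n\to u$ in $H^1(\Omega)$ and $Pu_n\to Pu$ in $L^2(\Omega)$. Applying the case just proved to the pairs $(u_n,v)$, the volume term and the bilinear form converge to $\int_\Omega Pu\,v\,dx$ and $\int_\Omega(\nabla v)^tA\,\nabla u\,dx$ exactly as above. The boundary term is now to be read as the duality pairing $\langle u_\nuA,\gamma v\rangle_{H^{\mme}(\partial\Omega),\,H^{\me}(\partial\Omega)}$, and here Theorem~\ref{spur_H_om_P} is the key input: the Neumann trace $\tilde{\Gamma}_\nu$ is continuous from $H^1(\Omega;P)$ into $H^{\mme}(\partial\Omega)$, so $(u_n)_\nuA\to u_\nuA$ in $H^{\mme}(\partial\Omega)$, and pairing against the fixed $\gamma v\in H^{\me}(\partial\Omega)$ yields $\int_{\partial\Omega}(u_n)_\nuA\,v\,d\Gamma\to\langle u_\nuA,\gamma v\rangle$. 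Passing to the limit in (\ref{green_standard}) for $(u_n,v)$ gives the formula on $H^1(\Omega;P)\times H^1(\Omega)$.

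The only delicate point is this last boundary term: for $u\in H^1(\Omega;P)$ the conormal derivative $u_\nuA$ lives only in $H^{\mme}(\partial\Omega)$, so the symbol $\int_{\partial\Omega}u_\nuA\,v\,d\Gamma$ must be interpreted as a duality pairing, and both its well-posedness and the limit above rest on the continuity of the Neumann trace on the graph space (Theorem~\ref{spur_H_om_P}) together with the density of $C^\infty(\overline\Omega)$ in $H^1(\Omega;P)$ -- the one ingredient external to the computation. Everything else is routine once one orders the approximations correctly: first regularize $u$ up to $C^\infty(\overline\Omega)$ with $v$ held fixed, using the $H^2(\Omega)\times H^1(\Omega)$ identity as a bridge, rather than attempting to approximate in $H^1(\Omega;P)$ and $H^1(\Omega)$ simultaneously.
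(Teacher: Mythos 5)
Your proof is correct and takes essentially the route the paper itself indicates: the paper offers no argument beyond the references and the footnote to Theorem~\ref{sa:green_standard}, which points precisely to the two ingredients you isolate for the final assertion --- density of $C^\infty(\overline{\Omega})$ in the graph space $H^1(\Omega;P)$ and the continuity of the Neumann trace from Theorem~\ref{spur_H_om_P}, interpreted through the $H^{-1/2}$--$H^{1/2}$ duality pairing --- preceded by the routine smooth-case integration by parts and the $H^2(\Omega)\times H^1(\Omega)$ density passage. (One small caveat, concerning the statement rather than your limiting arguments: for $P u=-\sum_{i,j}D_i(a_{i,j}D_j u)$ the smooth-case computation actually yields $\int_\Omega P u\, v\ dx = -\int_{\partial\Omega}u_\nuA v\ d\Gamma + \int_\Omega (\nabla v)^t A\,\nabla u\ dx$, so the signs in (\ref{green_standard}) as printed are reversed --- the version the paper actually uses later, e.g.\ in (\ref{Jour1}), is the correctly signed one --- and your assertion that the terms ``assemble into the right-hand side'' should be adjusted accordingly.)
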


        In next theorem the boundary integral in (\ref{green_standard}) 
is replaced by a sum of integrals over each $\gamma_j$.

\begin{theorem}  Given functions $u$ in $H^2(\Omega)$ and $v$ in $H^1(\Omega)$ 
we have
\begin{equation}   \int_{\Omega}{P u\, v\ dx }\ = \ 
                   \sum_{j=1}^{N}{\ \int_{\Gamma_j}{u_\nuA\, v\ d\Gamma}}
                   \ - \ \int_{\Omega}{(\nabla v)^t A \, \nabla u\ dx}.
\label{green_rand_stuck} \end{equation}
\label{sa:green_rand_stuck} \end{theorem}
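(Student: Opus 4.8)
The plan is to obtain this piecewise identity as an immediate consequence of the global Green formula already established in Theorem~\ref{sa:green_standard}; the only genuine work is to replace the single boundary integral over $\partial\Omega$ in (\ref{green_standard}) by the sum of integrals over the pieces $\Gamma_j$. Since a pair $(u,v)\in H^2(\Omega)\times H^1(\Omega)$ in particular satisfies $(u,v)\in H^1(\Omega;P)\times H^1(\Omega)$, Theorem~\ref{sa:green_standard} gives
$$\int_{\Omega}{P u\, v\ dx }\ =\ \int_{\partial\Omega}{u_\nuA\, v\ d\Gamma} \ - \ \int_{\Omega}{(\nabla v)^t A \, \nabla u\ dx},$$
so it is enough to prove
$$\int_{\partial\Omega}{u_\nuA\, v\ d\Gamma}\ =\ \sum_{j=1}^{N}\int_{\Gamma_j}{u_\nuA\, v\ d\Gamma}.$$

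For the splitting I would argue as follows. By the standing assumptions on the decomposition, $\partial\Omega=\cup_{j=1}^{N}\overline{\Gamma_j}$ with the $\Gamma_j$ open, connected and pairwise disjoint; hence the exceptional set $E:=\partial\Omega\setminus\cup_{j=1}^{N}\Gamma_j$ is the finite collection of endpoints of the arcs $\Gamma_j$, and therefore has zero arc--length measure on $\partial\Omega$. Moreover the integrand $u_\nuA\, v$ is a genuine $L^1(\partial\Omega)$ function: for $u\in H^2(\Omega)$ the trace of $\nabla u$ lies in $H^{\me}(\partial\Omega)\subset L^{2}(\partial\Omega)$, so $u_\nuA=\sum_{i,j}a_{i,j}\nu_i\,\gamma(D_j u)\in L^{2}(\partial\Omega)$ (using only $a_{i,j}\in L^\infty$), while $\gamma v\in H^{\me}(\partial\Omega)\subset L^{2}(\partial\Omega)$, and the product of two $L^2$ functions over the bounded manifold $\partial\Omega$ is in $L^1(\partial\Omega)$. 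Carrying no atomic part, this integrand integrates to zero over the null set $E$, so $\int_{\partial\Omega}u_\nuA v\,d\Gamma=\sum_{j=1}^{N}\int_{\Gamma_j}u_\nuA v\,d\Gamma$, where on each $\Gamma_j$ the conormal derivative is the one induced by the outward normal $\nu$ of $\partial\Omega$; since $\nu$ restricted to $\Gamma_j$ coincides with $\nu_j$, this is precisely the quantity denoted $u_\nuA$ in the statement. Substituting the splitting into (\ref{green_standard}) yields (\ref{green_rand_stuck}).

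The only point requiring any care is exactly the absence of a concentrated contribution at the junction points $\overline{\Gamma_i}\cap\overline{\Gamma_j}$, which is ruled out by the $L^1$--regularity of the boundary integrand noted above. If one prefers to avoid even this observation, an alternative route is to prove (\ref{green_rand_stuck}) first for $u\in C^\infty(\overline{\Omega})$, where the classical divergence theorem applies on $\Omega$ and the boundary integral is read off directly as a sum over the arcs $\Gamma_j$, and then to extend to arbitrary $u\in H^2(\Omega)$ by density of $C^\infty(\overline{\Omega})$ in $H^2(\Omega)$ together with the continuity of the Dirichlet and Neumann trace operators onto $H^{\me}(\Gamma_j)$ (Theorem~\ref{spursatz_rand_st}) and the obvious continuity in $u$ of the two volume integrals; every term then passes to the limit and (\ref{green_rand_stuck}) follows.
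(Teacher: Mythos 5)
Your argument is correct and follows essentially the same route as the paper, which likewise obtains (\ref{green_rand_stuck}) from the global formula (\ref{green_standard}) together with the trace Theorem~\ref{spursatz_rand_st} guaranteeing that the boundary integrands are square--integrable on each piece $\Gamma_j$, so that the boundary integral splits over the arcs (the junction points being a null set). Your explicit remark about the absence of concentrated contributions at the junctions, and the optional density argument, are just elaborations of the paper's one-line justification.
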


        This theorem is a consequence of the trace Theorem~\ref
{spursatz_rand_st}, that guarantees the boundedness of the inner products 
in (\ref{green_rand_stuck}) in the sense of $L^2(\Gamma_j)$. 
Theorem~\ref{sa:green_rand_stuck} still holds if $\partial\Omega$ is a 
$C^{1,1}$ polygon. In the next theorem (see [Le]) we formulate Green's 
formula in the exact context needed in this paper.

\begin{theorem}  Given $u \in H^1(\Omega;P)$ and $ v \in \{ v\in H^1 
(\Omega)\, |\, \gamma_j\, v \in H^{\me}_{00}(\Gamma_j)$, $j=1,\dots,N \}$ 
we have
$$    \int_{\Omega}{P u \, v\ dx}\ = \ 
      \sum_{j=1}^{N}{\ \int_{\Gamma_j}{u_\nuA\, v\ d\Gamma}}
                  \ - \ \int_{\Omega}{(\nabla v)^t A \, \nabla u\ dx} .  $$
\label{green_allgemein} \end{theorem}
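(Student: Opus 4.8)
The plan is to obtain the general Green's formula by a density argument that upgrades Theorem~\ref{sa:green_rand_stuck} to the rougher classes of $u$ and $v$ at hand. First I would fix $u \in H^1(\Omega;P)$ and $v$ in the stated space $V := \{ v \in H^1(\Omega) \mid \gamma_j v \in H^{\me}_{00}(\Gamma_j),\ j = 1,\dots,N \}$, and view both sides of the claimed identity as bilinear functionals of the pair $(u,v)$. Each of the three terms -- the volume integral $\int_\Omega Pu\, v\, dx$, the volume integral $\int_\Omega (\nabla v)^t A\, \nabla u\, dx$, and the sum of boundary integrals $\sum_j \int_{\Gamma_j} u_{\nuA}\, v\, d\Gamma$ -- must be shown to be separately continuous in $(u,v)$ with respect to the $H^1(\Omega;P) \times V$ topology, where $V$ carries the norm $\|v\|_{H^1(\Omega)} + \sum_j \|\gamma_j v\|_{H^{\me}_{00}(\Gamma_j)}$. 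The first two are immediate from Cauchy--Schwarz in $L^2(\Omega)$ together with boundedness of $A$. For the boundary term I would pair the Neumann trace $u_{\nuA} \in H^{\me}_{00}(\Gamma_j)'$ (continuous in $u$ by Theorem~\ref{spur_H_om_P_rand_st}) against the Dirichlet trace $\gamma_j v \in H^{\me}_{00}(\Gamma_j)$ (continuous in $v$ by the definition of $V$), so that each summand is a duality pairing bounded by $\|u_{\nuA}\|_{H^{\me}_{00}(\Gamma_j)'}\, \|\gamma_j v\|_{H^{\me}_{00}(\Gamma_j)}$.

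Next I would invoke density: $C^\infty(\overline\Omega)$ is dense in $H^1(\Omega;P)$ (this is the fact cited in the footnote to Theorem~\ref{sa:green_standard}), and $H^2(\Omega) \cap V$ -- equivalently smooth functions whose traces lie in $H^{\me}_{00}(\Gamma_j)$ -- is dense in $V$. For a pair of smooth approximants $(u_n, v_n)$ with $u_n \to u$ in $H^1(\Omega;P)$ and $v_n \to v$ in $V$, Theorem~\ref{sa:green_rand_stuck} (or already Theorem~\ref{sa:green_standard}) gives the identity exactly. Passing to the limit using the continuity established in the previous paragraph yields the identity for the original $(u,v)$. The only subtlety in the limiting step is the boundary sum: one needs that for smooth $v$ the restriction $\gamma_j v$ genuinely converges to $\gamma_j v$ in $H^{\me}_{00}(\Gamma_j)$ and not merely in a weaker trace space, which is why the space $V$ is defined with the stronger norm built from the $H^{\me}_{00}$ norms on each piece.

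The main obstacle is the density claim for $V$, i.e.\ approximating a function $v \in H^1(\Omega)$ with $\gamma_j v \in H^{\me}_{00}(\Gamma_j)$ by smooth functions converging in that stronger topology. This requires knowing that the trace map $v \mapsto (\gamma_1 v, \dots, \gamma_N v)$ is surjective onto $\prod_j H^{\me}_{00}(\Gamma_j)$ with a continuous right inverse -- essentially the content of Remark~\ref{inv_ spur_dir_rand_st} combined with the characterization of $H^{\me}_{00}(\Gamma_j)$ as the interpolation space that ``does not see'' the endpoints of $\Gamma_j$. Granting the prolongation operator $\omega$ of that remark, one builds a smooth approximant by first mollifying $v$ in the interior, correcting its boundary trace using $\omega$ applied to mollified trace data on each $\Gamma_j$, and checking that the correction stays in $H^{\me}_{00}$ (this is where the distance-weighted term in the $H^{\me}_{00}$ norm must be controlled). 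Once this density is in hand the theorem follows formally. A reference such as [Le] presumably carries out exactly this construction, so in the paper I would sketch the continuity-plus-density skeleton above and defer the delicate $H^{\me}_{00}$ trace-extension estimates to that reference.
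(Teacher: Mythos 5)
The paper itself offers no proof of Theorem~\ref{green_allgemein}: it is stated as an appendix result and referred to [Le], so the comparison here is with the standard argument your skeleton is aiming at. Your continuity-plus-density plan is the right general idea, but it contains an avoidable detour that happens to be exactly the step you flag as the main obstacle. You do not need to approximate $v$ at all: Theorem~\ref{sa:green_rand_stuck} is already valid for every $v\in H^1(\Omega)$, so you may fix the given $v\in V$ and approximate only $u$, taking $u_n\in C^\infty(\overline\Omega)$ with $u_n\to u$ in the graph norm of $H^1(\Omega;P)$ (the density fact quoted in the footnote to Theorem~\ref{sa:green_standard}). Then $\int_\Omega Pu_n\,v\,dx\to\int_\Omega Pu\,v\,dx$ and $\int_\Omega(\nabla v)^tA\,\nabla u_n\,dx\to\int_\Omega(\nabla v)^tA\,\nabla u\,dx$ by Cauchy--Schwarz, while each boundary term converges because $(u_n)_{\nuA}\to u_{\nuA}$ in $H^{\me}_{00}(\Gamma_j)'$ by Theorem~\ref{spur_H_om_P_rand_st} and $\gamma_j v$ is a fixed element of $H^{\me}_{00}(\Gamma_j)$; for smooth $u_n$ the $L^2(\Gamma_j)$ integral coincides with the duality pairing. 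This is precisely where the hypothesis $\gamma_j v\in H^{\me}_{00}(\Gamma_j)$ enters, and the whole proof closes without ever touching the topology you put on $V$.

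By contrast, the density claim your route relies on --- smooth functions dense in $V$ for the norm $\|v\|_{H^1}+\sum_j\|\gamma_j v\|_{H^{\me}_{00}(\Gamma_j)}$ --- is genuinely delicate and is left unproved in your proposal: a generic $C^\infty(\overline\Omega)$ function is not even in $V$, since its trace on $\Gamma_j$ fails the distance-weighted integrability in the $H^{\me}_{00}$ norm unless it vanishes suitably at the junction points $\overline{\Gamma_i}\cap\overline{\Gamma_j}$, so the approximants must be engineered to degenerate there and the convergence of the correction in the weighted norm is exactly the hard estimate you defer to [Le]. Since the simpler one-sided approximation renders that lemma unnecessary, I would restructure the argument as above; as written, your proof is incomplete at its central step, even though the overall strategy is sound.
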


        The next two theorems describe some characteristics of the trace 
of $H^1$--functions, that are needed in the formulation of the iterative 
procedure (IT).

\begin{theorem} Let $N = 2$, i.e. $\partial\Omega = \overline{\Gamma_1} \cup 
\overline{\Gamma_2}$ and $u \in H^1(\Omega)$. If $\gamma_1 \, u$ is a 
$H^{\me}_{00}(\Gamma_1)$ distribution, then $\gamma_2 \, u$ is a 
$H^{\me}_{00}(\Gamma_2)$ distribution.
\label{H_me00_eigenschaft} \end{theorem}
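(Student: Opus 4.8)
The plan is to reduce the claim to a symmetric statement about the two boundary pieces and exploit the characterization of $H^{\me}_{00}(\Gamma_j)$ as the space of $H^{\me}(\Gamma_j)$-functions whose extension by zero to $\partial\Omega$ lies in $H^{\me}(\partial\Omega)$. Since $u \in H^1(\Omega)$, Theorem~\ref{spursatz_rand_st} gives $\gamma\, u \in H^{\me}(\partial\Omega)$, and hence both $\gamma_1\, u$ and $\gamma_2\, u$ belong to $H^{\me}(\Gamma_1)$ and $H^{\me}(\Gamma_2)$ respectively (restriction is continuous). So the only thing at stake is the behavior near the two common endpoints $p_1, p_2$ of the arcs $\Gamma_1$ and $\Gamma_2$: one must show that if $\gamma_1\, u$ has the extra decay/regularity near $p_1, p_2$ that characterizes $H^{\me}_{00}(\Gamma_1)$, then $\gamma_2\, u$ inherits the analogous property near the same points.

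First I would localize. Choosing a smooth partition of unity subordinate to a cover of $\partial\Omega$ by arcs, the only interesting cutoff functions are those supported in a neighborhood of an endpoint $p_i$; away from the endpoints $\gamma_2\, u$ is automatically $H^{\me}$, which is all that is needed there. Near $p_i$, flatten the boundary by a $C^\infty$ diffeomorphism sending a neighborhood of $p_i$ in $\partial\Omega$ to an interval $(-\delta,\delta) \subset \R$, with $\Gamma_1$ corresponding to $(-\delta,0)$ and $\Gamma_2$ to $(0,\delta)$, and $p_i$ to the origin. Under this chart the trace $g := \gamma\, u$ becomes a function in $H^{\me}((-\delta,\delta))$, and the hypothesis says that $g|_{(-\delta,0)}$, extended by zero across the origin, is still in $H^{\me}((-\delta,\delta))$. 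I would then invoke the standard one-dimensional fact (see e.g. [LiMa] or [Gr1]) that for $g \in H^{\me}$ of a line, the property ``extension-by-zero of $g|_{(-\delta,0)}$ stays in $H^{\me}$'' is equivalent to the Hardy-type condition $\int_0^\delta |g(t)|^2\, t^{-1}\, dt < \infty$ (this is precisely the $H^{\me}_{00}$ norm defect term at the endpoint). But this integral condition is symmetric: because $g$ itself is in $H^{\me}((-\delta,\delta))$, the one-sided trace $g(0^-) = g(0^+)$ makes sense only in the averaged sense, and the finiteness of $\int_0^\delta |g(t)|^2 t^{-1} dt$ together with $g \in H^{\me}$ forces $\int_{-\delta}^0 |g(t)|^2 |t|^{-1} dt < \infty$ as well — indeed both are equivalent to $g$ being in $H^{\me}_{00}$ of the whole interval after subtracting, if necessary, the constant value, which here must be zero since the one-sided condition already kills any nonzero limit. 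Translating back through the chart and summing over the (two) endpoints gives $\gamma_2\, u \in H^{\me}_{00}(\Gamma_2)$.

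The main obstacle is the endpoint analysis in the last step: one has to be careful about what ``$g$ vanishes at the common endpoint'' means for an $H^{\me}$ function (which has no pointwise trace in the naive sense) and to see cleanly that the Hardy integral condition transfers from one side to the other. The honest way to handle this is to go through the intrinsic characterization: $\vphi \in H^{\me}_{00}(\Gamma_j)$ iff the zero-extension $E_j \vphi \in H^{\me}(\partial\Omega)$; then the hypothesis reads $E_1(\gamma_1 u) \in H^{\me}(\partial\Omega)$ and, since $\gamma u \in H^{\me}(\partial\Omega)$, we get $E_2(\gamma_2 u) = \gamma u - E_1(\gamma_1 u) + (\text{a correction supported at the two points})$, and the correction is zero because a function in $H^{\me}(\partial\Omega)$ cannot have a jump concentrated at a point. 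Hence $E_2(\gamma_2 u) \in H^{\me}(\partial\Omega)$, which is exactly $\gamma_2 u \in H^{\me}_{00}(\Gamma_2)$. This last identity-of-distributions argument is the crux, and it is what makes the statement genuinely symmetric in $\Gamma_1$ and $\Gamma_2$.
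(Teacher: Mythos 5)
The paper gives no proof of this appendix theorem at all --- it is quoted as a known result, with pointers to [Au], [DaLi], [Gr1], [Le], [LiMa], [Tr] --- so there is no internal argument to compare against; judged on its own, your proposal is correct and is essentially the standard literature proof. The real content is your last paragraph: using the Lions--Magenes characterization that $\vphi \in H^{\me}_{00}(\Gamma_j)$ iff its extension by zero $E_j\vphi$ lies in $H^{\me}(\partial\Omega)$, together with $\gamma u \in H^{\me}(\partial\Omega)$ from Theorem~\ref{spursatz_rand_st}, one writes $E_2(\gamma_2 u) = \gamma u - E_1(\gamma_1 u)$ and concludes. Two small remarks. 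First, the ``correction supported at the two contact points'' needs no argument about jumps: $\Gamma_1$ and $\Gamma_2$ cover $\partial\Omega$ up to a set of measure zero, so $\gamma u = E_1(\gamma_1 u) + E_2(\gamma_2 u)$ holds almost everywhere, hence as an identity in $L^2(\partial\Omega)$ and a fortiori as distributions; there is simply no correction term. Second, your first, localized route (flattening at an endpoint and transferring the Hardy condition $\int |g|^2\, d^{-1}\,dt$ from one side of the junction to the other via the Gagliardo cross-term of the $H^{\me}$ seminorm) is also viable, but it is superseded by the zero-extension argument and its wording about one-sided limits of an $H^{\me}$ function is loose; I would drop it and keep only the extension-by-zero proof, citing [LiMa] (or [Gr1]) both for the characterization of $H^{\me}_{00}$ and for the fact that the weighted-norm space coincides with the closure of $C_0^\infty(\Gamma_j)$ used as the definition in Appendix~A.
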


\begin{theorem}  Let $N = 2$, i.e. $\partial\Omega = \overline{\Gamma_1} \cup 
\overline{\Gamma_2}$ and $u$ be a P--harmonic function in $H^1(\Omega)$. 
If $\gamma_1 \frac{\partial u}{\partial\nuA}$ belongs to 
$H^{\mme}(\Gamma_1)$, than $\gamma_2 \frac{\partial u}{\partial\nuA}$ 
belongs to $H^{\mme}(\Gamma_2)$.
\footnote{One should note that $H^{\mme}(\Gamma_2) = H^{\me}(\Gamma_2)'$, 
because of the identity $H^{\me}_0(\Gamma_2) = H^{\me}(\Gamma_2)$.}
\label{H-me_eigenschaft} \end{theorem}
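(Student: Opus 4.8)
The plan is to reduce the statement about Neumann traces to the companion statement about Dirichlet traces already established in Theorem~\ref{H_me00_eigenschaft}, using the duality $H^{\mme}(\Gamma_j) = H^{\me}(\Gamma_j)'$ recorded in the footnote and Green's formula in the form of Theorem~\ref{green_allgemein}. Concretely, suppose $u \in H^1(\Omega)$ is $P$--harmonic with $\gamma_1 u_{\nuA} \in H^{\mme}(\Gamma_1)$. To show $\gamma_2 u_{\nuA} \in H^{\mme}(\Gamma_2) = H^{\me}(\Gamma_2)'$ it suffices to show that the functional $\psi \mapsto \langle \gamma_2 u_{\nuA}, \psi\rangle$, a priori defined only on the smaller space $H^{\me}_{00}(\Gamma_2)$ by Theorem~\ref{spur_H_om_P_rand_st}, is in fact bounded with respect to the $H^{\me}(\Gamma_2)$--norm, and hence extends to all of $H^{\me}(\Gamma_2)$.

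First I would fix $\psi \in H^{\me}(\Gamma_2)$ and, using the prolongation/restriction machinery of Remark~\ref{inv_ spur_dir_rand_st}, choose a lifting $v \in H^1(\Omega)$ with $\gamma_2 v = \psi$, $\gamma_1 v = 0$, and $\|v\|_{H^1(\Omega)} \le c\,\|\psi\|_{H^{\me}(\Gamma_2)}$; note $\gamma_1 v = 0 \in H^{\me}_{00}(\Gamma_1)$, so by Theorem~\ref{H_me00_eigenschaft} we get $\gamma_2 v = \psi \in H^{\me}_{00}(\Gamma_2)$ automatically --- this is the point that makes the pairing $\langle \gamma_2 u_{\nuA}, \psi\rangle$ legitimate via Theorem~\ref{green_allgemein}. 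Apply Green's formula (Theorem~\ref{green_allgemein}) to the pair $(u,v)$: since $Pu = 0$ in $\Omega$ and $\gamma_1 v = 0$,
$$
    0 \;=\; \int_{\Gamma_1} u_{\nuA}\, v\, d\Gamma + \int_{\Gamma_2} u_{\nuA}\, v\, d\Gamma - \int_{\Omega} (\nabla v)^t A\, \nabla u\, dx
    \;=\; \langle \gamma_1 u_{\nuA}, 0\rangle + \langle \gamma_2 u_{\nuA}, \psi\rangle - \int_{\Omega} (\nabla v)^t A\, \nabla u\, dx ,
$$
so that $\langle \gamma_2 u_{\nuA}, \psi\rangle = \int_{\Omega} (\nabla v)^t A\, \nabla u\, dx$. (If the first boundary term is not literally zero but only involves the \emph{known} datum $\gamma_1 u_{\nuA} \in H^{\mme}(\Gamma_1)$ paired with a lifting having $\gamma_1 v$ controlled, one simply keeps it and bounds it by $\|\gamma_1 u_{\nuA}\|_{H^{\mme}(\Gamma_1)}\|v\|_{H^1(\Omega)}$; I would choose the lifting with $\gamma_1 v = 0$ to avoid this.) Now Cauchy--Schwarz together with the boundedness of $A$ gives $|\langle \gamma_2 u_{\nuA}, \psi\rangle| \le \|A\|_\infty \|\nabla v\|_{L^2}\|\nabla u\|_{L^2} \le C\,\|u\|_{H^1(\Omega)}\,\|\psi\|_{H^{\me}(\Gamma_2)}$, which is exactly the desired bound; taking the supremum over $\psi$ in the unit ball of $H^{\me}(\Gamma_2)$ shows $\gamma_2 u_{\nuA} \in H^{\me}(\Gamma_2)' = H^{\mme}(\Gamma_2)$.

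The main obstacle I anticipate is bookkeeping with the two nested trace spaces: Theorem~\ref{spur_H_om_P_rand_st} only guarantees a priori that $\gamma_2 u_{\nuA}$ is a bounded functional on $H^{\me}_{00}(\Gamma_2)$, which is strictly smaller than $H^{\me}(\Gamma_2)$, so one must be careful that the pairing in Green's formula makes sense and that the improvement to the full dual is genuinely a consequence of the extra hypothesis $\gamma_1 u_{\nuA} \in H^{\mme}(\Gamma_1)$ (without it, the $\Gamma_1$ boundary term would only be a functional on $H^{\me}_{00}(\Gamma_1)$ and the argument would stall). The key leverage is that choosing a lifting $v$ with $\gamma_1 v = 0$ lands us inside the hypotheses of both Theorem~\ref{H_me00_eigenschaft} (to legitimize the $\Gamma_2$ pairing) and Theorem~\ref{green_allgemein}, while simultaneously killing the only term that would require $\gamma_1 u_{\nuA}$ to have better-than-$H^{\mme}$ regularity. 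The remaining estimates are the routine continuity bounds for the lifting operator and for the bilinear form $\int_\Omega (\nabla\cdot)^t A\,\nabla\cdot\,dx$ on $H^1(\Omega)$.
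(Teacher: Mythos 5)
Your argument breaks down at the construction of the lifting, and the failure is exactly at the point the theorem is about. For a general $\psi \in H^{1/2}(\Gamma_2)$ there is no $v \in H^1(\Omega)$ with $\gamma_2 v = \psi$, $\gamma_1 v = 0$ and $\|v\|_{H^1(\Omega)} \le c\,\|\psi\|_{H^{1/2}(\Gamma_2)}$: the glued boundary datum (zero on $\Gamma_1$, $\psi$ on $\Gamma_2$) must belong to $H^{1/2}(\partial\Omega)$, and extension by zero across the interface points preserves $H^{1/2}$ only for $\psi \in H^{1/2}_{00}(\Gamma_2)$, with the $H^{1/2}_{00}$--norm (not the $H^{1/2}$--norm) controlling the extension; already $\psi \equiv 1$ admits no such lifting. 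Your own parenthetical remark is the reductio: if the lifting existed for every $\psi \in H^{1/2}(\Gamma_2)$, then Theorem~\ref{H_me00_eigenschaft} would force $\psi \in H^{1/2}_{00}(\Gamma_2)$ for all such $\psi$, i.e. $H^{1/2}(\Gamma_2) = H^{1/2}_{00}(\Gamma_2)$, which is false (the paper itself notes the only constant in $H^{1/2}_{00}$ is the null function). So the only test functions your duality argument can actually handle are those in $H^{1/2}_{00}(\Gamma_2)$, and for these it merely reproves $\gamma_2 u_{\nuA} \in H^{1/2}_{00}(\Gamma_2)'$, which is Theorem~\ref{spur_H_om_P_rand_st}; no improvement to $H^{-1/2}(\Gamma_2)$ is obtained. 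A second symptom of the same problem is that your proof never uses the hypothesis $\gamma_1 u_{\nuA} \in H^{-1/2}(\Gamma_1)$ — you arrange $\gamma_1 v = 0$ precisely to discard it — yet without that hypothesis the conclusion fails: the obstruction to $\gamma_2 u_{\nuA} \in H^{-1/2}(\Gamma_2)$ lives at the interface points, where the behaviour of the Neumann trace on $\Gamma_1$ matters.

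A repair must bring the hypothesis in to handle test functions that do not vanish at the contact points. One way consistent with the tools quoted in the appendices: for $\psi \in H^{1/2}(\Gamma_2)$ take the prolongation $\Psi = \omega\psi \in H^{1/2}(\partial\Omega)$ (the operator $\omega$ of the remark following Theorem~\ref{spursatz_rand_st}, i.e. a bounded extension, not extension by zero), pair it with the global Neumann trace $u_{\nuA} \in H^{-1/2}(\partial\Omega)$ furnished by Theorem~\ref{spur_H_om_P}, and subtract the $\Gamma_1$ contribution $\langle \gamma_1 u_{\nuA}, \Psi|_{\Gamma_1}\rangle$, which is finite and bounded by $\|\gamma_1 u_{\nuA}\|_{H^{-1/2}(\Gamma_1)}\,\|\Psi\|_{H^{1/2}(\partial\Omega)}$ exactly because of the hypothesis. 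Both terms are $\le C\|\psi\|_{H^{1/2}(\Gamma_2)}$, so $\psi \mapsto \langle u_{\nuA}, \omega\psi\rangle_{\partial\Omega} - \langle \gamma_1 u_{\nuA}, (\omega\psi)|_{\Gamma_1}\rangle$ defines the required bounded functional on $H^{1/2}(\Gamma_2)$, and one checks it agrees with the existing pairing on $H^{1/2}_{00}(\Gamma_2)$. Note that the paper itself does not prove Theorem~\ref{H-me_eigenschaft} in the text but defers to the references (in particular [Le]), so the comparison here is with what a correct argument must contain rather than with a displayed proof.
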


\section{Mixed boundary value problems}

        For the analysis of mixed problems we need a type of {\em 
Poincar\'e inequality} on the space $H^1_0(\Omega\cup\Gamma_j)$. This 
is obtained with theorem

\begin{theorem}  Given a function $u \in H^1_0(\Omega\cup\Gamma_j)$ we have
$$  ||u||_{L^2(\Omega)} \ \leq \ c \, ||\nabla u||_{L^2(\Omega)} ,  $$
\noindent   where the constant $c$ depends only on $\Omega$.%
\footnote{For a detailed proof see [Tr] pp. 69.}
\label{poincarre} \end{theorem}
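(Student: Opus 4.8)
The plan is to argue by contradiction, using the compactness of the Sobolev embedding $H^1(\Omega) \hookrightarrow L^2(\Omega)$. Suppose no constant $c$ with the stated property exists. Then for every $n \in \N$ there is a function $u_n \in H^1_0(\Omega\cup\Gamma_j)$ with $\|u_n\|_{L^2(\Omega)} = 1$ and $\|\nabla u_n\|_{L^2(\Omega)} \le 1/n$. In particular the sequence $\{u_n\}$ is bounded in $H^1(\Omega)$.

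Since $\Omega$ is bounded and regular, the embedding $H^1(\Omega) \hookrightarrow L^2(\Omega)$ is compact (Rellich--Kondrachov) and $H^1(\Omega)$ is reflexive, so after passing to a subsequence we may assume $u_n \rightharpoonup u$ weakly in $H^1(\Omega)$ and $u_n \to u$ strongly in $L^2(\Omega)$. The subspace $H^1_0(\Omega\cup\Gamma_j)$ is closed and convex, hence weakly closed (Mazur), so $u \in H^1_0(\Omega\cup\Gamma_j)$. Strong $L^2$--convergence yields $\|u\|_{L^2(\Omega)} = 1$, while weak lower semicontinuity of the gradient norm yields $\|\nabla u\|_{L^2(\Omega)} \le \liminf_{n} \|\nabla u_n\|_{L^2(\Omega)} = 0$.

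Consequently $\nabla u = 0$ a.e.\ in $\Omega$, and since $\Omega$ is connected, $u$ equals a constant $a$ a.e. Because $u \in H^1_0(\Omega\cup\Gamma_j)$, its trace on $\partial\Omega\setminus\Gamma_j$ vanishes: the defining $C_0^\infty(\Omega\cup\Gamma_j)$--sequence converges in $H^1(\Omega)$ and the boundary trace operator of Theorem~\ref{spursatz_rand_st} is continuous. As $\partial\Omega\setminus\Gamma_j$ has positive $1$--dimensional measure (this is implicit in the standing decomposition of $\partial\Omega$, and is exactly what distinguishes $H^1_0(\Omega\cup\Gamma_j)$ from $H^1(\Omega)$), we conclude $a = 0$, i.e.\ $u \equiv 0$, contradicting $\|u\|_{L^2(\Omega)} = 1$. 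This proves the inequality, and the constant produced depends only on $\Omega$ together with the fixed decomposition of its boundary.

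The step I expect to be the main obstacle is identifying the weak $H^1$--limit as an element of $H^1_0(\Omega\cup\Gamma_j)$ and converting membership in this space into the vanishing of the trace on $\partial\Omega\setminus\Gamma_j$; once one invokes the continuity of the boundary trace (Theorem~\ref{spursatz_rand_st}) and the weak closedness of the convex closed subspace, this is routine. An alternative, more hands-on route would extend each $u_n$ by zero across the "clamped" part of the boundary into an enclosing rectangle and integrate along coordinate segments issuing from that part, which gives an explicit constant; but the geometry of $\partial\Omega\setminus\Gamma_j$ makes that bookkeeping less transparent, so I would prefer the compactness argument above (this is, in effect, the argument behind the reference [Tr] cited in the footnote).
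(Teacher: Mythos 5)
Your proof is correct. Note that the paper itself contains no argument for this theorem: it is stated as a known result, with the footnote referring to [Tr], p.~69, for a detailed proof, so there is no internal proof to compare against; your compactness argument is the standard self-contained route and it works in the paper's setting. The ingredients all check out: $\Omega$ is bounded with smooth boundary, so the embedding $H^1(\Omega)\subset L^2(\Omega)$ is compact; $\Omega$ is (simply) connected, so a limit with vanishing gradient is constant; $H^1_0(\Omega\cup\Gamma_j)$ is by definition the $H^1$--closure of $C^\infty_0(\Omega\cup\Gamma_j)$, hence a closed subspace and therefore weakly closed, and the continuity of the Dirichlet trace operator (Theorem~\ref{spursatz_rand_st}) shows that the limit has vanishing trace on each remaining piece $\Gamma_i$, $i\neq j$, which is a nonempty open arc and hence of positive length, forcing the constant to be zero and yielding the contradiction. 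Two minor points are worth recording: the compactness argument produces no explicit constant (harmless here), and the constant depends on the fixed decomposition of $\partial\Omega$ (i.e.\ on which $\Gamma_j$ carries the Neumann data) as well as on $\Omega$, which is consistent with the standing assumptions even though the statement mentions only $\Omega$. Your alternative suggestion (extending by zero across the clamped part of the boundary and integrating along segments) would indeed give an explicit constant, but the argument you gave is complete as it stands.
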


        We analyze the following mixed problem problem at $\Omega$. Given 
the functions $f \in H^{\me}(\Gamma_1)$ and $g \in H^{\me}_{00}(\Gamma_2)'$, 
find a $H^1$--solution of

\medskip \noindent
$  (GP)     \hfill \left\{ \begin{array}{rl}
                   \Delta u = 0  & ,\ \mbox{in}\ \Omega \\
                   u = f         & ,\ \mbox{at}\ \Gamma_1 \\
                   u_\nu = g     & ,\ \mbox{at}\ \Gamma_2
            \end{array} \right. . \hfill $
\medskip

\noindent  Existence, unicity and continuous dependency of the data for (GP) 
are given by the following theorem of Lax--Milgramm type.

\begin{theorem}  For every pair of data $(f,g) \in H^{\me}(\Gamma_1) \times 
H^{\me}_{00}(\Gamma_2)'$ the problem (GP) has a unique solution $u \in 
H^1(\Omega)$. Further it holds
\begin{equation}
    ||u||_{H^1(\Omega)} \ \leq \ c \left( ||f||_{H^{\me}(\Gamma_1)}\ +\
                                      ||g||_{H^{\me}_{00}(\Gamma_2)'} \right).
\label{ungl_stetig_abhang} \end{equation}
\label{ex_eind_gemischt} \end{theorem}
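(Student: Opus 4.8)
The plan is to establish existence, uniqueness and continuous dependence for the mixed problem (GP) by recasting it as a variational problem on the Hilbert space $V := H^1_0(\Omega\cup\Gamma_1)$ and invoking the Lax--Milgram theorem. First I would reduce to the case of homogeneous Dirichlet data on $\Gamma_1$: using Remark~\ref{inv_ spur_dir_rand_st}, lift $f \in H^{\me}(\Gamma_1)$ to some $F \in H^{\me}(\partial\Omega)$ and then (via the trace Theorem~\ref{spursatz_rand_st} and its right inverse) to a function $\tilde f \in H^1(\Omega)$ with $\gamma_1 \tilde f = f$ and $\|\tilde f\|_{H^1(\Omega)} \le c\,\|f\|_{H^{\me}(\Gamma_1)}$. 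Writing $u = \tilde f + u_0$ with $u_0 \in V$, the problem becomes: find $u_0 \in V$ such that $\int_\Omega (\nabla v)^t A\,\nabla u_0\,dx = \langle g, \gamma_2 v\rangle - \int_\Omega (\nabla v)^t A\,\nabla \tilde f\,dx$ for all $v \in V$, where the duality pairing $\langle g, \gamma_2 v\rangle$ makes sense because $\gamma_2 v \in H^{\me}_{00}(\Gamma_2)$ by Theorem~\ref{H_me00_eigenschaft} (applied with the roles of $\Gamma_1,\Gamma_2$ suitably interpreted) and $g \in H^{\me}_{00}(\Gamma_2)'$.

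Next I would verify the hypotheses of Lax--Milgram for the bilinear form $a(u_0,v) := \int_\Omega (\nabla v)^t A\,\nabla u_0\,dx$ on $V$. Boundedness is immediate from $a_{i,j}\in L^\infty(\Omega)$ together with Cauchy--Schwarz. Coercivity is the crux: the ellipticity condition in (\ref{coef_beding}) gives $a(u_0,u_0) \ge \alpha \|\nabla u_0\|^2_{L^2(\Omega)}$, and the Poincar\'e-type inequality of Theorem~\ref{poincarre} on $H^1_0(\Omega\cup\Gamma_1)$ upgrades $\|\nabla u_0\|_{L^2(\Omega)}$ to an equivalent norm on $V$, so $a(u_0,u_0) \ge \alpha' \|u_0\|^2_{H^1(\Omega)}$. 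For the right-hand side, I would check that $v \mapsto \langle g, \gamma_2 v\rangle - \int_\Omega (\nabla v)^t A\,\nabla \tilde f\,dx$ is a bounded linear functional on $V$: the second term is controlled by $\|\tilde f\|_{H^1(\Omega)}$, and the first by $\|g\|_{H^{\me}_{00}(\Gamma_2)'}$ times the norm of the trace operator $\gamma_2: V \to H^{\me}_{00}(\Gamma_2)$, which is bounded by Theorem~\ref{H_me00_eigenschaft}. Lax--Milgram then yields a unique $u_0$, hence a unique $u = \tilde f + u_0 \in H^1(\Omega)$ solving (GP) weakly, and the estimate $\|u_0\|_{H^1(\Omega)} \le (1/\alpha')\big(\|g\|_{H^{\me}_{00}(\Gamma_2)'} \|\gamma_2\| + C\|\tilde f\|_{H^1(\Omega)}\big)$ combines with the lifting bound on $\tilde f$ to give (\ref{ungl_stetig_abhang}).

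Finally I would confirm that the variational solution actually satisfies the boundary conditions in the claimed sense: $u = f$ on $\Gamma_1$ holds by construction since $u - \tilde f \in H^1_0(\Omega\cup\Gamma_1)$, and the Neumann condition $u_\nu = g$ on $\Gamma_2$ is recovered by testing against $v \in V$ and using Green's formula (Theorem~\ref{green_allgemein}) to identify $\gamma_n u$ with $g$ as elements of $H^{\me}_{00}(\Gamma_2)'$; here one uses that $u$ is harmonic so $u \in H^1(\Omega;\Delta)$ and the Neumann trace of Theorem~\ref{spur_H_om_P_rand_st} is well-defined.

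I expect the main obstacle to be the bookkeeping around the $H^{\me}_{00}$ spaces and their duals on the two boundary pieces --- specifically, making sure the duality pairing $\langle g, \gamma_2 v\rangle$ is legitimate and that the trace $\gamma_2 v$ lands in $H^{\me}_{00}(\Gamma_2)$ (not merely $H^{\me}(\Gamma_2)$) for every $v \in H^1_0(\Omega\cup\Gamma_1)$, which is exactly what Theorem~\ref{H_me00_eigenschaft} supplies but must be invoked with care regarding which $\Gamma_j$ carries the homogeneous Dirichlet condition. The rest (boundedness, coercivity, the lifting) is routine given Theorem~\ref{poincarre} and the trace theorems of Appendix~A.
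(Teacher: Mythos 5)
The paper itself does not prove Theorem~\ref{ex_eind_gemischt}: it is stated in Appendix~C as a result ``of Lax--Milgram type'' and implicitly deferred to the standard references cited in the appendices. Your variational argument --- lift the Dirichlet datum $f$, pose the problem for the bilinear form $a(u_0,v)=\int_\Omega (\nabla v)^t A\,\nabla u_0\,dx$ on the subspace of $H^1(\Omega)$ whose elements have vanishing trace on $\Gamma_1$, obtain coercivity from the ellipticity condition (\ref{coef_beding}) together with the Poincar\'e inequality of Theorem~\ref{poincarre}, control the $g$-term through the pairing with $\gamma_2 v\in H^{\me}_{00}(\Gamma_2)$ supplied by Theorem~\ref{H_me00_eigenschaft}, and recover the Neumann condition via Green's formula (Theorem~\ref{green_allgemein}) --- is exactly the intended proof, and it yields the estimate (\ref{ungl_stetig_abhang}).

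One concrete correction is needed: you have the space named backwards. With the paper's Appendix~A convention, $H^1_0(\Omega\cup\Gamma_1)$ is the closure of $C_0^\infty(\Omega\cup\Gamma_1)$, so its elements vanish on $\Gamma_2$ and are \emph{free} on $\Gamma_1$ --- the opposite of what your argument requires. Read literally, the formulation then fails: $u-\tilde f\in H^1_0(\Omega\cup\Gamma_1)$ does not enforce $u=f$ on $\Gamma_1$, and $\gamma_2 v=0$ for your test functions, so the $g$-term drops out entirely. The space you mean is $V=H^1_0(\Omega\cup\Gamma_2)$, which is precisely the space the paper uses for this boundary configuration in the proof of Lemma~\ref{neue_norm}. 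Since the rest of your argument consistently treats $V$ as the functions with zero trace on $\Gamma_1$, this is a notational inversion rather than a conceptual gap, but it must be fixed. A minor additional point: Theorem~\ref{H_me00_eigenschaft} asserts only the membership $\gamma_2 v\in H^{\me}_{00}(\Gamma_2)$, so the boundedness of $\gamma_2:V\to H^{\me}_{00}(\Gamma_2)$, which you use to estimate the $g$-term, deserves a word (closed graph, or the quantitative form of that statement in the references).
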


        The next theorem investigates the regularity of the $H^1$--solution 
of (GP). For a detailed proof see [Gr1,2] or [Wn].

\begin{theorem}  For boundary data $f \in H^{\tme}(\Gamma_1)$ and $g \in 
H^{\me}(\Gamma_2)$, the $H^1$--solution $u$ of $(GP)$ can be written as
\begin{equation} u \ =\ h \ +\ \sum_{i=1}^2{\alpha_i\,u_i}\, , \end{equation}
\label{darstel_losung_gemischt} \end{theorem}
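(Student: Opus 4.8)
The plan is to recast $(GP)$ with the data $f\in H^{\tme}(\Gamma_1)$, $g\in H^{\me}(\Gamma_2)$ as a problem with homogeneous mixed data plus a fixed right-hand side, and then to apply the known regularity theory for the mixed (Zaremba) problem on a domain whose boundary has corners exactly at $\overline{\Gamma_1}\cap\overline{\Gamma_2}$. First I would use Remark~\ref{inv_ spur_dir_rand_st} to pick a lifting $w_f\in H^2(\Omega)$ of $f$ with $\gamma_1 w_f=f$, and similarly a function $w_g\in H^2(\Omega)$ whose conormal derivative on $\Gamma_2$ equals $g$ (again using that the trace map of Theorem~\ref{spursatz_rand_st} has a bounded right inverse, together with the prolongation from $\Gamma_2$ to $\partial\Omega$). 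Setting $\tilde u:=u-w_f-w_g$, the function $\tilde u$ solves $\Delta\tilde u=F$ in $\Omega$ with $F:=-\Delta(w_f+w_g)\in L^2(\Omega)$, $\tilde u=0$ on $\Gamma_1$, $\tilde u_\nu=0$ on $\Gamma_2$; thus it suffices to prove the splitting for $\tilde u$, since $w_f+w_g\in H^2(\Omega)\subset H^1(\Omega;P)$ can be absorbed into the regular part $h$.

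Next I would invoke the Kondrat'ev–Grisvard corner analysis for the mixed problem: near each of the two points $P_1,P_2$ where the boundary condition switches from Dirichlet to Neumann, the $H^1$ solution of the Laplace equation with $L^2$ right-hand side and smooth (here zero) boundary data decomposes into an $H^2$ part plus a finite linear combination of explicit singular functions $S_i$ determined by the opening angle $\omega_i$ at $P_i$ and the Dirichlet/Neumann pattern. For the mixed Dirichlet–Neumann configuration the leading singular exponent is $\pi/(2\omega_i)$, so for generic (in particular, for $\omega_i<\pi$) angles there is exactly one singular function per corner that fails to lie in $H^2$; this accounts for the two terms $\alpha_1 u_1+\alpha_2 u_2$ in the statement, with $h\in H^2(\Omega)$ and $\alpha_i\in\R$, and the construction gives the a priori bound
$$
  \|h\|_{H^2(\Omega)}+|\alpha_1|+|\alpha_2|\ \le\ c\big(\|f\|_{H^{\tme}(\Gamma_1)}+\|g\|_{H^{\me}(\Gamma_2)}\big),
$$
by combining with the continuous dependence estimate (\ref{ungl_stetig_abhang}) of Theorem~\ref{ex_eind_gemischt}. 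The functions $u_i$ are fixed once and for all (cutoff times the model singularity in polar coordinates centred at $P_i$), independent of the data.

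The main obstacle is purely a bookkeeping-and-citation matter rather than a conceptual one: one must check that the regularity shift really lands in $H^2$ and not merely in some intermediate space, i.e. that no second singular exponent enters below $1$, and that the data regularity $f\in H^{\tme}(\Gamma_1)$, $g\in H^{\me}(\Gamma_2)$ is exactly the threshold at which a single corner singularity appears — stronger data would push $h$ into $H^2$ with no singular correction only if the angle were special, weaker data would produce more singular terms. I would handle this by reducing, as above, to homogeneous boundary data and an $L^2$ right-hand side, where the classical statement (Grisvard, \emph{Elliptic Problems in Nonsmooth Domains}, and [Gr1,2] or [Wn] cited before the theorem) applies verbatim, and then transporting the two liftings back. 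No delicate new estimate is needed beyond the already-quoted trace theorems of Appendix~A, Green's formula from Appendix~B, and the well-posedness Theorem~\ref{ex_eind_gemischt}.
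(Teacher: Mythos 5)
The paper itself offers no proof of Theorem~\ref{darstel_losung_gemischt}: it is quoted from the literature, the preceding sentence referring the reader to [Gr1,2] or [Wn] for the details. Your sketch reconstructs precisely the argument of those references --- reduction to homogeneous mixed boundary data with an $L^2$ right-hand side, then the Kondrat'ev--Grisvard corner analysis at the two points where the boundary condition switches from Dirichlet to Neumann --- so in substance you are supplying the proof the paper merely cites, and the route is the correct one.

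Two points in your write-up need repair. First, the lifting step as written fails: if $w_f$ and $w_g$ are chosen independently, then $\tilde u := u - w_f - w_g$ satisfies $\tilde u|_{\Gamma_1} = -\,w_g|_{\Gamma_1}$ and $\tilde u_\nu|_{\Gamma_2} = -\,(w_f)_\nu|_{\Gamma_2}$, which need not vanish, so the data are not homogenized. You must either take a single $w \in H^2(\Omega)$ whose Dirichlet trace extends $f$ and whose Neumann trace extends $g$ simultaneously (possible since the pair trace $u \mapsto (\gamma u, \gamma u_\nu)$ maps $H^2(\Omega)$ onto $H^{\tme}(\partial\Omega) \times H^{\me}(\partial\Omega)$ by Theorem~\ref{spursatz_rand_st}, combined with a prolongation of $f$ and $g$ to all of $\partial\Omega$), or choose $w_f$ with vanishing conormal derivative on $\Gamma_2$ and $w_g$ with vanishing trace on $\Gamma_1$. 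Second, the angle bookkeeping: here $\partial\Omega$ is smooth, so the opening at each junction point is $\omega_i = \pi$ and the leading mixed exponent is $\pi/(2\omega_i) = \me$, which is exactly what produces the two singular functions $u_i = r_i^{\me}\sin(\theta_i/2)$ of the statement; your parenthetical claim that any $\omega_i < \pi$ yields exactly one sub-$H^2$ singular term is false for $\omega_i \le \pi/2$ (then $\pi/(2\omega_i) \ge 1$ and no singular correction is needed), and the remark that more regular data would suppress the singular part is misleading --- the $r^{\me}$ singularity is generically present even for $C^\infty$ data, the coefficients $\alpha_i$ being determined by the solution, not by a regularity threshold of $(f,g)$. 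These are local repairs; the core of your argument matches the proofs in [Gr1,2] and [Wn], together with the well-posedness estimate of Theorem~\ref{ex_eind_gemischt}.
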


\noindent  where $h \in H^2(\Omega)$, $\alpha_i \in \R$ and $u_i$ are the 
singular $H^1$--functions
$$  u_i(r,\theta) \ = \ r_i^{\me} \sin{\frac{\theta_i}{2}} .  $$

\noindent  Here $r_1$ (respectively $r_2$) is the distance from $z = 
(r,\theta) \in \Omega$ to the contact point $p_a$ between $\Gamma_1$ and 
$\Gamma_2$ (respectively $p_b$ between $\Gamma_2$ and $\Gamma_1$); $\theta_1$ 
(respectively $\theta_2$) is the angle between $z-p_a$ (respectively 
$z-p_b$) and the line tangent to $\partial\Omega$ at $p_a$ (respectively 
at $p_b$) in the direction of $\Gamma_1$.

\section{Unicity results for Cauchy problems}

        What we present now is a generalisation of some classical results 
concerning the theory of diferential operators with analytical coefficients. 
We use the Cauchy--Kowalewsky and Holmgren theorems%
\footnote{The Cauchy--Kowalewsky theorem can be found in [Fo] pp. 69, [DaLi] 
or [Jo]; for details on the Holmgren theorem one may see [Jo] pp. 65 or 
[DaLi].}
together with a regularity theorem for weak solutions of elliptic 
equations, to guarantee the uniqueness of $H^1$--solutions of Cauchy 
problems.%
\footnote{See also [Is].}

\begin{theorem}  Let $L$ be a linear differential operator of order 2 with 
$C^\infty(\Omega)$--coefficients, where $\Omega \subset \R^n$ is an open 
regular set. Define $a(\cdot,\cdot)$ the respective bilinear form, which is 
supposed to be strong coercive. If the distribution $u$ is a solution of 
$Lu = \psi$ with $\psi \in H^k_{\rm loc}(\Omega),\ k\in \N$, then 
$u \in H^{k+2}_{\rm loc}(\Omega)$.%
\footnote{The theorem still holds for differential operators of order $2m$. 
In this case we conclude $u \in H^{k+2m}_{\rm loc}(\Omega)$.}
\label{lokal_Reg}\end{theorem}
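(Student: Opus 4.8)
The statement to prove is the local elliptic regularity theorem: if $L$ is a second‑order linear operator with $C^\infty$ coefficients on a regular open set $\Omega\subset\R^n$, with strongly coercive associated bilinear form $a(\cdot,\cdot)$, and $u$ solves $Lu=\psi$ weakly with $\psi\in H^k_{\rm loc}(\Omega)$, then $u\in H^{k+2}_{\rm loc}(\Omega)$. The plan is to run the classical interior regularity argument, which is local and based on the difference‑quotient (Nirenberg) technique, followed by a bootstrap. First I would fix an arbitrary point $x_0\in\Omega$ and a cutoff $\zeta\in C_0^\infty(\Omega)$ equal to $1$ near $x_0$, and reduce everything to an estimate on the support of $\zeta$; since the conclusion $u\in H^{k+2}_{\rm loc}$ is local, it suffices to produce, for each such $x_0$, a neighborhood on which $u$ has the claimed regularity.

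The base step ($k=0$) is the heart of the matter. Here $u\in H^1_{\rm loc}$ and $\psi\in L^2_{\rm loc}$, and one wants $u\in H^2_{\rm loc}$. The standard approach is to test the weak formulation $a(u,v)=\langle\psi,v\rangle$ with $v=-D_i^{-h}(\zeta^2 D_i^h u)$, the second‑order difference quotient against a cutoff, where $D_i^h$ is the difference quotient in direction $e_i$. Using the strong coercivity of $a$ to bound $\|\nabla D_i^h u\|_{L^2}$ on the support of $\zeta$ in terms of $\|\psi\|_{L^2}$, $\|u\|_{H^1}$ and the $C^1$ norm of the coefficients (the terms where the difference quotient lands on the coefficients are controlled because the coefficients are $C^\infty$, hence Lipschitz locally), one obtains a bound on $\|D_i^h u\|_{H^1}$ uniform in $h$. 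Letting $h\to0$ and invoking the standard difference‑quotient lemma (uniform $L^2$ bounds on $D_i^h w$ imply $\partial_i w\in L^2$) gives $\partial_i\nabla u\in L^2_{\rm loc}$ for every $i$, i.e. $u\in H^2_{\rm loc}$.

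For the inductive step, suppose the result holds up to order $k-1$ and let $\psi\in H^k_{\rm loc}$. By the inductive hypothesis $u\in H^{k+1}_{\rm loc}$. Differentiate the equation: for a multi‑index $\alpha$ with $|\alpha|=1$, $D^\alpha u$ satisfies (weakly, locally) an equation $L(D^\alpha u)=D^\alpha\psi-[D^\alpha,L]u$, where the commutator $[D^\alpha,L]u$ involves only derivatives of the (smooth) coefficients times derivatives of $u$ of order $\le k+1$, hence lies in $H^{k-1}_{\rm loc}$; and $D^\alpha\psi\in H^{k-1}_{\rm loc}$. Applying the inductive hypothesis to $D^\alpha u$ yields $D^\alpha u\in H^{k+1}_{\rm loc}$ for all $|\alpha|=1$, i.e. $u\in H^{k+2}_{\rm loc}$. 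Since everything is local and $x_0$ was arbitrary, the theorem follows. The main obstacle is the base step: carefully managing the terms in the difference‑quotient computation where $D_i^h$ falls on the cutoff or on the coefficients, and verifying that strong coercivity (rather than mere coercivity) is exactly what absorbs the leading term so the estimate closes — I would present that computation in the detail it deserves and treat the induction as a routine commutator bookkeeping exercise.
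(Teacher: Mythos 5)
The paper never proves this statement: Theorem~\ref{lokal_Reg} is quoted in Appendix~D as a classical interior regularity result (the reader is pointed to the literature, e.g.\ [DaLi], [Fo], [Jo]) and is only used as a black box, together with Cauchy--Kowalewsky and Holmgren, to obtain uniqueness of $H^1$--solutions in Theorem~\ref{schwach_eindeut}. Your proposal supplies the standard proof and it is essentially correct: the difference--quotient (Nirenberg) argument with a cutoff, testing with $v=-D_i^{-h}(\zeta^2 D_i^h u)$ and using coercivity to absorb the leading term, gives the base case $k=0$, and the bootstrap via $L(D^\alpha u)=D^\alpha\psi-[D^\alpha,L]u$ is sound, since $[D^\alpha,L]$ is again a second--order operator with smooth coefficients, so $[D^\alpha,L]u\in H^{k-1}_{\rm loc}$ once the inductive hypothesis has given $u\in H^{k+1}_{\rm loc}$.

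One caveat deserves explicit mention. The theorem as stated assumes only that $u$ is a \emph{distribution} solving $Lu=\psi$, whereas your base step starts from $u\in H^1_{\rm loc}$: both the weak formulation $a(u,v)=\langle\psi,v\rangle$ and the admissibility of the test function $v=-D_i^{-h}(\zeta^2 D_i^h u)$ presuppose that much regularity. To cover genuinely distributional solutions you would need a preliminary lifting step (mollification or a parametrix argument, i.e.\ the hypoellipticity of elliptic operators with smooth coefficients) before the difference--quotient machinery applies; alternatively, one reads the hypothesis as $u\in H^1_{\rm loc}$, which is all the paper ever uses, since the theorem is applied to $H^1(\Omega)$ solutions of the Cauchy problem. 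A minor further remark: strong coercivity is more than is needed --- interior regularity only requires ellipticity of the principal part (a G\aa rding inequality suffices) --- but since coercivity is part of the hypothesis, invoking it to close the estimate is perfectly legitimate.
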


\begin{remark}  Under the same assumptions as in Theorem~\ref{lokal_Reg} it 
follows from the assumption $\psi \in C^\infty(\Omega)$ that $u$ belongs to 
$C^\infty(\Omega)$.
\label{RegKor} \end{remark}

        We can now state the uniqueness result for Cauchy problems.

\begin{theorem}  Let $\Omega$ be an open, bounded and simply connected set 
of $\R^2$ with analytical boundary $\partial\Omega$. Let $\Gamma$ be an 
open simply connected subset of $\partial\Omega$ and the differential 
operator $L$ defined as in Theorem~\ref{lokal_Reg}. Then the Cauchy--Problem
$$                   \left\{  \begin{array}{ll}
                          Lu = \psi       &,\, {\rm in}\ \Omega \\
                          u = f           &,\, {\rm at}\ \Gamma \\
                          u_\nu = g       &,\, {\rm at}\ \Gamma
                     \end{array} \right.  $$
\noindent  has for $\psi \in L^2(\Omega)$, $f \in H^{\me}(\Gamma)$ and $g 
\in H^{\me}_{00}(\Gamma)'$ at most one solution in $H^1(\Omega)$.
\label{schwach_eindeut} \end{theorem}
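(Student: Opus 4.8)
The plan is to reduce the weak uniqueness statement to the classical Holmgren uniqueness theorem via an interior regularity argument. Suppose $u_1, u_2 \in H^1(\Omega)$ are two solutions of the Cauchy problem with the same data $(\psi, f, g)$, and set $u := u_1 - u_2$. Then $u \in H^1(\Omega)$ solves the homogeneous equation $Lu = 0$ in $\Omega$ (in the weak sense), with $\gamma_\Gamma u = 0$ and $u_\nu = 0$ on $\Gamma$ in the trace sense. The goal is to show $u \equiv 0$.

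First I would upgrade the regularity of $u$ in the interior: since $Lu = 0 \in C^\infty(\Omega)$, Theorem~\ref{lokal_Reg} together with Remark~\ref{RegKor} gives $u \in C^\infty(\Omega)$. This is not yet enough to quote Holmgren directly, because that theorem needs the Cauchy data to be taken in a classical sense on an open piece of a (real-)analytic hypersurface, and a priori we only know $u \in H^1(\Omega)$ up to $\partial\Omega$. The standard device here is to extend $u$ across $\Gamma$. Because $\partial\Omega$ is analytic and $\Gamma$ is an open, simply connected subarc, one can flatten $\Gamma$ locally by an analytic change of coordinates, so that $L$ becomes an elliptic operator with $C^\infty$ coefficients in a half-disc and $\Gamma$ becomes a flat diameter; the homogeneous conormal and Dirichlet conditions $u = 0$, $u_{\nu_A} = 0$ on this flat piece then allow an even/odd type reflection (a ``Cauchy data zero'' reflection for second-order elliptic operators — cf. the Schwarz reflection principle invoked in Section~3.3) producing a function $\tilde u$ defined on a full neighbourhood $U$ of an interior point $p \in \Gamma$, with $L\tilde u = 0$ weakly in $U$. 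Applying Theorem~\ref{lokal_Reg}/Remark~\ref{RegKor} again, $\tilde u \in C^\infty(U)$, and $\tilde u$ vanishes together with its first derivatives on $\Gamma \cap U$.

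Now I would apply Holmgren's uniqueness theorem to $\tilde u$ at the analytic (indeed, after flattening, flat) hypersurface $\Gamma$: since $L$ has analytic leading coefficients after this analytic flattening — and here one must be careful, so alternatively one keeps $L$ with its $C^\infty$ coefficients but uses that $\Gamma$ is non-characteristic and invokes the Cauchy--Kowalewsky/Holmgren circle of ideas in the form that guarantees a classical solution with prescribed analytic Cauchy data agrees with $\tilde u$ near $\Gamma$ — one concludes $\tilde u \equiv 0$ in a one-sided neighbourhood of $\Gamma$ inside $\Omega$, i.e. $u$ vanishes on a nonempty open subset $\omega \subset \Omega$. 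Finally, since $u \in C^\infty(\Omega)$ solves the elliptic equation $Lu = 0$ and vanishes on the open set $\omega$, the unique continuation property for second-order elliptic operators with smooth coefficients (which itself follows from Holmgren/Aronszajn-type arguments, and is exactly the $H^1$-version of Cauchy--Kowalewsky--Holmgren alluded to in Section~1.2) forces $u \equiv 0$ on the connected set $\Omega$. Hence $u_1 = u_2$.

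\textbf{Main obstacle.} The delicate point is the passage from the weak (trace-sense) vanishing of the Cauchy data on $\Gamma$ to a setting where the classical Holmgren theorem literally applies: one needs that the homogeneous Cauchy conditions in $H^{\me}(\Gamma) \times H^{\me}_{00}(\Gamma)'$ genuinely force the smooth function $u$ (smooth only in the open set $\Omega$) to have vanishing Cauchy data across $\Gamma$ in a way that survives the reflection/flattening. Justifying the reflection across an analytic boundary arc for the operator $P$ with merely $C^\infty$ coefficients — and checking that the reflected coefficients retain enough regularity for the regularity theorem and for unique continuation to apply — is the technical heart of the argument; everything else (the two applications of Theorem~\ref{lokal_Reg}, the final unique continuation step) is routine once this reduction is in place.
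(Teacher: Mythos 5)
The paper itself does not spell out a proof of Theorem~\ref{schwach_eindeut}: it only announces the strategy (interior regularity via Theorem~\ref{lokal_Reg}/Remark~\ref{RegKor} combined with the Cauchy--Kowalewsky/Holmgren circle of results, deferring details to [Jo], [DaLi], [Is] and the thesis [Le]), and your outline follows that same route. The concrete gap in your write-up is the Holmgren step. Holmgren's theorem (and Cauchy--Kowalewsky) requires \emph{analytic} coefficients, while $L$ as defined in Theorem~\ref{lokal_Reg} has only $C^\infty(\Omega)$ coefficients; an analytic flattening of $\Gamma$ cannot manufacture analyticity of the coefficients. Your fallback sentence --- that a classical solution with prescribed analytic Cauchy data ``agrees with $\tilde u$ near $\Gamma$'' --- is itself a uniqueness assertion for the non-analytic operator, i.e. exactly what is being proved, so as written that step is circular. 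What is actually needed, both for the vanishing of $\tilde u$ in a one-sided neighbourhood of $\Gamma$ and for the propagation of the zero set through the connected set $\Omega$, is a unique continuation theorem valid for non-analytic coefficients: for second-order elliptic operators in the plane this is available (Carleman/Aronszajn-type results, or the result of Isakov cited as [Is] in the paper's footnote). Once such a theorem is invoked, the Holmgren detour is unnecessary; without it, the argument does not close.

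A secondary point, which also simplifies what you called the technical heart: since \emph{both} the Dirichlet trace and the conormal trace of $u=u_1-u_2$ vanish on $\Gamma$, no even/odd reflection is needed. Extend $u$ by zero across $\Gamma$ into a small exterior neighbourhood of an interior point of $\Gamma$ and extend the coefficients so as to keep ellipticity; testing against $C^\infty_0$ functions supported in that neighbourhood and using the generalized Green formula (Theorem~\ref{green_allgemein}) --- which is precisely how the data $u_{|_\Gamma}=0$ in $H^{\me}(\Gamma)$ and $u_{\nuA|_\Gamma}=0$ in $H^{\me}_{00}(\Gamma)'$ enter --- shows the zero-extension is a weak solution across $\Gamma$. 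It vanishes on an open exterior set, so unique continuation (for smooth, not analytic, coefficients) gives $u\equiv 0$ near $\Gamma$ and then in all of $\Omega$. This zero-extension argument avoids justifying a reflection principle for a variable-coefficient operator, which your proposal leaves open.
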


        For the special case $L$ = $\Delta$, the Laplace operator, the 
assumptions relative to $\Omega$ can be weakened. For this operator 
Theorem~\ref{schwach_eindeut} still holds even if $\Omega$ is not supposed
to be simply connected. \bigskip \bigskip

\noindent {\Large\bf Acknowledgments}

\noindent
        The author wants to thank his PhD supervisor Prof. Dr. J. Baumeister 
for the inspiring discussions that contributed for the conclusion of this 
paper. The author was supported during his stay at the Goethe 
Universit\"at in Frankfurt am Main by the Deutsche Akademische Austauchdienst.

\end{document}